\PassOptionsToPackage{shortlabels}{enumitem}
\PassOptionsToPackage{table}{xcolor} 
\documentclass[11pt]{article}

\usepackage[compress]{natbib}
\usepackage{amsmath,amssymb,amsthm}
\usepackage{color} 
\usepackage[margin=1in]{geometry}
\usepackage{setspace}
\usepackage{amsthm, amsmath, amssymb, amsfonts}
\usepackage{bm}
\usepackage[dvipsnames,table,xcdraw]{xcolor}

\usepackage{lipsum}
\usepackage{appendix}
\usepackage{amsfonts}
\usepackage{graphicx}
\usepackage{epstopdf}
 \usepackage[ruled,vlined]{algorithm2e}   
\definecolor{colorSEG}{HTML}{FDEBD0}  
\definecolor{colorSOGDA}{HTML}{E1F5FE}
\usepackage{enumitem}   
\ifpdf
  \DeclareGraphicsExtensions{.eps,.pdf,.png,.jpg}
\else
  \DeclareGraphicsExtensions{.eps}
\fi

\allowdisplaybreaks[4]

\usepackage{amsopn}
\usepackage{mathtools}   
\usepackage{url}            
\usepackage{booktabs}       

\usepackage{mathrsfs} 
\usepackage{hyperref} 
\hypersetup{
    colorlinks=true,
    breaklinks=true,
    linkcolor=red!80!black,
    citecolor=blue!70!black,
    urlcolor=blue!70!black
}
\usepackage{cleveref}
\usepackage{multirow}

\usepackage{tabularx}
\usepackage{colortbl}
\usepackage{textcomp}

\usepackage{caption}
\usepackage{subcaption}
 
\usepackage{tcolorbox} 
\usepackage{xr}

\usepackage{caption}
\usepackage{url}            
\usepackage{booktabs}       
\usepackage{nicefrac}       
\usepackage{graphicx}
\usepackage{caption}
\usepackage{thm-restate}
\usepackage{bbm}
\usepackage[export]{adjustbox}

\usepackage{microtype}
\usepackage{mathpazo} 
\usepackage[scaled]{helvet} 
\usepackage{courier} 
\normalfont
\usepackage[T1]{fontenc}

\usepackage{tikz}
\usetikzlibrary{shadows}
\usetikzlibrary{arrows,positioning} 
\def\arraystretch{1.5}
\usepackage{tcolorbox}

\def\O#1{\text{\ding{\the\numexpr#1+171}}}

\definecolor{amethyst}{rgb}{0.6, 0.4, 0.8}
\newcount\Comments
\newcommand{\kibitz}[2]{\ifnum\Comments=1{\textcolor{#1}{\textsf{\footnotesize #2}}}\fi}

\usepackage{amsmath}
\usepackage{graphicx} 
\usepackage{latexsym}
\usepackage{mathrsfs}
\usepackage{mathtools}
\usepackage{appendix}
\usepackage{multirow}
\usepackage{makecell}
\usepackage{mathtools}
 \usepackage{amssymb}
\usepackage[misc]{ifsym}
\usepackage{bm}
\usepackage{harmony}
\usepackage{scalerel}
\usepackage{booktabs}
 \usepackage{color}
\usepackage{verbatim}
\usepackage{url}
\usepackage{hyperref}
\usepackage{graphicx,graphics}
\usepackage{multirow}
\usepackage[shortlabels]{enumitem} 
\usepackage{pgfplots}
\usepackage{threeparttable}
\allowdisplaybreaks[4] 
\usepackage[english]{babel} 
\usepackage{blindtext}
\usepackage{float}
\usepackage[export]{adjustbox}
 
\newcounter{myalg}
\renewcommand{\themyalg}{\arabic{myalg}}
\makeatletter
\newcommand{\setmyalglabel}[1]{%
  \protected@edef\@currentlabel{\themyalg}%
  \label{#1}%
}
\makeatother 
\newcounter{spb}
\def\bu{\bm{u}}

\def\x{\bm{x}}
\def\y{\bm{y}}
\def\z{\bm{z}}
\def\v{\bm{v}}
\def\u{\bm{u}}
\def\e{\bm{e}}

\def\p{\bm{p}}

\def\b1{\bm{1}}

\newcommand{\F}{\mathcal F}
\newcommand{\R}{\mathbb R}
\newcommand{\E}{\mathbb E}

\newcommand{\X}{\mathcal X}
\newcommand{\Y}{\mathcal Y}

\newcommand{\mO}{\mathcal O}

\newcommand{\U}{\mathcal U}

\DeclareMathOperator{\proj}{proj}

\DeclareMathOperator{\diam}{diam}
\DeclareMathOperator{\dist}{dist}

\DeclareMathOperator*{\argmin}{argmin}
\DeclareMathOperator*{\argmax}{argmax}

\usepackage{thmtools}
 
\declaretheoremstyle[parent=section]{definitionwithend}
\declaretheorem[style=definitionwithend]{corollary}
\declaretheorem[style=definitionwithend]{theorem}

\declaretheorem[style=definitionwithend]{definition}
\declaretheorem[style=definitionwithend]{assumption}

\declaretheorem[style=definitionwithend]{remark}

\declaretheorem[style=definitionwithend]{lemma}

\usepackage{appendix}
\usepackage{pifont}
\title{Last-Iterate Convergence of Single-Loop Stochastic Methods for Constrained Convex-Concave Minimax Problems} 

\date{\today}

\author{%
    Taoli Zheng\thanks{RIKEN AIP  \texttt{taoli.zheng@riken.jp}} \and
    Jiajin Li\thanks{Sauder School of Business, University of British Columbia, Vancouver, BC, Canada. \texttt{jiajin.li@sauder.ubc.ca}} \and
    Anthony Man-Cho So\thanks{Department of Systems Engineering and Engineering Management, The Chinese University of Hong Kong, Shatin, NT, Hong Kong. \texttt{manchoso@se.cuhk.edu.hk}} 
}
\begin{document}

\maketitle
 
\begin{abstract}
In this paper, we study last-iterate convergence of stochastic first-order methods for constrained smooth convex--concave minimax optimization under the standard bounded-variance stochastic oracle. A fundamental challenge is that the last iterates of vanilla stochastic extragradient (S-EG) and stochastic optimistic gradient descent--ascent (S-OGDA) may fail to converge in the presence of stochastic gradient noise, even for simple bilinear problems. To overcome this difficulty, we introduce a simple perturbation framework that regularizes the original convex--concave problem into a strongly convex--strongly concave one. Applying S-EG and S-OGDA to the perturbed problem yields two simple single-loop methods, referred to as perturbed S-EG (PS-EG) and perturbed S-OGDA (PS-OGDA).

We establish last-iterate convergence by first deriving convergence in terms of the squared distance to the saddle point of the perturbed problem and then translating this estimate into guarantees for the restricted primal--dual gap. Based on this framework, we establish two types of convergence guarantees. When the optimization horizon is known \emph{a priori}, both PS-EG and PS-OGDA achieve an $\mathcal{O}(T^{-1/4})$ last-iterate convergence rate for the restricted primal--dual gap, which coincides with the standard primal--dual gap on compact feasible domains. When the optimization horizon is unknown, we develop an anytime variant based on diminishing perturbations and diminishing stepsizes. For general closed convex feasible sets, both PS-EG and PS-OGDA achieve an $\mathcal{O}(T^{-1/5})$ last-iterate convergence rate for the restricted primal--dual gap. Furthermore, in the unconstrained setting, PS-EG admits a sharper $\mathcal{O}(T^{-1/4})$ anytime convergence rate in terms of the gradient norm.
\end{abstract}
 
\section{Introduction}\label{sec:intro}

In this paper, we consider the stochastic convex--concave minimax problem
\begin{equation}
\min_{\x\in\X}\max_{\y\in\Y}
F(\x,\y)
\coloneqq 
\mathbb{E}_{\bm\zeta}[f(\x,\y;\bm\zeta)],
\tag{P}
\label{eq:prob}
\end{equation}
where $f(\cdot,\cdot;\bm\zeta):\mathbb{R}^n\times\mathbb{R}^d\rightarrow\mathbb{R}$ is smooth and convex--concave for almost every realization of $\bm\zeta$, and $\X\subseteq\mathbb{R}^n$, $\Y\subseteq\mathbb{R}^d$ are closed convex sets. We assume access only to unbiased stochastic first-order oracles with bounded variance.

While the ergodic convergence of stochastic first-order methods is by now well understood, understanding their \emph{last-iterate} behavior remains considerably more challenging. This distinction is practically important since optimization algorithms  always return the final iterate rather than an average of the trajectory. Consequently, a central question in stochastic minimax optimization is whether simple first-order methods can achieve non-asymptotic last-iterate convergence guarantees under the standard stochastic oracle model.

Among first-order methods, extragradient (EG)~\citep{korpelevich1976extragradient} and optimistic gradient descent--ascent (OGDA)~\citep{popov1980modification} are arguably the two most widely used algorithms for solving convex--concave minimax problems. In deterministic smooth settings, their last-iterate behavior is now well understood: Both algorithms achieve optimal convergence rates in terms of the primal--dual gap, matching known lower bounds~\citep{nemirovski2004prox,mokhtari2020convergence,golowich2020last,cai2022tight,xie2020lower}. In stochastic settings, however, the situation is
fundamentally different. Although averaged iterates achieve the classical $\mathcal{O}(T^{-1/2})$ ergodic convergence rate under unbiased stochastic first-order oracles with bounded variance~\citep{juditsky2011solving,mishchenko2020revisiting,gidelvariational}, these guarantees do not imply convergence of the last iterate. The reason is that the stochastic noise may destabilize the dynamics. Even for simple bilinear games, the last iterates generated by vanilla stochastic EG (S-EG) and stochastic OGDA (S-OGDA) may fail to converge~\citep{chavdarova2019reducing,ryu2019ode,hsieh2020explore}; see Figure~\ref{fig:trajectory} for details. Therefore, obtaining last-iterate convergence requires additional mechanisms to stabilize the stochastic dynamics.

\begin{figure}[ht]
    \centering
    \begin{minipage}{0.45\textwidth}
        \centering
        \includegraphics[width=\linewidth]{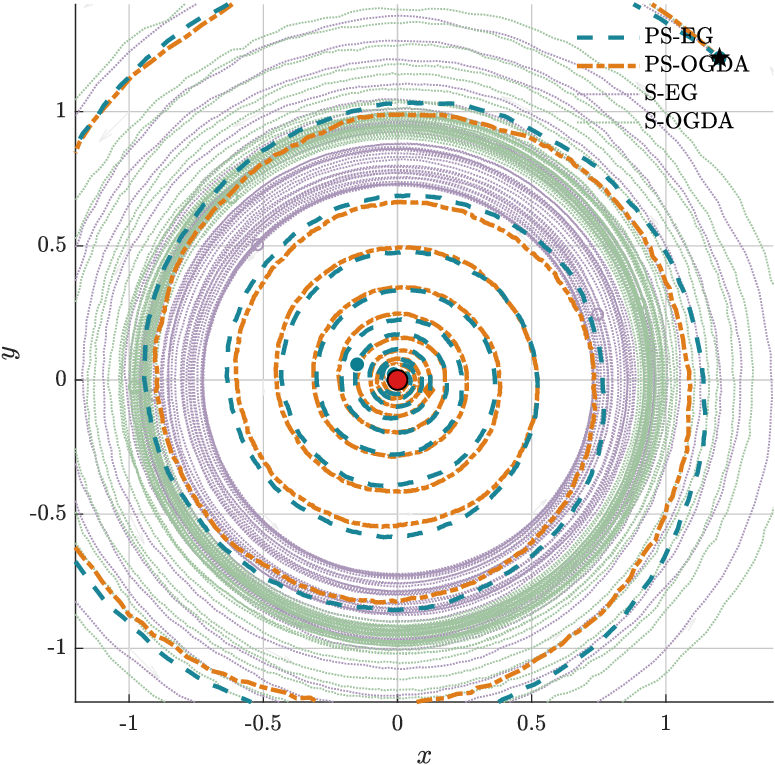}
        \subcaption{Comparison of vanilla and perturbed S-EG/S-OGDA on a stochastic bilinear problem. Vanilla methods fail to converge, whereas the perturbed variants converge.}
        \label{fig:trajectory}
    \end{minipage}
    \hfill
    \begin{minipage}{0.45\textwidth}
        \centering
        \includegraphics[width=\linewidth]{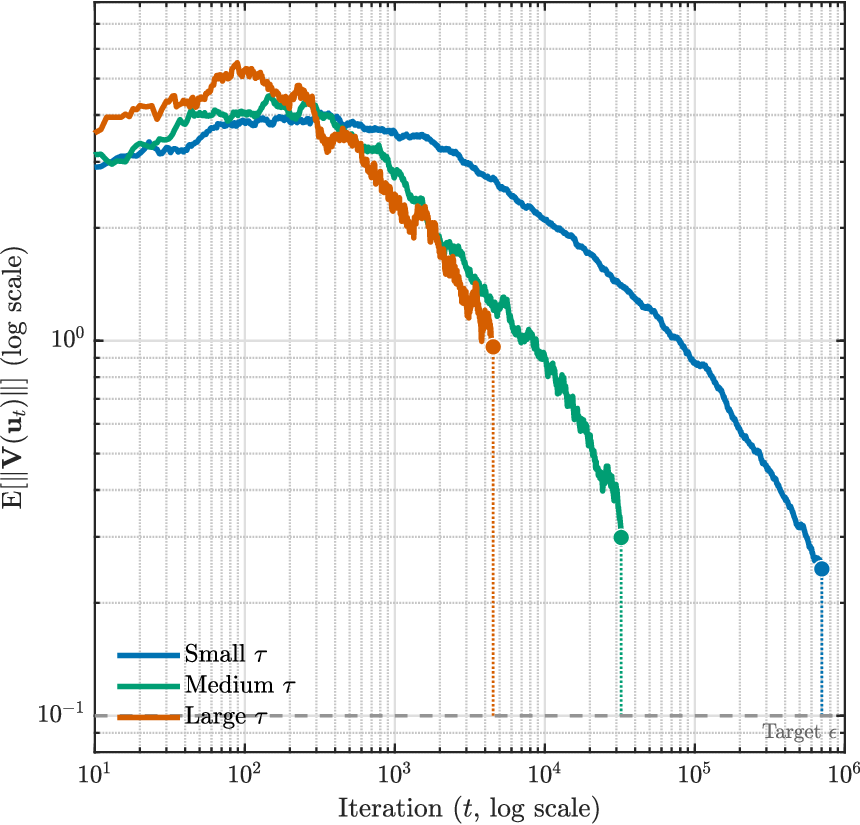}
        \subcaption{Influence of the perturbation parameter $\tau$. Larger $\tau$ accelerates convergence but introduces a larger asymptotic bias.}
        \label{fig:convergence}
    \end{minipage}
    \vspace{-2mm}
\end{figure}


Existing approaches can be broadly grouped into two families. The first family reduces the effect of stochastic noise at the oracle or estimator level, often by using additional samples or stronger sampling structure. Representative examples include mini-batch methods, which query multiple stochastic gradients per iteration, and variance-reduced methods, which construct low-variance gradient estimators by exploiting finite-sum structure or recursive stochastic estimation~\citep{lee2021fast,cai2022stochastic}.

The second family stabilizes the stochastic dynamics at the algorithmic level while retaining the standard stochastic oracle. Representative examples include anchoring techniques and adaptive perturbation schemes. For constrained stochastic minimax problems with compact sets, ~\cite{abe2024adaptively} proposed adaptively perturbed mirror descent and established a $\widetilde{\mathcal{O}}(T^{-1/10})$ last-iterate convergence rate. Subsequent anchoring-based methods improved this rate to $\widetilde{\mathcal{O}}(T^{-1/7})$~\citep{abe2025boosting}. Under stronger assumptions, including Hessian Lipschitz continuity, generalized Frank--Wolfe methods further achieve an $\mathcal{O}(T^{-1/6})$ rate~\citep{chen2024last}. Nevertheless, under the standard bounded-variance oracle and smoothness assumptions, whether one can obtain faster last-iterate convergence using simple stochastic EG/OGDA-type methods remains open.


In this paper, we adopt a perturbation-based stabilization strategy. Specifically, we add a small quadratic perturbation that renders the objective strongly convex in the primal variable and strongly concave in the dual variable, and then directly apply the standard stochastic EG or stochastic OGDA method to the perturbed problem. The resulting perturbed stochastic EG (PS-EG) and perturbed stochastic OGDA (PS-OGDA) preserve the simplicity of the original algorithms, require no additional oracle model beyond vanilla S-EG/S-OGDA, and introduce neither variance reduction, nor increasing batch sizes.

The central challenge is to determine an appropriate perturbation level. A large perturbation induces strong contraction but also introduces significant approximation bias, whereas a small perturbation better approximates the original problem but provides insufficient stabilization against stochastic noise; see Figure~\ref{fig:convergence}. This trade-off motivates the development of our perturbation framework, whose main contributions are summarized below.
  
\begin{itemize}
\item We introduce a perturbation-based framework that stabilizes stochastic EG and stochastic OGDA under the standard unbiased bounded-variance stochastic oracle. The framework preserves the simplicity of the original algorithms, requires no additional oracle assumptions or variance-reduction techniques, and applies to both constrained and unconstrained convex--concave minimax problems.

\item We develop a unified analysis that first establishes last-iterate convergence in terms of the squared distance to the saddle point of the perturbed problem, and then converts this distance estimate into convergence guarantees for the restricted primal--dual gap and, in the unconstrained setting, the gradient norm.  

\item In the horizon-dependent setting, where the optimization horizon $T$ is known \emph{a priori}, we establish an $\mathcal{O}(T^{-1/4})$ last-iterate convergence rate for the restricted primal--dual gap and, in the unconstrained setting, the gradient norm. For compact feasible domains, the restricted gap coincides with the standard primal–dual gap, and in this case the rate improves upon the previous best $\widetilde{\mathcal{O}}(T^{-1/7})$ guarantee obtained under comparable assumptions, as detailed below in Table~\ref{tab:comparison}.

\item In the anytime setting, where the optimization horizon is unknown \emph{a priori}, we develop a diminishing-perturbation strategy under the standard bounded-variance stochastic oracle. For general closed convex feasible sets, we establish an $\mathcal{O}(T^{-1/5})$ last-iterate convergence rate for the restricted primal--dual gap for both PS-EG and PS-OGDA. Furthermore, in the unconstrained setting, we show that PS-EG admits a sharper $\mathcal{O}(T^{-1/4})$ last-iterate convergence rate in terms of the gradient norm.

\end{itemize}
 
\begin{table}[t]
\centering
\footnotesize
\renewcommand{\arraystretch}{0.92}

\begin{tabularx}{\linewidth}{
>{\raggedright\arraybackslash}p{0.14\linewidth}
>{\centering\arraybackslash}p{0.12\linewidth}
>{\centering\arraybackslash}p{0.13\linewidth}
>{\centering\arraybackslash}p{0.12\linewidth}
>{\centering\arraybackslash}p{0.11\linewidth}
X}
\toprule
Measure &
Method &
Rate &
Requires $T$? &
Domain &
Extra assumptions \\
\midrule

Gradient norm
&
\cite{cai2022stochastic}
&
$\mO(T^{-1/3})$
&
Yes
&
Unconstr.
&
Stronger oracle
\\
\midrule
Primal--dual gap
&
\cite{abe2025boosting}
&
$\widetilde \mO(T^{-1/7})$
&
Yes
&
Constr.
&
Bounded domain
\\
\midrule
Primal--dual gap
&
\cite{chen2024last}
&
$\mO(T^{-1/6})$
&
Yes
&
Constr.
&
Lipschitz Hessian \newline
+ bounded domain
 
 
\\
\midrule
 Gap /Gradient norm 
&
\textbf{PS-EG, PS-OGDA}
&
\textbf{$\mO(T^{-1/4})$}
\newline 
(Theorem~\ref{thm:rate_L2}, Corollary~\ref{coro:uncon})
& Yes 
&
\textbf{Both}
&
\textbf{---}
\\
\midrule
Gradient norm
&
\textbf{PS-EG}
&
\textbf{$\mO(T^{-1/4})$}
\newline
(Corollary~\ref{thm:rate_any_uncon_eg})
&
\multirow{2}{*}{\textbf{No}}
&
\textbf{Unconstr.}
&
\multirow{2}{*}{\textbf{---}}
\\
\cmidrule(lr){1-3}\cmidrule(lr){5-5}
Primal--dual gap
&
\textbf{PS-EG, PS-OGDA}
&
\textbf{$\mO(T^{-1/5})$}
\newline
(Theorem~\ref{thm:rate_any})
&
&
\textbf{Constr.}
&
\\

\bottomrule
\end{tabularx}

\caption{Comparison of last-iterate convergence guarantees for stochastic smooth convex–concave minimax optimization. For our methods, “Gap” denotes the restricted primal–dual gap $\mathcal{G}^R$ over compact comparison sets; it coincides with the standard primal–dual gap only in the compact-domain case with $\mathcal{B}_{\x}=\X$ and $\mathcal{B}_{\y}=\Y$. }
\label{tab:comparison}
\end{table}

\paragraph{Notation.} 
Throughout this paper, we use bold lowercase letters (e.g., $\x$, $\y$, $\z$) to denote vectors,
and calligraphic uppercase letters (e.g., $\X$, $\Y$) to denote sets.
For a set $\X$, we define its diameter as $
\diam(\X) \coloneqq  \sup_{\x,\y \in \X} \|\x - \y\|$. Given two sets $\X \subset \R^n$ and $\Y \subset \R^d$,
we denote their Cartesian product by $\X \times \Y \coloneqq  \{(\x,\y) : \x \in \X,\ \y \in \Y\}$. 
We use $\proj_{\X}(\z)$ to denote the orthogonal projection of a point $\z$ onto a closed convex set $\X$, defined as $\proj_{\X}(\z) \coloneqq \argmin_{\x \in \X} \|\x - \z\|$. 
Given a differentiable function $f:\R^n \times \R^d \to \R$,
we use $\nabla_{\x} f$ (resp., $\nabla_{\y} f$) to denote its partial gradient
with respect to the first (resp., second) argument. Finally, for two nonnegative quantities $a$ and $b$, the notation $a = \mO(b)$ implies that there exists a constant $C > 0$ such that $a \le Cb$, and $a = \Theta(b)$ implies that there exist constants $C_1, C_2 > 0$ such that $C_1 b \le a \le C_2 b$. We use $\tilde{\mO}(\cdot)$ to hide logarithmic factors. For a real number $a$, we use $\lceil a \rceil$ to denote the smallest integer greater than or equal to $a$. We let $\mathcal{N}_+ \coloneqq \{1,2,3,\ldots\}$ denote the set of positive integers.

\paragraph{Organization} The remainder of this paper is organized as follows. 
Section~\ref{sec:2} introduces the necessary preliminaries, including the assumptions and the notion of a stationary measure. 
In Section~\ref{sec:algorithms}, we present our proposed algorithms, PS-EG and PS-OGDA.  
Section~\ref{sec:main} presents the main last-iterate convergence guarantees for both the horizon-dependent and the anytime settings.
Section~\ref{sec:proof} develops the key ingredients of the analysis and proves the main results. Finally, Section~\ref{sec:6} concludes the paper.
 
\section{Preliminaries}\label{sec:2}
\subsection{Problem Setup}\label{sec:2_1}

In this section, we formalize the stochastic convex--concave minimax problem and delineate the structural assumptions required for our analysis. We consider the problem~\eqref{eq:prob} under the following standard conditions on the objective function $F$.

\begin{assumption}[Convex-concavity]\label{ass:cc}
The function $F(\x, \y)$ is convex in $\x$ for every fixed $\y \in \Y$, and concave in $\y$ for every fixed $\x \in \X$.
\end{assumption}

\begin{assumption}[Smoothness]\label{ass:l_smooth}
The function $F$ is continuously differentiable, and its partial gradients $\nabla_{\x} F$ and $\nabla_{\y} F$ are $L$-Lipschitz continuous. That is, for  all $(\x, \y), (\x', \y') \in \X \times \Y$, we have 
\begin{align*}
    \|\nabla_{\x} F(\x, \y) - \nabla_{\x} F(\x', \y')\| &\leq L (\|\x-\x'\| + \|\y-\y'\|), \\
    \|\nabla_{\y} F(\x, \y) - \nabla_{\y} F(\x', \y')\| &\leq L (\|\x-\x'\| + \|\y-\y'\|).
\end{align*}
\end{assumption}
For convex--concave minimax problems, it is mathematically advantageous to  leverage the variational inequality (VI) framework. Let $\u = [\x; \y] \in \R^{n+d}$ denote the concatenated variable and $\U = \X \times \Y$ the constraint set. We define the associated operator $V: \R^{n+d} \to \R^{n+d}$ as:
\begin{align*}
    V(\u) \coloneqq [ \nabla_{\x} F(\x, \y);\ -\nabla_{\y} F(\x, \y)].
\end{align*}
Under Assumptions~\ref{ass:cc} and~\ref{ass:l_smooth}, we know that $V$ is monotone and Lipschitz continuous~\citep{facchinei2003finite}. The problem of finding a saddle point of~\eqref{eq:prob} is then equivalent to solving the following VI:
\begin{equation}
    \text{Find } \u^\star \in \U \text{ such that } \langle V(\u^\star), \u - \u^\star \rangle \geq 0, \quad \forall \u \in \U. \label{eq:VI}
\end{equation}
In the unconstrained setting, where $\X = \R^n$ and $\Y = \R^d$, the VI problem reduces to solving  
\begin{equation*}
\text{Find } \u^\star \in \R^{n+d} \text{ such that }  V(\u^\star)=0.  
\end{equation*}
We refer the reader to~\citep{facchinei2003finite} for a comprehensive introduction to variational inequalities.    
To guarantee the tractability of the VI formulation and the well-posedness of subsequent analysis, we assume throughout that the solution set $\U^\star$ is non-empty.

\begin{assumption}[Existence of solution]\label{ass:exist}
The set of solutions to~\eqref{eq:VI},
\[
\U^\star = \{\u^\star\in\U:\ \langle V(\u^\star), \u-\u^\star\rangle \geq 0, \forall \u\in \U\},
\]
is nonempty. 
\end{assumption} 
 Throughout the analysis, we fix an arbitrary $\u^\star\in \U^\star$.
In many practical situations, the operator $V$ cannot be evaluated exactly, and we instead have access to unbiased stochastic estimates via a stochastic first-order oracle (SFO). Specifically, at each iteration $t$, we obtain a feedback of the form $\hat{V}_t = V(\u_t) + \bm{\zeta_t}$, where $\bm{\zeta_t} \in \R^{n+d}$ denotes an additive noise vector. We formalize this setup with the following assumption:

\begin{assumption}[Bounded variance]\label{ass:stoc}
The noise sequence $\{\bm{\zeta_t}\}_{t\ge 0}$ generated by the SFO satisfies
\begin{enumerate}[label=(\roman*)] 
    \item \textit{Zero mean:} $\E[\bm{\zeta_t} | \F_t] = 0$,
    \item \textit{Variance bound:} $\E[\|\bm{\zeta_t}\|^2 |\F_t] \leq \sigma^2$,
\end{enumerate}
for some $\sigma \geq 0$, where $\F_t$ denotes the natural filtration representing the history up to iteration $t$. The same conditions also hold for stochastic gradients evaluated at intermediate iterates, with the corresponding conditional expectations taken with respect to the appropriate filtration.
\end{assumption}
 
Assumption~\ref{ass:stoc} is standard in the stochastic minimax literature~\citep{gidelvariational, hsieh2019convergence}, where it is widely used to establish ergodic convergence guarantees~\citep{juditsky2011solving, mishchenko2020revisiting}. We note, however, that our analysis naturally extends to the generalized variance model~\citep{hsieh2020explore}, which allows the noise variance to grow quadratically with the distance to the solution set; i.e., $\E[\|\bm{\zeta_t}\|^2| \F_t] \leq \left(\sigma+\kappa \dist(\u_t,\U^\star)\right)^2$  and some $\kappa\geq 0$.  While this relaxation offers a more realistic alternative for settings with unbounded domains or heavy-tailed noise,  for clarity of exposition, we focus primarily on the simpler bounded variance case throughout this work. 
  
\subsection{Stationarity Measure}\label{sec:stat}
To evaluate the quality of a candidate solution, we next introduce the stationary measure. A standard notion for evaluating the optimality of a solution in minimax optimization is the primal-dual gap, defined as 
\[
 \mathcal{G}(\x,\y)=\max_{\y' \in \Y} F(\x,\y')-\min_{\x' \in \X} F(\x',\y).
\]
However, in the unconstrained setting where $\X=\R^n$ and $\Y=\R^d$, the primal-dual gap may be unbounded for any point that is not the exact optimal solution $\u^\star$. To circumvent this issue, we evaluate the gap over two suitably chosen compact subsets that contain at least one saddle point $\u^\star$, yielding the notion of \textit{Restricted primal-dual gap}.    
\begin{definition}[Restricted primal-dual gap]\label{def:standard_gap}
Let $\mathcal{B}_{\x} \subseteq \X$ and $\mathcal{B}_{\y} \subseteq \Y$ be compact sets such that they contain at least one saddle point $(\x^\star,\y^\star)$ of Problem~\eqref{eq:prob}. For any pair $(\x,\y) \in \X \times \Y$, the restricted primal-dual gap with respect to $\mathcal{B}_{\x} \times \mathcal{B}_{\y}$ is defined as:
\begin{align*}
     \mathcal{G}^R(\x,\y) \coloneqq \max_{\y' \in \mathcal{B}_{\y}} F(\x,\y') - \min_{\x' \in \mathcal{B}_{\x}} F(\x',\y).
\end{align*}
Furthermore, we say that $(\x,\y)$ is an  \emph{$\epsilon$-restricted saddle point} ($\epsilon$-RSP) with respect to
$\mathcal B_x\times\mathcal B_y$ if $\mathcal{G}^R(\x,\y)\le\epsilon$.
\end{definition}

For unconstrained problems, another widely used stationarity measure is the norm of the gradient operator $\|V(\u)\|$.
\begin{definition}\label{def:stat}
     For any pair $\u=(\x,\y) \in \X \times \Y$, we call $\u$ an \emph{$\epsilon$-stationary point} if
\[
\|V(\u)\|\le\epsilon.
\] 
\end{definition}  

\begin{remark}
The restricted primal--dual gap provides a unified stationarity measure for both constrained and unconstrained minimax problems. In particular, when the feasible sets $\X$ and $\Y$ are compact, choosing $\mathcal{B}_{\x}=\X$ and $\mathcal{B}_{\y}=\Y$ yields
\[
\mathcal{G}^R(\x,\y)=\mathcal{G}(\x,\y),
\]
so the restricted primal--dual gap coincides with the standard primal--dual gap.
\end{remark}
 
\section{Perturbation-Based Stochastic EG and OGDA}
\label{sec:algorithms}

Variational inequality (VI) formulations provide a natural framework for solving~\eqref{eq:prob}. Among first-order methods, the extragradient (EG) method~\citep{korpelevich1976extragradient} and optimistic gradient descent--ascent (OGDA)~\citep{mertikopoulos2019optimistic,mokhtari2020convergence} are particularly prominent. However, in stochastic settings, their last-iterate behavior is fundamentally problematic: even for simple bilinear problems, the iterates of vanilla stochastic EG (S-EG) and stochastic OGDA (S-OGDA) may diverge~\citep{chavdarova2019reducing,ryu2019ode,hsieh2020explore}.

To overcome these non-convergence issues, we introduce a simple perturbation scheme that induces strong monotonicity. Such perturbation or regularization techniques are also used in first-order methods to improve stability and enable last-iterate convergence for monotone problems (see, e.g.,~\citep{nesterov2013gradient,abe2024adaptively}). 
Specifically, for a parameter $\tau > 0$, we consider the quadratically perturbed objective
\begin{equation} \label{eq:prob_per}
  F_{\tau}(\x, \y) \coloneqq  F(\x,\y) + \frac{\tau}{2}\|\x\|^2 - \frac{\tau}{2}\|\y\|^2.
\end{equation}
By construction, $F_{\tau}$ is $\tau$-strongly convex in $\x$ and $\tau$-strongly concave in $\y$. When $\tau$ is chosen at an appropriate scale, the perturbed problem serves as a controlled approximation of the original objective, enabling convergence guarantees in terms of the primal–dual gap of the original problem.

Armed with the perturbed objective, we are ready to introduce our algorithm.
To this end, we consider the gradient mapping associated with the perturbed objective:
\[
W(\u) \coloneqq V(\u) + \tau \u,
\]
which satisfies $\tau$-strong monotonicity and is also Lipschitz continuous. 
For a time-varying perturbation sequence $\{\tau_t\}_{t\geq0}$, we associate the perturbation parameter with the iteration at which the stochastic oracle is queried. In particular, we use the convention
\[
\tau_{t+\frac{1}{2}} \coloneqq \tau_t, \qquad \forall t\geq 0.
\]
Applying stochastic extragradient (EG) and optimistic gradient descent--ascent (OGDA) to the perturbed operator yields the perturbed stochastic extragradient (PS-EG) and perturbed stochastic OGDA (PS-OGDA) methods, summarized in Algorithm~\ref{alg:perturbed_methods}.
In practice, the stochastic estimate $\hat{W}_t$ is constructed as $\hat{V}_t + \tau_t \u_t$, 
and similarly for the intermediate iterate. Hence no additional stochastic oracle calls are required beyond those used to return $\hat{V}_t$.

\begin{algorithm}[H] 
\makeatletter
\makeatother
 
\caption{Perturbed Stochastic Extragradient (PS-EG) and Optimistic GDA (PS-OGDA)}
\label{alg:perturbed_methods}

\KwIn{Initialization $\bu_0$ (and $\bu_{-\frac{1}{2}}$ for OGDA), learning rates $\eta_t$, perturbation parameters $\{\tau_t\}_{t=0}^T$}
\For{$t = 0, 1, \dots, T$}{
    \tcc{Access stochastic gradient estimates via SFO: $\hat{W}_t \coloneqq \hat{V}_t + \tau_t \bu_t$}
    
    \BlankLine
    \begingroup
    \everymath{\displaystyle}
    \rowcolors{1}{colorSEG}{colorSEG}
    \begin{tabular}{p{0.9\linewidth}}
    $\bu_{t+\frac{1}{2}} = \text{proj}_{\mathcal{U}}(\bu_t - \eta_t \hat{W}_t)$ \\
    $\bu_{t+1} = \text{proj}_{\mathcal{U}}(\bu_t - \eta_t \hat{W}_{t+\frac{1}{2}})$ \hfill \# PS-EG
    \end{tabular}
    \endgroup

    \BlankLine
    \begingroup
    \everymath{\displaystyle}
    \rowcolors{1}{colorSOGDA}{colorSOGDA}
    \begin{tabular}{p{0.9\linewidth}}
    $\bu_{t+\frac{1}{2}} = \text{proj}_{\mathcal{U}} (\bu_{t}-\eta_{t} \hat{W}_{t-\frac{1}{2}})$ \\
    $\bu_{t+1} = \text{proj}_{\mathcal{U}} (\bu_{t}-\eta_t \hat{W}_{t+\frac{1}{2}})$ \hfill \# PS-OGDA
    \end{tabular}
    \endgroup
}
\end{algorithm}

\begin{remark}[Connection to anchored methods]  
We briefly discuss the relation between the proposed perturbation approach and anchored (or regularized) methods~\citep{yoon2021accelerated, lee2021fast, cai2022accelerated, abe2025boosting, surina2026improved, cai2026last}. Anchored methods stabilize the dynamics by explicitly modifying the update rule, typically via an additional term that pulls the iterate toward a reference point (e.g., the initialization)~\citep{ryu2019ode, yoon2021accelerated}. A generic form of such updates is
\[ 
\u_{t+\frac{1}{2}} = \proj_{\mathcal{U}}\left( \u_t - \eta_t \hat{V}_t + \beta_t (\u_{\mathrm{ref}} - \u_t)\right), \quad 
\u_{t+1}  =  \proj_{\mathcal{U}}\left(\u_t - \eta_t \hat{V}_{t+\frac{1}{2}} + \beta_t (\u_{\mathrm{ref}} - \u_t)\right),
\]
where $\beta_t \ge 0$ is the anchoring coefficient, and $\beta_t = 0$ recovers S-EG.

At first sight, our PS-EG method may resemble an anchored update with $\u_{\rm ref}=0$. 
However, the two methods remain different. 
In anchored methods, the additional stabilization term typically acts on the current iterate $\u_t$, yielding a drift term of the form $-\beta_t \u_t$. 
By contrast, in the second step of PS-EG, the shrinkage term acts on the extrapolated point $\u_{t+\frac12}$ and arises implicitly from evaluating EG on the perturbed operator $W$.
Therefore, even when $\u_{\rm ref}=0$, PS-EG is not equivalent to an anchored method. 
The former preserves the exact EG structure applied to a modified problem, whereas the latter directly alters the update dynamics.

Despite recent progress, the theoretical understanding of anchored methods remains incomplete. In particular, their convergence behavior in constrained settings is largely unexplored. Existing variants that combine anchoring with variance reduction (e.g., STORM-type estimators) achieve ergodic convergence rates of $\mO(T^{-1/4})$~\citep{alacaoglu2025towards,alacaoglu2026solving}, while last-iterate guarantees for anchoring-based methods---especially under constraints---remains less understood. To our knowledge, the only work addressing this is~\citep{abe2025boosting}, which establishes a last-iterate rate of $\tilde{\mO}(T^{-1/7})$. Whether improved rates can be obtained for such methods remains an open question.
\end{remark}
 
\section{Main Results}
\label{sec:main}

We now present the main theoretical guarantees of the proposed perturbed stochastic methods. We consider two parameterization regimes. In Subsection~\ref{sec:T_rate}, we assume that the optimization horizon $T$ is known \emph{a priori}. This allows the perturbation parameter and the stepsizes to be selected according to the prescribed horizon $T$, yielding our best non-asymptotic last-iterate convergence rates. In Subsection~\ref{sec:any_rate}, we remove this assumption by employing diminishing perturbation and stepsize schedules, resulting in an anytime algorithm that does not require prior knowledge of the optimization horizon.
 
Before stating the main results, we introduce the compact sets used in the definition of the restricted primal--dual gap throughout the remainder of the paper. Their validity is justified later by establishing uniform mean-square boundedness of the iterates. Let
\[
C
\coloneqq
\sup_{t\ge0}
\mathbb E\!\left[\|\u_t-\u^\star\|^2\right],
\]
whose finiteness is established in Lemma~\ref{lem:bd_iter}. Accordingly, we define
\[
\mathcal B_{\x}
\coloneqq
\X\cap
\mathcal B(\x^\star,\sqrt C),
\qquad
\mathcal B_{\y}
\coloneqq
\Y\cap
\mathcal B(\y^\star,\sqrt C),
\]
and let $\mathcal G^R$ denote the restricted primal--dual gap over
$\mathcal B_{\x}\times\mathcal B_{\y}$.

In particular, if the feasible sets $\X$ and $\Y$ are compact, then
\[
\mathcal B_{\x}=\X,
\qquad
\mathcal B_{\y}=\Y,
\]
and consequently $\mathcal G^R$ coincides with the standard primal--dual gap.

\subsection{Horizon-Dependent Last-Iterate Convergence}
\label{sec:T_rate} 
Our first theorem establishes the last-iterate convergence rate when the optimization horizon is known in advance.
\begin{theorem}\label{thm:rate_L2}
Suppose Assumptions~\ref{ass:cc}, \ref{ass:l_smooth},
\ref{ass:exist}, and~\ref{ass:stoc} hold.
Let $T\ge1$. For either choice of the perturbation parameter and
stepsize sequence specified below, assume that $\tau_t\equiv \tau$ and $\sup_{t\ge0}\eta_t
\le
\min\left\{
\tfrac{1}{\sqrt{48}(L+\tau)},
\tfrac{1}{4\tau}
\right\}.$ 
Suppose further that $\sum_{t=0}^\infty \eta_t^2<\infty$, then the following last-iterate guarantees hold for both PS-EG and PS-OGDA.

\begin{enumerate}[label=(\roman*)] 

\item \textbf{Polynomial stepsizes.}
Suppose that $\tau=T^{-\delta}, 
\eta_t= (t+b)^{-\alpha},$ 
where $\delta>0$, $b\ge1$, and
$\alpha\in(\frac12,\,1-\delta)$.
Then, for all $T\ge b+1$,
\[
\mathbb{E} \left[\mathcal{G}^R(\mathbf{u}_T)\right]
\le
\mathcal{O} \left(
T^{-(\alpha-\delta)/2}
+
T^{-\delta}
\right).
\]
In particular, for any $0<\varepsilon<\frac{1}{12}$, choosing $\alpha=\frac34-3\varepsilon,
\delta=\frac14-\varepsilon$ 
yields
\[
\mathbb{E} \left[\mathcal{G}^R(\mathbf{u}_T)\right]
=
\mathcal{O} \left(T^{-1/4+\varepsilon}\right).
\]

\item \textbf{Regularization-aware harmonic stepsizes.}
Suppose that $\tau=T^{-1/4}, 
\eta_t=\tfrac{2}{\tau(t+b)},$ 
where $b=
\left\lceil
c_0
\max\left\{
\tfrac{2(L+\tau)}{\tau},
\tau^{-2}
\right\}
\right\rceil,
c_0>8$. 
Then
\[
\mathbb{E} \left[\mathcal{G}^R(\mathbf{u}_T)\right]
\le
\mathcal{O} \left(T^{-1/4}\right).
\]

\end{enumerate}
\end{theorem}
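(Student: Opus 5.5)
The proof has three steps. First we derive a one-step contraction for the perturbed problem. Then we unroll this recursion to bound the distance to the perturbed saddle point. Finally we convert that distance into a bound on the restricted gap of the original problem.

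\textbf{Step 1: one-step contraction.} Let $\u_\tau^\star$ be the unique solution of the VI with the $\tau$-strongly monotone, $(L+\tau)$-Lipschitz operator $W$; it exists by strong monotonicity. We use the standard regularization-path estimate
\[
\|\u_\tau^\star-\u^\star\|\le\|\u^\star\|,
\]
obtained by adding the two VIs and using monotonicity of $V$. Writing $\hat W_{t+\frac12}=W(\u_{t+\frac12})+\bm\zeta_{t+\frac12}$, the usual EG analysis goes as follows. Apply the projection inequality twice, use strong monotonicity at $\u_{t+\frac12}$, and use Lipschitzness to cancel $\|\u_{t+\frac12}-\u_t\|^2$ terms. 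Here the constraint $\eta_t\le 1/(\sqrt{48}(L+\tau))$ absorbs the cross terms, including those coming from noise. Taking conditional expectations and using the zero mean of $\bm\zeta_{t+\frac12}$ gives
\[
\E\|\u_{t+1}-\u_\tau^\star\|^2\le (1-\tau\eta_t)\,\E\|\u_t-\u_\tau^\star\|^2 + c\,\eta_t^2\sigma^2 .
\]
The condition $\eta_t\le 1/(4\tau)$ is used to pass from $\u_{t+\frac12}$ back to $\u_t$ in the strong-monotonicity term. For PS-OGDA we run the same argument on the potential $\|\u_t-\u_\tau^\star\|^2+c'\eta_{t}^2\|W(\u_t)-W(\u_{t-\frac12})\|^2$, with the usual optimistic bookkeeping.

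\textbf{Step 2: unrolling the recursion.} \emph{Case (ii).} With $\eta_t=2/(\tau(t+b))$ we have $1-\tau\eta_t=1-2/(t+b)$. A standard Chung-type induction then gives
\[
\E\|\u_T-\u_\tau^\star\|^2\lesssim \frac{b^2 D_0}{(T+b)^2}+\frac{\sigma^2}{\tau^2(T+b)}.
\]
The choice $b\gtrsim\max\{(L+\tau)/\tau,\tau^{-2}\}$ ensures that the stepsize conditions hold. With $\tau=T^{-1/4}$ we get $b\sim T^{1/2}$, so the first term is $O(T^{-1})$ and the second is $O(T^{-1/2})$.

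\emph{Case (i).} With $\eta_t=(t+b)^{-\alpha}$ we obtain
\[
\prod_{s<T}(1-\tau\eta_s)\le\exp\bigl(-c\,\tau T^{1-\alpha}\bigr)=\exp\bigl(-cT^{1-\alpha-\delta}\bigr),
\]
which decays super-polynomially because $\alpha<1-\delta$. The noise sum is controlled by the standard lemma for $\sum_s\eta_s^2\prod_{r>s}(1-\tau\eta_r)$, which gives $\lesssim\eta_T/\tau=T^{-(\alpha-\delta)}$. In both cases, summability of $\eta_t^2$ and this recursion also yield uniform boundedness of $\E\|\u_t-\u^\star\|^2$, which is Lemma~\ref{lem:bd_iter}. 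This validates $\mathcal B_{\x},\mathcal B_{\y}$ with radius $\sqrt C$.

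\textbf{Step 3: from distance to restricted gap.} For $\u'=(\x',\y')\in\mathcal B_{\x}\times\mathcal B_{\y}$, convexity–concavity gives
\[
F(\x_T,\y')-F(\x',\y_T)\le\langle V(\u_T),\u_T-\u'\rangle .
\]
We split this as
\[
\langle V(\u_T)-V(\u_\tau^\star),\u_T-\u'\rangle+\langle W(\u_\tau^\star),\u_\tau^\star-\u'\rangle+\langle W(\u_\tau^\star),\u_T-\u_\tau^\star\rangle+\tau\langle \u_\tau^\star,\u'-\u_T\rangle .
\]
The second term is $\le0$ by the VI for $\u_\tau^\star$. The first and third terms are bounded by $O\bigl((L+\|W(\u_\tau^\star)\|)\,\|\u_T-\u_\tau^\star\|\,(\|\u_T\|+\sqrt C+\|\u^\star\|)\bigr)$. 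The last term is $O(\tau)$, since $\|\u_\tau^\star\|\le2\|\u^\star\|$ and $\u',\u_T$ are bounded in mean square.

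\textbf{Main obstacle.} The hardest part is that $\|W(\u_\tau^\star)\|$ need not be small in the constrained case. I would keep it as a constant, bounded by $\|V(\u_\tau^\star)\|+\tau\|\u_\tau^\star\|$, which is bounded on a bounded set by Lipschitzness. The remaining care is in handling the expectation of the supremum over $\u'$. Applying Cauchy–Schwarz to the products of random factors, we take expectations to get
\[
\E\,\mathcal G^R(\u_T)\lesssim\sqrt{\E\|\u_T-\u_\tau^\star\|^2}+\tau .
\]
This yields $T^{-(\alpha-\delta)/2}+T^{-\delta}$ in case (i), and $T^{-1/4}+T^{-1/4}$ in case (ii). Balancing the two terms in case (i) under $\alpha<1-\delta$ gives $\delta\to1/4$ and $\alpha\to3/4$, which is exactly the stated choice with $\varepsilon$ slack.
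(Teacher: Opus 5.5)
Your proposal is correct, and Steps 1 and 2 follow the paper. The projection inequality, strong monotonicity and Young's inequality give the $(1-\eta_t\tau)$ contraction with an $O(\eta_t^2\sigma^2)$ error. The two schedules are unrolled with the same Chung-type estimates, and summability of $\eta_t^2$ gives Lemma~\ref{lem:bd_iter}. For PS-OGDA the paper's potential carries $3\eta_{t-1}^2\|W(\u_{t-\frac12})-W(\u_{t-\frac32})\|^2$ rather than your $\|W(\u_t)-W(\u_{t-\frac12})\|^2$; yours also closes under the same stepsize bound.

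The genuine difference is Step 3.

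\textbf{The paper's conversion.} Lemma~\ref{lem:relation} works with function values. It writes $\mathcal G^R(\u)=F(\x,\y(\x))-F(\x(\y),\y)$. It bounds the two ``optimization errors'' by the $L$-smoothness quadratic upper bound around $\x_\tau^\star$ and $\y_\tau^\star$. It bounds the ``approximation error'' by $\frac{\tau}{2}(\|\y(\x)\|^2+\|\x(\y)\|^2)$ using the saddle inequalities of $F_\tau$. All gradients there are evaluated at points of a fixed bounded set. So the resulting bound $M_R\sqrt{\epsilon}+\frac{L}{2}\epsilon+O(\tau)$ needs no moment bound on $\u_T$ itself.

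\textbf{Your conversion.} Your route is operator-based. The first-order convex--concave inequality gives $\langle V(\u_T),\u_T-\u'\rangle$, and the perturbed VI removes $\langle W(\u_\tau^\star),\u_\tau^\star-\u'\rangle$. Everything after that uses only monotonicity and Lipschitz continuity of $V$. So the same argument bounds the restricted VI gap $\sup_{\u'}\langle V(\u'),\u_T-\u'\rangle$ for a general monotone Lipschitz operator. You also correctly see that $\|W(\u_\tau^\star)\|$ is only $O(1)$, not small.

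\textbf{What it costs.} The factor $\|\u_T-\u'\|$ brings the second moment of $\u_T$ into the conversion. Lemma~\ref{lem:bd_iter} therefore becomes essential, rather than merely fixing the comparison sets. You can avoid this by splitting $\|\u_T-\u'\|\le\|\u_T-\u_\tau^\star\|+\|\u_\tau^\star-\u'\|$, which reproduces an $O(\epsilon)$ term as in the paper.

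\textbf{A small correction in case (ii).} The stepsize conditions need only the first entry of the max defining $b$. The $\tau^{-2}$ entry is what makes $\sum_t\eta_t^2=\frac{4}{\tau^2}\sum_t(t+b)^{-2}=O(1)$ uniformly in $T$. That uniformity is what keeps $C$, and hence your Step 3 constants, independent of $T$ when $\tau=T^{-1/4}$.
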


\begin{remark}\label{rmk:gap} (i)  For the polynomial stepsize, the condition $\delta < 1-\alpha$ ensures that the cumulative effect of 
the regularization dominates the stochastic fluctuations over the optimization 
horizon. Indeed, the leading term in the recursion scales as $\exp \left(-\tau \sum_{t=1}^T \eta_t\right)$, where $\sum_{t=1}^T \eta_t=\mathcal{O}(T^{1-\alpha}).$ 
For this factor to vanish and become negligible relative to the remaining 
error terms at horizon $T$, we require $\tau \sum_{t=1}^T \eta_t \to \infty$ 
as $T\to\infty$, which, under $\tau = \Theta(T^{-\delta})$, translates 
exactly to $\delta < 1-\alpha$.\\ (ii) The regularization-aware harmonic stepsize is   horizon-dependent. 
The choice $\eta_t=\tfrac{2}{\tau(t+b)}$ gives $\eta_t\tau=\frac{2}{t+b}$,
so the contraction term admits an exact telescoping bound. 
The offset $b=\Theta(\tau^{-2})$ ensures that $\sum_{t=0}^{\infty}\eta_t^2
=
\frac{4}{\tau^2}\sum_{t=0}^{\infty}\frac{1}{(t+b)^2}
\le
\frac{C}{\tau^2 b}
=
\Theta(1)$,
which is needed for the uniform mean-square boundedness of the iterates. 
This harmonic schedule avoids the strict boundary condition $\alpha+\delta<1$ that arises under the polynomial stepsize analysis.
\end{remark}
 
In the unconstrained setting, we also obtain a last-iterate guarantee in terms of the gradient norm.
\begin{corollary}[Unconstrained setting]\label{coro:uncon}
Under the assumptions of Theorem~\ref{thm:rate_L2}, in the unconstrained setting, the last iterate generated by PS-EG/PS-OGDA satisfies
	\begin{enumerate}[label=(\roman*)]
	\item \textbf{Polynomial stepsize.} With the polynomial stepsize stated in Theorem~\ref{thm:rate_L2}, we have
\[
\mathbb{E} [\|V(\mathbf{u}_T)\| ] \leq \mathcal{O}(T^{-1/4+\varepsilon})
\]
for any $\varepsilon>0$.
	\item \textbf{Regularization-aware harmonic stepsize.} 	With the regularization-aware harmonic stepsize stated in Theorem~\ref{thm:rate_L2}, we have
	\[
	\E[\|V(\u_T)\|]\leq\mO(T^{-1/4}).
	\]
	\end{enumerate}
\end{corollary}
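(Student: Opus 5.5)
Corollary~\ref{coro:uncon} follows from the same distance estimate that underlies Theorem~\ref{thm:rate_L2}. The only new ingredient is a conversion from distance to the gradient norm. In the unconstrained case we have $\U=\R^{n+d}$. Let $\u_\tau^\star$ denote the unique zero of $W=V+\tau\,\mathrm{id}$, which exists and is unique because $W$ is $\tau$-strongly monotone and continuous. Since $W(\u_\tau^\star)=0$, we have $V(\u_\tau^\star)=-\tau\u_\tau^\star$. By Lipschitz continuity of $V$, with constant at most $2L$ on the concatenated variable under Assumption~\ref{ass:l_smooth},
\[
\|V(\u_T)\|\le \|V(\u_T)-V(\u_\tau^\star)\|+\|V(\u_\tau^\star)\| \le 2L\|\u_T-\u_\tau^\star\|+\tau\|\u_\tau^\star\|.
\]
Taking expectations and applying Jensen gives
\[
\E\|V(\u_T)\|\le 2L\sqrt{\E\|\u_T-\u_\tau^\star\|^2}+\tau\|\u_\tau^\star\|.
\]

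Next I bound $\|\u_\tau^\star\|$ uniformly in $\tau$, using a standard Tikhonov argument. Monotonicity of $V$ gives $\langle V(\u_\tau^\star)-V(\u^\star),\u_\tau^\star-\u^\star\rangle\ge0$. Substituting $V(\u^\star)=0$ and $V(\u_\tau^\star)=-\tau\u_\tau^\star$ yields $\langle \u_\tau^\star,\u_\tau^\star-\u^\star\rangle\le 0$. Hence $\|\u_\tau^\star\|\le\|\u^\star\|$, so the bias term is at most $\tau\|\u^\star\|$.

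For the distance term I invoke the last-iterate mean-square bound on $\E\|\u_T-\u_\tau^\star\|^2$ established in the proof of Theorem~\ref{thm:rate_L2}. For polynomial stepsizes this bound is $\mO(T^{-(\alpha-\delta)}+\text{exp-small})$, and for harmonic stepsizes it is $\mO(\eta_T/\tau)=\mO(1/(\tau^2 T))$. Its square root produces exactly the rates $T^{-(\alpha-\delta)/2}$ and $T^{-1/2}\tau^{-1}$, matching the rates that appear in the gap bound. With $\tau=T^{-\delta}$ and the stated choices of $\alpha,\delta$, the total is $\mO(T^{-(\alpha-\delta)/2}+T^{-\delta})=\mO(T^{-1/4+\varepsilon})$. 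With $\tau=T^{-1/4}$ the distance term is $\mO(T^{-1/2}T^{1/4})=\mO(T^{-1/4})$, and the bias term $\tau\|\u^\star\|$ is also $\mO(T^{-1/4})$.

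The main obstacle is making sure the intermediate distance bound, rather than only the final gap bound, is available in the form above. If the theorem's proof only records the gap, I would re-derive the distance bound from the one-step recursion
\[
\E\|\u_{t+1}-\u_\tau^\star\|^2\le(1-\tau\eta_t)\E\|\u_t-\u_\tau^\star\|^2+c\,\eta_t^2\sigma^2,
\]
which is valid under the stated stepsize caps. Unrolling it gives the bound directly: a discrete Gronwall/Chung-lemma argument in the polynomial case, and telescoping with $\tau\eta_t=2/(t+b)$ in the harmonic case.
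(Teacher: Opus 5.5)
Your proposal is correct and matches the paper's argument. The paper likewise combines the distance bounds of Theorem~\ref{thm:pert_rate} ($\mO(T^{-\alpha+\delta})$ for polynomial stepsizes, $\mO(T^{-1/2})$ for harmonic stepsizes) with Lemma~\ref{lem:rela_sta}, which uses $V(\u_\tau^\star)=-\tau\u_\tau^\star$, the $2L$-Lipschitz bound on $V$, and $\|\u_\tau^\star\|\le\|\u^\star\|$ from Lemma~\ref{lem:per_sol}; the distance bound you worried about is recorded explicitly there. The only cosmetic difference is that you apply the triangle inequality before Jensen, whereas the paper bounds $\|V(\u)\|^2$ first, so your bias term carries $\tau$ instead of $\sqrt{2}\tau$.
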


These results are slightly slower than the best known last-iterate guarantee of $\mathcal{O}(T^{-1/3})$ in the unconstrained setting. However, achieving $\mathcal{O}(T^{-1/3})$ typically relies on stronger stochastic oracles (e.g., multiple samples per query) and variance reduction techniques~\cite{cai2022stochastic}.

\subsection{Anytime Last-Iterate Convergence}
\label{sec:any_rate}

We next consider the setting where the horizon $T$ is unavailable. Instead of selecting the perturbation parameter according to the horizon, we employ diminishing perturbation and stepsize schedules, resulting in an anytime algorithm.

\begin{theorem}\label{thm:rate_any}
Suppose Assumptions~\ref{ass:cc}, \ref{ass:l_smooth},
\ref{ass:exist}, and~\ref{ass:stoc} hold.
Let $\eta_t=\eta_0(t+b)^{-3/5}, 
\tau_t=\tau_0(t+b)^{-1/5},$ 
where $b\in \mathbb{N}_+$, and $\eta_0\tau_0b^{-4/5}\le1.$ 
Assume further that $\eta_{t}\leq \min\left\{\tfrac{\tau_t}{192\left(\tau_{t}-\tau_{t-1}\right)^2}, \tfrac{1}{96 \left(L+\tau_{t-1}\right) },\tfrac{1}{5\tau_t}\right\}$ holds for all $t\geq 1$.
Then, for both PS-EG and PS-OGDA, we have
\[
\mathbb E[\mathcal G^R(\u_t)]
=
\mathcal O(t^{-1/5})  \qquad \forall t\geq 1.
\]
In the unconstrained setting, we also have
\[
\mathbb{E} [\|V(\mathbf{u}_t)\| ] \leq \mathcal{O}(t^{-1/5}) \qquad \forall t\geq 1.
\]  
\end{theorem}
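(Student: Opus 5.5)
We prove Theorem~\ref{thm:rate_any} by tracking the moving regularized solution. Let $\u^\star_{\tau}$ denote the unique solution of the VI associated with $W_\tau=V+\tau\,\mathrm{Id}$ over $\U$, and set $D_t\coloneqq\E\|\u_t-\u^\star_{\tau_t}\|^2$. The argument has three steps: a one-step contraction for the fixed-$\tau$ iteration, a bound on the drift of the regularized solution as $\tau_t$ decreases, and a conversion of the resulting distance bound into the restricted gap and gradient-norm bounds.

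\textbf{Step 1: one-step contraction.} For PS-EG and for PS-OGDA (using the standard Lyapunov potential augmented with $\|\u_{t+\frac12}-\u_{t-\frac12}\|^2$-type terms), the stepsize conditions $\eta_t\le 1/(96(L+\tau_{t-1}))$ and $\eta_t\le 1/(5\tau_t)$ give
\[
\E\|\u_{t+1}-\u^\star_{\tau_t}\|^2\le(1-c\,\eta_t\tau_t)\,\E\|\u_t-\u^\star_{\tau_t}\|^2+C\eta_t^2\sigma^2 .
\]
This follows from strong monotonicity of $W_{\tau_t}$, Lipschitzness with constant $L+\tau_t$, and unbiasedness of the noise, as in the horizon-dependent analysis.

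\textbf{Step 2: drift of the regularized solution.} Testing the two VIs against each other and using monotonicity of $V$ gives $\|\u^\star_{\tau}-\u^\star_{\tau'}\|\le\frac{|\tau-\tau'|}{\tau}\|\u^\star\|$, and also $\|\u^\star_\tau\|\le\|\u^\star\|$. Young's inequality then yields
\[
\|\u_{t+1}-\u^\star_{\tau_{t+1}}\|^2\le\Bigl(1+\tfrac{c}{2}\eta_t\tau_t\Bigr)\|\u_{t+1}-\u^\star_{\tau_t}\|^2+\Bigl(1+\tfrac{2}{c\eta_t\tau_t}\Bigr)\frac{(\tau_t-\tau_{t+1})^2}{\tau_{t+1}^2}\|\u^\star\|^2 .
\]
The condition $\eta_t\le \tau_t/(192(\tau_t-\tau_{t-1})^2)$ is what makes the index shift and the cross-terms in the OGDA potential harmless.

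With $\tau_t\sim t^{-1/5}$ and $\eta_t\sim t^{-3/5}$, the relevant scales are $\eta_t\tau_t\sim t^{-4/5}$, $(\tau_t-\tau_{t+1})/\tau_t\sim t^{-1}$, and $\eta_t^2\sim t^{-6/5}$. The recursion therefore becomes
\[
D_{t+1}\le\bigl(1-c't^{-4/5}\bigr)D_t+O\bigl(t^{-6/5}\bigr)+O\bigl(t^{-2}\,t^{4/5}\bigr).
\]
A Chung-type lemma, proved by induction with the hypothesis $D_t\le K t^{-2/5}$, gives $D_t=O(t^{-6/5+4/5})=O(t^{-2/5})$. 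The drift term contributes only $O(t^{-6/5}\cdot t^{4/5})=O(t^{-2/5})$, so it is of the same order. The initial condition $\eta_0\tau_0b^{-4/5}\le1$ keeps the contraction factor nonnegative. Uniform mean-square boundedness of the iterates, which makes $C$ and $\mathcal B_{\x},\mathcal B_{\y}$ well defined, follows because $D_t$ is bounded and $\|\u^\star_\tau-\u^\star\|\le\|\u^\star\|$ (Lemma~\ref{lem:bd_iter}).

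\textbf{Step 3: conversion to the gap and gradient norm.} Convexity–concavity gives, for $\u'\in\mathcal B_{\x}\times\mathcal B_{\y}$,
\[
F(\x_t,\y')-F(\x',\y_t)\le\langle V(\u_t),\u_t-\u'\rangle .
\]
Split this as
\[
\langle W_{\tau_t}(\u_t),\u_t-\u'\rangle-\tau_t\langle\u_t,\u_t-\u'\rangle .
\]
The second piece is $O(\tau_t)$ because the iterates and the comparison sets are bounded in mean square. For the first piece, use the VI at $\u^\star_{\tau_t}$ together with Lipschitzness:
\[
\langle W_{\tau_t}(\u_t),\u_t-\u'\rangle\le\langle W_{\tau_t}(\u_t)-W_{\tau_t}(\u^\star_{\tau_t}),\u_t-\u'\rangle+\langle W_{\tau_t}(\u^\star_{\tau_t}),\u_t-\u^\star_{\tau_t}\rangle\le O\bigl(\|\u_t-\u^\star_{\tau_t}\|\bigr),
\]
where the second term is bounded via $\|W_{\tau_t}(\u^\star_{\tau_t})\|\le\|V(\u^\star)\|+O(1)$ on bounded sets. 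Taking expectations with Cauchy–Schwarz gives $O(\sqrt{D_t})+O(\tau_t)=O(t^{-1/5})$.

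In the unconstrained case the argument is simpler. Since $W_{\tau_t}(\u^\star_{\tau_t})=0$,
\[
\|V(\u_t)\|\le(L+\tau_t)\|\u_t-\u^\star_{\tau_t}\|+\tau_t\|\u^\star_{\tau_t}\|=O\bigl(t^{-1/5}\bigr).
\]

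\textbf{Main obstacle.} The hardest step is Step 1 combined with the drift for PS-OGDA. The optimistic potential involves the past half-iterate evaluated with the old $\tau_{t-1}$, so the Lyapunov function must absorb the mismatch between $W_{\tau_{t-1}}$ and $W_{\tau_t}$. I would bound $\|W_{\tau_t}(\u)-W_{\tau_{t-1}}(\u)\|\le|\tau_t-\tau_{t-1}|\|\u\|$ and absorb it into the contraction using the stated conditions on $\eta_t$.
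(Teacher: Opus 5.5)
Your argument is correct, and its core (Steps 1--2) is the paper's proof of Theorem~\ref{thm:pert_any}. That proof contracts toward the frozen center $\u^\star_{\tau_t}$, then applies a Young step with weight $\propto\eta_t\tau_t$ to pass to $\u^\star_{\tau_{t+1}}$. It controls the drift with Lemma~\ref{lem:per_sol} and finishes with Lemma~\ref{lem:chung_new}, using $p=4/5$ and both error exponents equal to $6/5$. Your unconstrained bound is Lemma~\ref{lem:rela_sta}. For PS-OGDA, the paper realizes the absorption you sketch by carrying $\frac{12(\tau_t-\tau_{t-1})^2\eta_t^2}{1-\eta_t\tau_t/2}\E\|\u_{t-\frac12}\|^2$ inside the potential. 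It then uses $\E\|\u_{t+\frac12}\|^2\le 4\E\|\u_t-\u^\star_{\tau_t}\|^2+4\E\|\u_{t+\frac12}-\u_t\|^2+2\|\u^\star\|^2$, which is exactly where $\eta_t\le\tau_t/(192(\tau_t-\tau_{t-1})^2)$ enters.

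The genuine difference is the distance-to-gap conversion. You linearize by convexity--concavity, $\mathcal G^R(\u_t)\le\max_{\u'}\langle V(\u_t),\u_t-\u'\rangle$, and then discard $\langle W_{\tau_t}(\u^\star_{\tau_t}),\u^\star_{\tau_t}-\u'\rangle\le 0$ via the perturbed VI. The paper's Lemma~\ref{lem:relation} instead splits the gap into two parts:
\begin{itemize}
\item an optimization error, bounded by the $L$-smooth upper model around $\u^\star_\tau$, with gradients taken at $\u^\star_\tau$ and at the inner optimizers $\y(\x)\in\mathcal B_{\y}$ and $\x(\y)\in\mathcal B_{\x}$;
\item an approximation error, bounded by the saddle inequalities of $F_\tau$, which gives the bias $\frac{\tau}{2}(\|\y(\x)\|^2+\|\x(\y)\|^2)$.
\end{itemize}

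Your route is shorter and avoids both the $L\epsilon/2$ term and the inner optimizers. However, because it evaluates $W_{\tau_t}$ at the random iterate, it needs a second-moment bound on $\u_t$ and Cauchy--Schwarz on $\E[\|\u_t-\u^\star_{\tau_t}\|\,\|\u_t-\u'\|]$. In the paper's version the gradient factor is deterministically bounded on a compact set. So its conversion inequality uses only the distance estimate, with boundedness entering only through the definition of $\mathcal B_{\x}\times\mathcal B_{\y}$, and it shows the $O(\tau)$ term directly as a function-value bias. Finally, you derive uniform mean-square boundedness directly from $D_t=O(t^{-2/5})$. This is a slightly more economical substitute for the summability/Gr\"onwall argument in Lemma~\ref{lem:bd_iter}.
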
 
For the unconstrained setting, the anytime convergence rate of PS-EG can be sharpened by a different reference-tracking argument.

\begin{theorem}[Sharper unconstrained anytime gradient-norm bound for PS-EG]
\label{thm:rate_any_uncon_eg}
Suppose that $\U=\mathbb R^{n+d}$, and Assumptions~\ref{ass:cc}, \ref{ass:l_smooth},
\ref{ass:exist}, and~\ref{ass:stoc} hold. Let the iterates be generated by PS-EG with
$\tau_t=\tau_0(t+b)^{-1/4}, 
\eta_t=\eta_0(t+b)^{-3/4},$
where $\eta_0,\tau_0>0$, $\eta_0\tau_0>1$, and $b\in\mathcal N_+$ is chosen sufficiently large so that $\eta_t\le \min\left\{\frac{\tau_t}{16(2L+\tau_t)^2}, 
 \frac{1}{2\tau_t}\right\}$ holds for all $t\geq 1$.  Then, for PS-EG, we have
\[
\E[\|V(\u_t)\| ]=\mO(t^{-1/4}), \qquad \forall t\geq 1.
\]
\end{theorem}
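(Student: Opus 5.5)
Our plan is to track the distance between $\u_t$ and the moving reference point $\u^\star_{\tau_t}$, the unique zero of $W_t(\u)=V(\u)+\tau_t\u$. From this we recover $\|V(\u_t)\|$ through
\[
\|V(\u_t)\|\le \|W_t(\u_t)\|+\tau_t\|\u_t\|\le (L+\tau_t)\|\u_t-\u^\star_{\tau_t}\|+\tau_t\|\u^\star_{\tau_t}\|+\tau_t\|\u_t-\u^\star_{\tau_t}\|.
\]
Two standard facts will be used. First, strong monotonicity and $\langle V(\u^\star_\tau)-V(\u^\star),\u^\star_\tau-\u^\star\rangle\ge0$ give $\|\u^\star_\tau\|\le\|\u^\star\|$. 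Second, the regularized path is Lipschitz in $\tau$ in the form $\|\u^\star_{\tau_{t+1}}-\u^\star_{\tau_t}\|\le \frac{|\tau_t-\tau_{t+1}|}{\tau_{t+1}}\|\u^\star\|$. It therefore suffices to show $\E\|\u_t-\u^\star_{\tau_t}\|^2=\mO(t^{-1/2})$. The bias term $\tau_t\|\u^\star\|=\mO(t^{-1/4})$ then matches, and Jensen's inequality gives the claimed rate.

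\textbf{One-step contraction for unconstrained PS-EG.} With $\u_{t+\frac12}=\u_t-\eta_t\hat W_t$ and $\u_{t+1}=\u_t-\eta_t\hat W_{t+\frac12}$, we expand $\|\u_{t+1}-\u^\star_{\tau_t}\|^2$ in the usual extragradient way. We use $\tau_t$-strong monotonicity at $\u_{t+\frac12}$ and the $(L+\tau_t)$-Lipschitzness of $W_t$ to cancel the extrapolation error. This is where the condition $\eta_t\le \tau_t/(16(2L+\tau_t)^2)$ enters; it is stronger than the usual $1/L$, so that both noise injections can be absorbed. The expected result is
\[
\E\|\u_{t+1}-\u^\star_{\tau_t}\|^2\le (1-\eta_t\tau_t)\,\E\|\u_t-\u^\star_{\tau_t}\|^2+c\,\eta_t^2\sigma^2 .
\]
The key point is that in the unconstrained case we never need the projection-based three-point inequality. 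The noise at the half step enters only through $\eta_t^2\|\bm\zeta\|^2$ terms multiplied by constants, using $\eta_t L\le 1$ and the martingale property of the second-step noise. This absence of projections is what permits the sharper exponent compared with Theorem~\ref{thm:rate_any}.

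\textbf{Reference tracking.} Young's inequality with weight $\eta_t\tau_t/2$ gives
\[
\|\u_{t+1}-\u^\star_{\tau_{t+1}}\|^2\le(1+\tfrac{\eta_t\tau_t}{2})\|\u_{t+1}-\u^\star_{\tau_t}\|^2+(1+\tfrac{2}{\eta_t\tau_t})\|\u^\star_{\tau_t}-\u^\star_{\tau_{t+1}}\|^2 .
\]
Combining with the contraction step, the sequence $a_t=\E\|\u_t-\u^\star_{\tau_t}\|^2$ satisfies
\[
a_{t+1}\le(1-\tfrac{\eta_t\tau_t}{2})a_t+c\eta_t^2\sigma^2+\tfrac{c'}{\eta_t\tau_t}\Big(\tfrac{\tau_t-\tau_{t+1}}{\tau_{t+1}}\Big)^2\|\u^\star\|^2 .
\]
With the chosen schedules we have $\eta_t\tau_t=\eta_0\tau_0(t+b)^{-1}$, $\eta_t^2\asymp(t+b)^{-3/2}$, and $(\tau_t-\tau_{t+1})/\tau_{t+1}\asymp (t+b)^{-1}$, so the drift term is $\asymp(t+b)^{-1}$. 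Both forcing terms are therefore $\mO((t+b)^{-3/2})$.

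\textbf{Solving the recursion.} A Chung-type lemma applies to $a_{t+1}\le(1-\frac{\kappa}{t+b})a_t+\frac{D}{(t+b)^{3/2}}$. It yields $a_t=\mO((t+b)^{-1/2})$ provided $\kappa>1/2$; this is where $\eta_0\tau_0>1$ is used. We prove this by induction on $a_t\le M(t+b)^{-1/2}$, with $M$ chosen large enough to cover $a_1$ and the constant $D/(\kappa-\frac12)$.

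\textbf{Main obstacle.} We expect the main difficulty to be the one-step contraction with explicit constants. The coefficient of $\|\u_t-\u^\star_{\tau_t}\|^2$ must come out as $1-\eta_t\tau_t$ (or at least $1-\kappa'\eta_t\tau_t$ with $\kappa'$ large enough that $\kappa>1/2$ survives), rather than something weaker like $1-\eta_t\tau_t/4$. If the constant degrades, the assumption $\eta_0\tau_0>1$ would have to be compensated, and we would redo the induction with $\kappa=\kappa'\eta_0\tau_0$.
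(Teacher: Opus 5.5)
There is a genuine gap in the step you treat as routine. You compute that the drift term $\frac{c'}{\eta_t\tau_t}\bigl(\frac{\tau_t-\tau_{t+1}}{\tau_{t+1}}\bigr)^2\|\u^\star\|^2$ is of order $(t+b)^{-1}$; it is roughly $\frac{c'\|\u^\star\|^2}{16\eta_0\tau_0}(t+b)^{-1}$. You then assert that both forcing terms are $\mO((t+b)^{-3/2})$, which is false. What you actually have is
$a_{t+1}\le(1-\frac{\kappa}{t+b})a_t+\frac{D}{(t+b)^{3/2}}+\frac{D'}{t+b}$.
This only gives $\limsup_t a_t\le D'/\kappa$, i.e.\ $a_t=\mO(1)$, with no decay.

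Tuning constants or the Young weight cannot fix this. Preserving the contraction forces $\gamma_t=\mO(\eta_t\tau_t)$, so the moving-center contribution is at least of order $\|\u^\star_{\tau_t}-\u^\star_{\tau_{t+1}}\|^2/(\eta_t\tau_t)$. With the bound of Lemma~\ref{lem:per_sol}, which is essentially all the standing assumptions give, a schedule $\eta_t\propto(t+b)^{-\alpha}$, $\tau_t\propto(t+b)^{-\beta}$ yields the rate $\min\{\alpha-\beta,\,2(1-\alpha-\beta)\}$ for $a_t$. That rate is $0$ at $\alpha+\beta=1$, which is your schedule. This is exactly why the paper's moving-center analysis (Theorem~\ref{thm:pert_any}) requires $\alpha+\beta<1$ and stops at $t^{-1/5}$. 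Relatedly, the absence of projections is not what buys the better exponent: your one-step contraction has the same form as \eqref{eq:mid_1}, which already holds with projections.

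The missing idea is to stop measuring progress by the distance to $\u^\star_{\tau_t}$. The paper compares $\u_t$ with the deterministic trajectory $\bar{\u}_{t+1}=\mathcal T_t(\bar{\u}_t)$, $\bar{\u}_0=\u_0$, driven by the same time-varying EG maps $\mathcal T_t(\u)=\u-\eta_tW_t(\u-\eta_tW_t(\u))$. Both sequences are pushed by the same map, which is a $(1-\eta_t\tau_t)$-contraction in squared norm when $\eta_t\le\tau_t/(16(2L+\tau_t)^2)$ (Lemma~\ref{lem:eg_map_contraction}). Hence the recursion for $\E[\|\u_t-\bar{\u}_t\|^2]$ has no drift term at all. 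It has only noise of order $\eta_t^2\sigma^2+\eta_t^3M_t^2\sigma^2/\tau_t=\mO((t+b)^{-3/2})$, with $M_t=2L+\tau_t$. Your Chung step with $\kappa=\eta_0\tau_0/2>1/2$ then does give $\mO(t^{-1/2})$.

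For $\bar{\u}_t$ itself, one controls the residual $\|W_t(\bar{\u}_t)\|$, not a distance. The EG map shrinks it by the factor $1-\eta_t\tau_t/2$. Switching from $\tau_t$ to $\tau_{t+1}$ changes it by only $(\tau_t-\tau_{t+1})\|\bar{\u}_{t+1}\|\le\frac{\tau_0B}{4}(t+b)^{-5/4}$, with no division by $\tau_t$, once $\bar{\u}_t$ is shown to be bounded (by comparison with the fixed $\u^\star$). This gives $\|V(\bar{\u}_t)\|=\mO(t^{-1/4})$, and the theorem follows from Lipschitz continuity of $V$ and Jensen's inequality. Note that this yields only $\|\bar{\u}_t-\u^\star_{\tau_t}\|\le\|W_t(\bar{\u}_t)\|/\tau_t=\mO(1)$. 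That confirms the distance to the moving center is the wrong quantity to track at this schedule.
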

 
\begin{remark}
The proof of Theorem~\ref{thm:rate_any_uncon_eg} differs fundamentally from the analysis used throughout the rest of the paper. Rather than tracking the moving perturbed saddle point $\u_{\tau_t}^\star$, we introduce a deterministic reference trajectory generated by the same time-varying perturbed EG maps and establish a stochastic tracking bound between the two trajectories. Since, in the unconstrained setting, the gradient norm $\|V(\u)\|$ itself serves as the stationarity measure, this reference-tracking argument allows us to analyze the gradient norm directly, without first establishing convergence to the perturbed solution. Consequently, unlike our general proof framework, the analysis neither requires uniform mean-square boundedness of the stochastic iterates nor relies on converting a distance-to-solution estimate into a restricted primal--dual gap bound. Avoiding these ingredients bypasses one of the main technical bottlenecks in our analysis, allowing a more aggressive diminishing perturbation schedule and leading to the improved $\mathcal{O}(t^{-1/4})$ convergence rate.

This argument relies essentially on the fact that the gradient norm is an appropriate stationarity measure only in the unconstrained setting. Therefore, it does not extend directly to constrained problems, where convergence must instead be characterized through the restricted primal--dual gap. Moreover, the reference-tracking argument is specific to PS-EG and does not extend to PS-OGDA, whose recursive inequality contains additional terms arising from the variation of the perturbed operator across consecutive iterates.
\end{remark}
   
\section{Proof of Main Results}
\label{sec:proof}

In this section, we provide a high-level overview of the analysis underlying the main results.

Our analysis follows a unified two-stage framework. We first establish last-iterate convergence in terms of the squared distance to the saddle point of the perturbed problem, and then translate this distance estimate into convergence guarantees for the stationarity measures introduced in Section~\ref{sec:stat}, namely, the restricted primal--dual gap and, in the unconstrained setting, the norm of the gradient operator.

The first stage is developed in Section~\ref{sec:distance}. The horizon-dependent and the anytime analyses differ only in this stage: while the horizon-dependent analysis considers a fixed perturbed saddle point, the anytime analysis additionally requires tracking a time-varying perturbed saddle point induced by the diminishing perturbation parameter. The second stage is presented in Section~\ref{sec:dist_stat}, where the distance estimates are translated into convergence guarantees for the original problem. Combining these two ingredients yields the proofs of the main convergence theorems in Section~\ref{sec:proved_all}.

Finally, Section~\ref{sec:improved} develops a separate analysis for the unconstrained PS-EG method. Rather than following the above two-stage framework, it introduces a reference-tracking argument that yields a sharper anytime convergence rate in terms of the gradient norm.
\subsection{Distance Analysis}
\label{sec:distance}

In this subsection, we establish last-iterate convergence in terms of the squared distance to the saddle point of the perturbed problem. This constitutes the main technical component of the paper and serves as the foundation for all subsequent convergence guarantees for the original problem.

Our analysis begins by deriving recursive inequalities that characterize the one-step progress of the stochastic iterates. The quadratic perturbation renders the gradient operator $\tau$-strongly monotone in the horizon-dependent setting and $\tau_t$-strongly monotone in the anytime setting, thereby introducing a contraction into the recursion. By carefully balancing this contraction against the stochastic error, we obtain non-asymptotic convergence bounds for the distance to the perturbed saddle point.

Before proceeding, we summarize several basic properties of the perturbed saddle point that will be used repeatedly throughout the analysis. In particular, the second property quantifies the variation of the perturbed solution as the perturbation parameter changes, which is essential for the anytime analysis.

\begin{lemma}[Properties of perturbed saddle points]\label{lem:per_sol} Under Assumptions~\ref{ass:cc}, \ref{ass:l_smooth}, and~\ref{ass:exist}, for any $\tau > 0$,  if $\u_{\tau}^\star$ solves the problem~\eqref{eq:prob_per}, then we have
	\begin{enumerate}
		\item For any $\tau > 0$, the perturbed solution $\u_\tau^\star$ satisfies
		\[
		\|\u_\tau^\star\|\le \|\u^\star\|.
		\] 
		\item For any $0<\tau'<\tau$, we have
		\[
		\|\u_\tau^\star-\u_{\tau'}^\star\|
		\le \frac{|\tau-\tau'|}{\tau}\|\u_{\tau'}^\star\|\leq \frac{|\tau-\tau'|}{\tau}\|\u^\star\|.
		\]  
	\end{enumerate}
\end{lemma}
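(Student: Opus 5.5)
The plan is to work entirely in the VI formulation. The perturbed saddle point $\u_\tau^\star$ is characterized by
\[
\langle V(\u_\tau^\star)+\tau\u_\tau^\star,\ \u-\u_\tau^\star\rangle\ge 0 \qquad \forall \u\in\U .
\]
Since $W=V+\tau\,\mathrm{Id}$ is $\tau$-strongly monotone and continuous, a unique such point exists, so $\u_\tau^\star$ is well defined. Both parts then follow by testing this inequality against a suitable point and adding it to a second VI inequality, using the monotonicity of $V$.

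\textbf{Part 1.} Test the perturbed VI at $\u=\u^\star\in\U$. Test the original VI \eqref{eq:VI} at $\u=\u_\tau^\star$, which gives $\langle V(\u^\star),\u_\tau^\star-\u^\star\rangle\ge0$. Adding the two inequalities gives
\[
\langle V(\u_\tau^\star)-V(\u^\star),\ \u^\star-\u_\tau^\star\rangle+\tau\langle \u_\tau^\star,\u^\star-\u_\tau^\star\rangle\ge 0 .
\]
By monotonicity of $V$ the first term is $\le 0$. Hence $\tau\langle\u_\tau^\star,\u^\star-\u_\tau^\star\rangle\ge0$, that is, $\|\u_\tau^\star\|^2\le\langle\u_\tau^\star,\u^\star\rangle\le\|\u_\tau^\star\|\|\u^\star\|$, which yields $\|\u_\tau^\star\|\le\|\u^\star\|$.

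\textbf{Part 2.} Write the perturbed VI for $\tau$ tested at $\u_{\tau'}^\star$, and the one for $\tau'$ tested at $\u_\tau^\star$. Adding them and using monotonicity of $V$ as before gives
\[
\langle \tau\u_\tau^\star-\tau'\u_{\tau'}^\star,\ \u_{\tau'}^\star-\u_\tau^\star\rangle\ge0 .
\]
Now decompose $\tau\u_\tau^\star-\tau'\u_{\tau'}^\star=\tau(\u_\tau^\star-\u_{\tau'}^\star)+(\tau-\tau')\u_{\tau'}^\star$. Substituting, this becomes
\[
\tau\|\u_\tau^\star-\u_{\tau'}^\star\|^2\le(\tau-\tau')\langle\u_{\tau'}^\star,\u_{\tau'}^\star-\u_\tau^\star\rangle\le|\tau-\tau'|\,\|\u_{\tau'}^\star\|\,\|\u_\tau^\star-\u_{\tau'}^\star\| .
\]
Dividing by $\tau\|\u_\tau^\star-\u_{\tau'}^\star\|$ (the claim is trivial if this norm is zero) gives the first inequality. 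Applying Part 1 with $\tau'$ in place of $\tau$ gives the second inequality.

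The only delicate point is choosing the right splitting in Part 2. The coefficient must be $\tau$, the larger parameter, so that the bound carries $\|\u_{\tau'}^\star\|$ with denominator $\tau$, exactly as the statement requires. Beyond that, the argument is routine.
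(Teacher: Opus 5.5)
Your proposal is correct and follows the paper's proof essentially step for step. In Part 1 you add the original VI tested at $\u_\tau^\star$ to the perturbed VI tested at $\u^\star$, then use monotonicity and Cauchy--Schwarz. In Part 2 you add the two perturbed VIs, use monotonicity, and split $\tau\u_\tau^\star-\tau'\u_{\tau'}^\star=\tau(\u_\tau^\star-\u_{\tau'}^\star)+(\tau-\tau')\u_{\tau'}^\star$, exactly as the paper does. Your extra remark that $\u_\tau^\star$ exists and is unique by $\tau$-strong monotonicity is harmless, since the lemma simply assumes $\u_\tau^\star$ is a solution.
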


\subsubsection{Recursive inequalities} 
In this section, we derive recursive inequalities for the EG and OGDA methods that characterize the one-step progress of the stochastic updates. Such recursions are standard in the analysis of stochastic extragradient methods~\cite{hsieh2019convergence,hsieh2020explore}. However, existing analyses do not account for the perturbation terms required in our framework and typically discard negative quadratic terms that play a crucial role in establishing contraction. We therefore derive refined recursive inequalities that explicitly capture these effects.  
 
We first derive the recursive inequality for PS-EG.
\begin{lemma}
	\label{lem:key_eg}
	Let $\{\u_t\}_{t=1}^T,\{\u_{t+\frac{1}{2}}\}_{t=1}^T$ be the iterates generated by the PS-EG. For all $t=1,2,\cdots$ and $\u \in \U$ that are $\F_{t+\frac{1}{2}}$-measurable, we have 
	\begin{align*}
		   \E[ \|\u_{t+1}-\u \|^2] 
        \leq\ & \E[ \|\u_{t }-\u \|^2]-2\eta_t \E [\langle W(\u_{t+\frac{1}{2}}), \u_{t+\frac{1}{2}}-\u   \rangle]\\
		&- \left(1-6\eta_t^2 \left(L+\tau_t\right)^2\right)\E[ \|\u_{t+\frac{1}{2}}-\u_t\|^2] + 6\eta_t^2\sigma^2.
	\end{align*}  
\end{lemma}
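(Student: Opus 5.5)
We follow the standard stochastic extragradient template, with the noise and Lipschitz errors handled by Young's inequality. Write $\hat W_t = W(\u_t)+\bm\zeta_t$ and $\hat W_{t+\frac12}=W(\u_{t+\frac12})+\bm\zeta_{t+\frac12}$, where $W=V+\tau_t\,\mathrm{Id}$ is $(L+\tau_t)$-Lipschitz. (Under Assumption~\ref{ass:l_smooth}, $V$ is Lipschitz with constant $\sqrt2L$ or $2L$; we absorb this into the constant or read $L$ as the Lipschitz constant of $V$, as the statement implicitly does.) Everything is carried out pathwise, and expectations are taken only at the end.

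\textbf{Step 1 (one-step inequality).} Since $\u_{t+1}=\proj_{\U}(\u_t-\eta_t\hat W_{t+\frac12})$, the projection inequality with $\u\in\U$ gives
\[
\|\u_{t+1}-\u\|^2\le\|\u_t-\u\|^2-\|\u_{t+1}-\u_t\|^2-2\eta_t\langle\hat W_{t+\frac12},\u_{t+1}-\u\rangle .
\]
Since $\u_{t+\frac12}=\proj_{\U}(\u_t-\eta_t\hat W_t)$, testing its projection inequality against $\u_{t+1}$ gives
\[
2\eta_t\langle\hat W_t,\u_{t+\frac12}-\u_{t+1}\rangle\le\|\u_t-\u_{t+1}\|^2-\|\u_{t+\frac12}-\u_t\|^2-\|\u_{t+1}-\u_{t+\frac12}\|^2 .
\]

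\textbf{Step 2 (combining).} Split $\u_{t+1}-\u=(\u_{t+1}-\u_{t+\frac12})+(\u_{t+\frac12}-\u)$ and add the two inequalities. This yields
\[
\|\u_{t+1}-\u\|^2\le\|\u_t-\u\|^2-2\eta_t\langle\hat W_{t+\frac12},\u_{t+\frac12}-\u\rangle-\|\u_{t+\frac12}-\u_t\|^2-\|\u_{t+1}-\u_{t+\frac12}\|^2+2\eta_t\langle\hat W_t-\hat W_{t+\frac12},\u_{t+1}-\u_{t+\frac12}\rangle .
\]
Apply Young's inequality to the last term:
\[
2\eta_t\langle a,b\rangle\le\eta_t^2\|a\|^2+\|b\|^2, \qquad \|a\|^2\le 3(L+\tau_t)^2\|\u_t-\u_{t+\frac12}\|^2+3\|\bm\zeta_t\|^2+3\|\bm\zeta_{t+\frac12}\|^2 .
\]
The $\|b\|^2$ term cancels $-\|\u_{t+1}-\u_{t+\frac12}\|^2$, leaving
\[
-\bigl(1-3\eta_t^2(L+\tau_t)^2\bigr)\|\u_{t+\frac12}-\u_t\|^2+3\eta_t^2\bigl(\|\bm\zeta_t\|^2+\|\bm\zeta_{t+\frac12}\|^2\bigr).
\]
The paper's constant $6$ leaves slack, for instance to cover a cruder split.

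\textbf{Step 3 (expectations).} Write $\langle\hat W_{t+\frac12},\u_{t+\frac12}-\u\rangle=\langle W(\u_{t+\frac12}),\u_{t+\frac12}-\u\rangle+\langle\bm\zeta_{t+\frac12},\u_{t+\frac12}-\u\rangle$. Since $\u$ and $\u_{t+\frac12}$ are $\F_{t+\frac12}$-measurable and $\E[\bm\zeta_{t+\frac12}\mid\F_{t+\frac12}]=0$, the cross term vanishes in expectation by the tower property. This is precisely why the statement requires $\u$ to be $\F_{t+\frac12}$-measurable. The two variance terms contribute at most $3\eta_t^2\cdot2\sigma^2=6\eta_t^2\sigma^2$ by Assumption~\ref{ass:stoc}. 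Taking total expectations then gives the claimed inequality, with coefficient $1-6\eta_t^2(L+\tau_t)^2$ or better.

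\textbf{Main obstacle.} The delicate points are the bookkeeping of the filtrations and the bias-free handling of noise. The noise $\bm\zeta_t$ enters $\u_{t+\frac12}$ and hence correlates with $\u_{t+\frac12}-\u$. The fix is never to pair $\bm\zeta_t$ with a comparison vector: it is bounded only through its squared norm in the Young step, which costs the factor $\eta_t^2$ rather than a bias. The constant $6$ accommodates the slack in these splittings.
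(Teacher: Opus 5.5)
Your proof is correct and is essentially the paper's. Steps~1--2 re-derive inline the projection inequality of Lemma~\ref{lem:proj_key}, which the paper simply cites; after that, both arguments use the same three-term split of $\hat W_t-\hat W_{t+\frac12}$ and the tower property for the inner-product term. Your direct use of $\tau_t=\tau_{t+\frac12}$ gives the factor $3$, where the paper gets $6$ because it splits off a vanishing $(\tau_t-\tau_{t+\frac12})^2$ term.

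One caveat on your aside about the Lipschitz constant. Like the paper, you are really using $L+\tau_t$ as the Lipschitz constant of $W$. A $2L$-Lipschitz $V$ cannot simply be absorbed with your split, since $3(2L+\tau_t)^2$ exceeds $6(L+\tau_t)^2$ for small $\tau_t$. To absorb it you would need to drop the cross term with $\bm\zeta_{t+\frac12}$ using its conditional mean-zero property. That gives $\tfrac32(2L+\tau_t)^2\le 6(L+\tau_t)^2$ and a noise term $4\eta_t^2\sigma^2\le 6\eta_t^2\sigma^2$.
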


\begin{proof} 
	Applying Lemma~\ref{lem:proj_key} with 
	\[
	(\x,\y_1,\y_2,\x_1^+,\x_2^+) 
	\leftarrow 
	(\u_t, \eta_t \hat{W}_t, \eta_t \hat{W}_{t+\frac{1}{2}}, \u_{t+\frac{1}{2}}, \u_{t+1})
	\]
	and setting $\p=\u$, we obtain
	\begin{align*}
		\|\u_{t+1}-\u\|^2
		\leq \|\u_t-\u\|^2
		-2\eta_t\langle \hat{W}_{t+\frac{1}{2}}, \u_{t+\frac{1}{2}}-\u \rangle
		+\eta_t^2\|\hat{W}_t-\hat{W}_{t+\frac{1}{2}}\|^2
		-\|\u_{t+\frac{1}{2}}-\u_t\|^2.
	\end{align*}
	Taking expectation on both sides yields
	\begin{align}\label{eq:main_constrained_eg}
		\E[\|\u_{t+1}-\u\|^2]
		\leq\ & \E[\|\u_t-\u\|^2]
		-2\eta_t \E[\langle \hat{W}_{t+\frac{1}{2}}, \u_{t+\frac{1}{2}}-\u \rangle]   + \eta_t^2 \E[\|\hat{W}_t-\hat{W}_{t+\frac{1}{2}}\|^2]\notag\\
		&- \E[\|\u_{t+\frac{1}{2}}-\u_t\|^2].
	\end{align}
	We next bound each term. Let $\mathcal{F}_t$ denote the filtration up to time $t$, and define $\E_t[\cdot] = \E[\cdot \mid \mathcal{F}_t]$.
	For the inner product term, by Assumption~\ref{ass:stoc}, we have
	\begin{align*}
		\E[\langle \hat{W}_{t+\frac{1}{2}}, \u_{t+\frac{1}{2}}-\u \rangle]
		= \E[\E_{t+\frac{1}{2}}[\langle \hat{W}_{t+\frac{1}{2}}, \u_{t+\frac{1}{2}}-\u \rangle]]  = \E[\langle W(\u_{t+\frac{1}{2}}), \u_{t+\frac{1}{2}}-\u \rangle].
	\end{align*}
For the quadratic term, we have
\begin{align*}
	\E[\|\hat{W}_t-\hat{W}_{t+\frac{1}{2}}\|^2]
	=\ & \E[\|W(\u_t)-W(\u_{t+\frac{1}{2}}) + \bm{\zeta_t}  - \bm{\zeta}_{t+\frac{1}{2}}\|^2] \\
	\leq\ & 3\E[\|W(\u_t)-W(\u_{t+\frac{1}{2}})\|^2]
	+  3\E[\E_t[\|\bm{\zeta_t} \|^2]]+3\E[\E_{t+\frac{1}{2}}[\|\bm{\zeta_}{t+\frac{1}{2}}\|^2]] \\
	=\ & 3\E[\|V(\u_t)+\tau_t \u_t-V(\u_{t+\frac{1}{2}}) -\tau_{t+\frac{1}{2}}\u_{t+\frac{1}{2}}\|^2]+ 6\sigma^2\\
	\leq\ & 6\left(L+\tau_t\right)^2 \E[\|\u_t-\u_{t+\frac{1}{2}}\|^2] + 6 \left(\tau_t-\tau_{t+\frac{1}{2}} \right)^2 \E[\|\u_{t}\|^2]+6\sigma^2,
\end{align*}
	where the first inequality follows from $\|a+b+c\|^2 \leq 3(\|a\|^2+\|b\|^2+\|c\|^2)$, 
	and the second follows from Assumption~\ref{ass:stoc} and $\tau_t\geq \tau_{t+\frac{1}{2}}$.
	
	Substituting the above bounds into~\eqref{eq:main_constrained_eg} yields the desired recursive inequality.
\end{proof}

A similar recursion can be established for the PS-OGDA update. Compared with PS-EG, the bound additionally involves the variation of the gradient operator $W$ across consecutive iterates, which is essential for controlling the optimistic correction term.

\begin{lemma}
	\label{lem:key_ogda}
	Let $\{\u_t\}_{t=1}^T,\{\u_{t-\frac{1}{2}}\}_{t=1}^T$ be the iterates generated by the PS-OGDA. For all $t=1,2,\cdots$ and $\u \in \U $ that are $\F_{t+\frac{1}{2}}$-measurable, we have 
	\begin{align*} 
	&\E[\|\u_{t+1}-\u\|^2]+3\eta_t^2 \E[\|W(\u_{t+\frac{1}{2}})-W(\u_{t-\frac{1}{2}})\|^2] \\
    \leq\ & \E[ \|\u_{t}-\u\|^2]+72\eta_{t-1}^2\eta_t^2\left(L+\tau_{t-1}\right)^2\E [\| W(\u_{t-\frac{1}{2}})-W(\u_{t-\frac{3}{2}})  \|^2]\\
    &
	-2\eta_t \E[\langle W(\u_{t+\frac{1}{2}}), \u_{t+\frac{1}{2}}-\u \rangle] -\left(1-24 \eta_t^2\left(L+\tau_{t-1}\right)^2\right) \E[ \| \u_t -\u_{t+\frac{1}{2}}\|^2]  \\
	&
	+12\left(\tau_{t}-\tau_{t-1}\right)^2\eta_t^2 \E[\|\u_{t-\frac{1}{2}}\|^2] + \left(144\eta_{t-1}^2\left(L+\tau_{t-1}\right)^2+6\right)\eta_t^2\sigma^2.
\end{align*}
\end{lemma}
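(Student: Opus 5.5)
The proof reuses the PS-EG argument, now applied to the optimistic update. Lemma~\ref{lem:proj_key} is invoked with
\[
(\x,\y_1,\y_2,\x_1^+,\x_2^+)\leftarrow(\u_t,\eta_t\hat W_{t-\frac12},\eta_t\hat W_{t+\frac12},\u_{t+\frac12},\u_{t+1})
\]
and $\p=\u$. This gives
\[
\|\u_{t+1}-\u\|^2\le\|\u_t-\u\|^2-2\eta_t\langle\hat W_{t+\frac12},\u_{t+\frac12}-\u\rangle+\eta_t^2\|\hat W_{t-\frac12}-\hat W_{t+\frac12}\|^2-\|\u_{t+\frac12}-\u_t\|^2 .
\]
Taking expectations and conditioning on $\F_{t+\frac12}$ turns the inner product into $\langle W(\u_{t+\frac12}),\u_{t+\frac12}-\u\rangle$, since $\u$ is $\F_{t+\frac12}$-measurable. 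What remains is to control $\eta_t^2\E\|\hat W_{t-\frac12}-\hat W_{t+\frac12}\|^2$.

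\textbf{Step 1: split the difference.} Write $\hat W_{t\pm\frac12}=W(\u_{t\pm\frac12})+\bm\zeta_{t\pm\frac12}$. Because $\tau_{t-\frac12}=\tau_{t-1}$ differs from $\tau_t$, the deterministic part splits into two pieces. The first is the same-parameter difference $W_{\tau_{t-1}}(\u_{t+\frac12})-W_{\tau_{t-1}}(\u_{t-\frac12})$. The second is the parameter-drift term $(\tau_t-\tau_{t-1})\u_{t-\frac12}$, or the analogous quantity, whichever point the perturbation is attached to. By Young's inequality with a factor of order $6$, this produces $12(\tau_t-\tau_{t-1})^2\eta_t^2\E\|\u_{t-\frac12}\|^2$ together with noise contributions $6\eta_t^2\sigma^2$.

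\textbf{Step 2: bound the same-parameter difference by Lipschitzness.} Use $\|W(\u_{t+\frac12})-W(\u_{t-\frac12})\|\le(L+\tau_{t-1})\big(\|\u_{t+\frac12}-\u_t\|+\|\u_t-\u_{t-\frac12}\|\big)$. The first piece gives the $24\eta_t^2(L+\tau_{t-1})^2\E\|\u_t-\u_{t+\frac12}\|^2$ term, which is absorbed into the negative quadratic term.

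The second piece, $\|\u_t-\u_{t-\frac12}\|$, is where the optimistic structure enters. By the update rules, $\u_t$ and $\u_{t-\frac12}$ are projections from the same point $\u_{t-1}$ with directions $\hat W_{t-\frac12}$ and $\hat W_{t-\frac32}$. Nonexpansiveness of $\proj_\U$ therefore gives
\[
\|\u_t-\u_{t-\frac12}\|\le\eta_{t-1}\|\hat W_{t-\frac12}-\hat W_{t-\frac32}\|.
\]
Splitting off the noise again bounds its square by $\eta_{t-1}^2\big(3\|W(\u_{t-\frac12})-W(\u_{t-\frac32})\|^2+6\sigma^2\big)$, again up to parameter-drift pieces. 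Multiplying by the prefactor $24\eta_t^2(L+\tau_{t-1})^2$ gives exactly the $72\eta_{t-1}^2\eta_t^2(L+\tau_{t-1})^2\E\|W(\u_{t-\frac12})-W(\u_{t-\frac32})\|^2$ term. It also gives the $144\eta_{t-1}^2(L+\tau_{t-1})^2\eta_t^2\sigma^2$ noise term.

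\textbf{Step 3: the left-hand potential term.} The extra $3\eta_t^2\E\|W(\u_{t+\frac12})-W(\u_{t-\frac12})\|^2$ on the left is obtained by bookkeeping. The quadratic term $\eta_t^2\E\|\hat W_{t-\frac12}-\hat W_{t+\frac12}\|^2$ is bounded in two ways. One way is via $\|a+b\|^2\le 3\|a\|^2+\cdots$ and then Lipschitzness as above, producing the right-hand terms. The other is to add $3\eta_t^2\|W(\u_{t+\frac12})-W(\u_{t-\frac12})\|^2$ to both sides and bound the doubled copy by Lipschitzness, which merely doubles the constants, giving $24$ and $72$. This sets up the telescoping potential used later.

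\textbf{Main obstacle.} The delicate part is matching the indices of $\tau$. The difference $W(\u_{t-\frac12})-W(\u_{t-\frac32})$ mixes $\tau_{t-1}$ and $\tau_{t-2}$, so I must ensure that the drift terms are either collected into the single $(\tau_t-\tau_{t-1})^2\|\u_{t-\frac12}\|^2$ term or consistently absorbed. Equally, the noise cross terms must be handled by conditioning on the right half-step filtrations so that they vanish or become $\sigma^2$ bounds. I would first try to define $W$ in the statement as evaluated with the parameter of the query time, and absorb the mismatch via monotonicity of $\tau_t$.
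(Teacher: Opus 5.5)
Your proposal is correct and follows the paper's proof essentially step for step. The shared steps are the projection inequality applied with $(\u_t,\eta_t\hat W_{t-\frac12},\eta_t\hat W_{t+\frac12},\u_{t+\frac12},\u_{t+1})$, the three-way split $\E\|\hat W_{t+\frac12}-\hat W_{t-\frac12}\|^2\le 3X+6\sigma^2$ with $X=\E\|W(\u_{t+\frac12})-W(\u_{t-\frac12})\|^2$, the nonexpansiveness bound $\|\u_t-\u_{t-\frac12}\|\le\eta_{t-1}\|\hat W_{t-\frac12}-\hat W_{t-\frac32}\|$, and writing $3\eta_t^2X=6\eta_t^2X-3\eta_t^2X$ and bounding only the doubled copy, which is exactly where $12,24,72,144$ come from.

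One small correction: the drift lands on $\u_{t-\frac12}$ only if the same-parameter difference is taken at $\tau_t$, after which you bound $L+\tau_t\le L+\tau_{t-1}$. Also, if $W(\u_s)$ is always evaluated at the query-time parameter (as the paper does), the $\hat W_{t-\frac12}-\hat W_{t-\frac32}$ step produces no drift pieces, so your index worry disappears.
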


\begin{proof} 
	Applying Lemma~\ref{lem:proj_key} with
	\[
	(\x,\y_1,\y_2,\x_1^+,\x_2^+) 
	\leftarrow 
	(\u_{t}, \eta_t \hat{W}_{t-\frac{1}{2}}, \eta_t \hat{W}_{t+\frac{1}{2}}, \u_{t+\frac{1}{2}}, \u_{t+1})
	\]
	and setting $\p=\u$, we obtain
	\begin{align*}
		\|\u_{t+1}-\u\|^2
		\leq \|\u_{t}-\u\|^2
		-2\eta_t \langle \hat{W}_{t+\frac{1}{2}}, \u_{t+\frac{1}{2}}-\u \rangle
		+\eta_t^2 \|\hat{W}_{t+\frac{1}{2}}-\hat{W}_{t-\frac{1}{2}}\|^2
		-\|\u_t-\u_{t+\frac{1}{2}}\|^2.
	\end{align*}
	Taking expectation on both sides yields
	\begin{align}\label{eq:main_constrained_ogda}
		\E[\|\u_{t+1}-\u\|^2]
		\leq\ & \E[ \|\u_{t}-\u\|^2]
		-2\eta_t \E[\langle \hat{W}_{t+\frac{1}{2}}, \u_{t+\frac{1}{2}}-\u \rangle]  + \eta_t^2 \E[\|\hat{W}_{t+\frac{1}{2}}-\hat{W}_{t-\frac{1}{2}}\|^2]\notag\\
		&- \E[\|\u_t-\u_{t+\frac{1}{2}}\|^2].
	\end{align}
	We next bound each term. Let $\mathcal{F}_t$ denote the filtration up to time $t$, and define $\E_t[\cdot] = \E[\cdot \mid \mathcal{F}_t]$.
	
	For the inner product term, by Assumption~\ref{ass:stoc}, we have
	\begin{align}\label{eq:inn_bd}
		\E[\langle \hat{W}_{t+\frac{1}{2}}, \u_{t+\frac{1}{2}}-\u \rangle]=\E[\E_{t+\frac{1}{2}}[\langle \hat{W}_{t+\frac{1}{2}}, \u_{t+\frac{1}{2}}-\u \rangle]]
		= \E[\langle W(\u_{t+\frac{1}{2}}), \u_{t+\frac{1}{2}}-\u \rangle].
	\end{align}
	
	For the quadratic term, we have
	\begin{align}\label{eq:diff_grad}
		\E[\|\hat{W}_{t+\frac{1}{2}}-\hat{W}_{t-\frac{1}{2}}\|^2]
		=\ & \E  [\|W(\u_{t+\frac{1}{2}})-W(\u_{t-\frac{1}{2}})+  \bm{\zeta}_{t+\frac{1}{2}}-\bm{\zeta}_{t-\frac{1}{2}}\|^2] \notag\\
		\leq\ & 3\E  [\|W(\u_{t+\frac{1}{2}})-W(\u_{t-\frac{1}{2}})\|^2 +  3\E[\E_{t+\frac{1}{2}}[\|\bm{\zeta}_{t+\frac{1}{2}}\|^2]]+3\E[\E_{t-\frac{1}{2}}[\|\bm{\zeta}_{t-\frac{1}{2}}\|^2]] \notag\\
		\leq\ &  3\E [\|W(\u_{t+\frac{1}{2}})-W(\u_{t-\frac{1}{2}})\|^2] +  6\sigma^2.
	\end{align}
	where we used $\|a+b+c\|^2 \le 3(\|a\|^2+\|b\|^2+\|c\|^2)$ and Assumption~\ref{ass:stoc}.
	It remains to control $\E[\|W(\u_{t+\frac{1}{2}})-W(\u_{t-\frac{1}{2}})\|^2]$. By Lipschitz continuity of $V$ and the fact that $\tau_{t-\frac{1}{2}}\geq \tau_{t+\frac{1}{2}}$, we have
	\begin{align*}
		&\E[\|W(\u_{t+\frac{1}{2}})-W(\u_{t-\frac{1}{2}})\|^2]\\
		=\ & 	\E[\|V(\u_{t+\frac{1}{2}})+\tau_{t+\frac{1}{2}}\u_{t+\frac{1}{2}}-V(\u_{t-\frac{1}{2}})-\tau_{t-\frac{1}{2}}\u_{t-\frac{1}{2}}\|^2]\\
		\leq\ & 2\left(L+\tau_{t-\frac{1}{2}}\right)^2 \E[\|\u_{t+\frac{1}{2}}-\u_{t-\frac{1}{2}}\|^2]+2\left(\tau_{t+\frac{1}{2}}-\tau_{t-\frac{1}{2}}\right)^2 \E[\|\u_{t-\frac{1}{2}}\|^2]\\
		\leq\ & 4\left(L+\tau_{t-\frac{1}{2}}\right)^2 \E[\|\u_{t+\frac{1}{2}}-\u_{t}\|^2]
		+  4\left(L+\tau_{t-\frac{1}{2}}\right)^2 \E[\|\u_{t}-\u_{t-\frac{1}{2}}\|^2]\\
        &+2\left(\tau_{t+\frac{1}{2}}-\tau_{t-\frac{1}{2}}\right)^2 \E[\|\u_{t-\frac{1}{2}}\|^2].
	\end{align*}
	 
	Using the update rule and non-expansiveness of projection, we further bound
	\[
	\E[\|\u_{t-\frac{1}{2}}-\u_{t}\|^2]
	\leq \eta_{t-1}^2 \E[\|\hat{W}_{t-\frac{1}{2}}-\hat{W}_{t-\frac{3}{2}}\|^2]\leq 3\eta_{t-1}^2\E [\| W(\u_{t-\frac{1}{2}})-W(\u_{t-\frac{3}{2}})  \|^2]+6\eta_{t-1}^2 \sigma^2.
	\]
	Therefore, we get 
	\begin{align*}
		&\E[\|W(\u_{t+\frac{1}{2}})-W(\u_{t-\frac{1}{2}})\|^2] \\
		\leq\ & 8\left(L+\tau_{t-\frac{1}{2}}\right)^2 \E[\|\u_{t+\frac{1}{2}}-\u_{t}\|^2]+24 \eta_{t-1}^2\left(L+\tau_{t-\frac{1}{2}}\right)^2 \E [\| W(\u_{t-\frac{1}{2}})-W(\u_{t-\frac{3}{2}})  \|^2]\\
		&+4\left(\tau_{t+\frac{1}{2}}-\tau_{t-\frac{1}{2}}\right)^2 \E[\|\u_{t-\frac{1}{2}}\|^2] 
	 - \E[\|W(\u_{t+\frac{1}{2}})-W(\u_{t-\frac{1}{2}})\|^2] +  48\eta_{t-1}^2\left(L+\tau_{t-\frac{1}{2}}\right)^2\sigma^2.
	\end{align*}
	This, together with the bounds~\eqref{eq:main_constrained_ogda},~\eqref{eq:inn_bd}, and~\eqref{eq:diff_grad} we obtain that 
	\begin{align*} 
		&\E[\|\u_{t+1}-\u\|^2]\\
		\leq\ & \E[ \|\u_{t}-\u\|^2]
		-2\eta_t \E[\langle W(\u_{t+\frac{1}{2}}), \u_{t+\frac{1}{2}}-\u \rangle] -\left(1-24 \eta_t^2\left(L+\tau_{t-\frac{1}{2}}\right)^2\right) \E[ \| \u_t -\u_{t+\frac{1}{2}}\|^2] \notag\\
		&+72\eta_{t-1}^2\eta_t^2\left(L+\tau_{t-\frac{1}{2}}\right)^2\E [\| W(\u_{t-\frac{1}{2}})-W(\u_{t-\frac{3}{2}})  \|^2]- 3\eta_t^2 \E[\|W(\u_{t+\frac{1}{2}})-W(\u_{t-\frac{1}{2}})\|^2]  \notag\\
		&+12\left(\tau_{t+\frac{1}{2}}-\tau_{t-\frac{1}{2}}\right)^2\eta_t^2 \E[\|\u_{t-\frac{1}{2}}\|^2] + \left(144\eta_{t-1}^2\left(L+\tau_{t-\frac{1}{2}}\right)^2+6\right)\eta_t^2\sigma^2.
	\end{align*}
	Rearranging it and using the fact that $\tau_t=\tau_{t+\frac{1}{2}}, \forall t\geq 0$ finishes the proof.  
\end{proof}

\subsubsection{Horizon-dependent Distance Convergence}
We begin with the horizon-dependent setting, in which the perturbation parameter remains fixed during the entire optimization process. Based on the recursive inequalities derived above, we establish a non-asymptotic last-iterate convergence guarantee for the squared distance to the perturbed saddle point.
  \begin{theorem}\label{thm:pert_rate}
 	Under the setting of Theorem~\ref{thm:rate_L2}, the last iterate generated by PS-EG/PS-OGDA satisfies
 	\begin{enumerate}[label=(\roman*)]
 		\item \textbf{Polynomial stepsize.} With the polynomial stepsize stated in Theorem~\ref{thm:rate_L2},  
 	 there exist constants $c,C>0$, independent of $T$, such that for all $T\ge 1+b$,
 	\[ \E[\|\u_T-\u^\star_\tau\|^2] 
 	\leq 
 	\E[\|\u_0-\u^\star_\tau\|^2]  
 	\exp \left(-cT^{1-\alpha-\delta}\right)
 	+
 	C T^{-\alpha+\delta}, 
 	\quad \text{for } \tfrac{1}{2}<\alpha<1-\delta,
 	\]
 	which implies that $\E[\|\u_T-\u_{\tau}^\star\|^2] \leq \mathcal{O}(T^{-\alpha+\delta}), 
 	\quad \text{for } \tfrac{1}{2}<\alpha<1-\delta$.
 
 	\item \textbf{Regularization-aware harmonic stepsize.} 	With the regularization-aware harmonic stepsize stated in Theorem~\ref{thm:rate_L2}, we have that there exist constants $C>0$, independent of $T$, such that for all $T\ge 1$,
 	\[
 	\E[\|\u_T-\u_\tau^\star\|^2]
 	\le
 	 \E[\|\u_0-\u_\tau^\star\|^2]\frac{Cb^2}{(T+b)^2}
 	+
\frac{9C\sigma^2T}{\tau^2(T+b)^2},
 	\]
 		which implies that $\E[\|\u_T-\u_{\tau}^\star\|^2] \leq \mathcal{O}(T^{-1/2})$.
 	\end{enumerate}
 \end{theorem}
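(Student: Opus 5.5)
Plan: plug $\u=\u_\tau^\star$ (deterministic, hence measurable) into the recursions of Lemma~\ref{lem:key_eg} and Lemma~\ref{lem:key_ogda} with $\tau_t\equiv\tau$. Then the $(\tau_t-\tau_{t-1})^2$ terms vanish and the constant-perturbation structure gives a clean one-step contraction. Write $W$ for the perturbed operator, which is $\tau$-strongly monotone, and let $\u_\tau^\star$ solve its VI. Then
\[
\langle W(\u_{t+\frac12}),\u_{t+\frac12}-\u_\tau^\star\rangle\ge\tau\|\u_{t+\frac12}-\u_\tau^\star\|^2 .
\]
We convert $\|\u_{t+\frac12}-\u_\tau^\star\|^2\ge\frac12\|\u_t-\u_\tau^\star\|^2-\|\u_{t+\frac12}-\u_t\|^2$. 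The resulting $+2\eta_t\tau\|\u_{t+\frac12}-\u_t\|^2$ is absorbed by the negative term $-(1-6\eta_t^2(L+\tau)^2)\|\u_{t+\frac12}-\u_t\|^2$, since $\eta_t\le\frac1{4\tau}$ and $\eta_t\le\frac{1}{\sqrt{48}(L+\tau)}$ make the coefficient nonpositive. This yields, for PS-EG,
\[
a_{t+1}\le(1-\eta_t\tau)a_t+6\eta_t^2\sigma^2,\qquad a_t:=\E\|\u_t-\u_\tau^\star\|^2 .
\]
For PS-OGDA we use the Lyapunov quantity $\Phi_t=a_t+3\eta_{t-1}^2\E\|W(\u_{t-\frac12})-W(\u_{t-\frac32})\|^2$. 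Since $72\eta_{t-1}^2\eta_t^2(L+\tau)^2\le 3\eta_{t-1}^2(1-\eta_t\tau)$ under the stepsize cap (using $\eta_t\le\eta_{t-1}$ and $24\eta_t^2(L+\tau)^2\le\frac12$), the same argument gives $\Phi_{t+1}\le(1-\eta_t\tau)\Phi_t+c\,\eta_t^2\sigma^2$.

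Part (i), polynomial stepsizes: unroll the recursion to get
\[
a_T\le a_0\prod_{s<T}(1-\eta_s\tau)+6\sigma^2\sum_{t<T}\eta_t^2\prod_{s=t+1}^{T-1}(1-\eta_s\tau).
\]
The first product is at most $\exp(-\tau\sum_{s<T}\eta_s)$. We have $\sum_{s<T}\eta_s\gtrsim T^{1-\alpha}$, using $\sum\eta_s^2<\infty$ only for initial constants, so this is $\exp(-cT^{1-\alpha-\delta})$. For the noise sum, split at $t=T/2$. For $t<T/2$, the tail product is at most $\exp(-c\tau T^{1-\alpha})$, times $\sum\eta_t^2<\infty$, which is super-polynomially small because $\alpha+\delta<1$. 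For $t\ge T/2$, bound $\eta_t^2\le\eta_{T/2}\,\eta_t$ and use $\sum_t\eta_t\prod_{s>t}(1-\eta_s\tau)\le 1/\tau$ (telescoping: $\eta_t\tau\prod_{s>t}(\cdot)=\prod_{s>t}(\cdot)-\prod_{s\ge t}(\cdot)$). This gives $\mathcal O(\eta_{T/2}/\tau)=\mathcal O(T^{-\alpha+\delta})$, and the exponential remainder is absorbed into this polynomial term, giving the claimed bound.

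Part (ii), harmonic stepsizes: now $\eta_t\tau=\frac{2}{t+b}$, so
\[
\prod_{s=t+1}^{T-1}\Bigl(1-\frac{2}{s+b}\Bigr)\le\frac{(t+b+1)^2}{(T+b)^2}
\]
up to a constant (compare with $\prod\frac{s+b-2}{s+b}$). Hence the first term is at most $a_0Cb^2/(T+b)^2$. The noise term is at most
\[
\frac{6\sigma^2}{\tau^2}\sum_{t<T}\frac{4}{(t+b)^2}\cdot\frac{C(t+b+1)^2}{(T+b)^2}\lesssim\frac{\sigma^2T}{\tau^2(T+b)^2}.
\]
With $\tau=T^{-1/4}$ and $b=\Theta(\tau^{-2})=\Theta(T^{1/2})$, we get $b^2/(T+b)^2=\mathcal O(T^{-1})$ and $T^{1/2}/T=T^{-1/2}$. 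Also $a_0\le 2\|\u_0\|^2+2\|\u^\star\|^2$ by Lemma~\ref{lem:per_sol}. Thus $\mathcal O(T^{-1/2})$ overall. The stepsize condition $\eta_0\le$ cap follows from $b\ge c_0\cdot 2(L+\tau)/\tau$.

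The main obstacle is the PS-OGDA case. There we must verify that the extra gradient-difference term telescopes into the Lyapunov function with a contraction factor no worse than $(1-\eta_t\tau)$, and handle the base case $t=0$, where $\u_{-\frac12}$ appears. I would first try absorbing by monotonicity of $\eta_t$ together with the explicit constants $48$ and $4$ in the stepsize cap. If the factor is off, I would settle for $(1-\eta_t\tau/2)$, which only changes $c$ and $C$.
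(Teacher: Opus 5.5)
Your proposal is correct and follows the paper's proof essentially step for step. It uses the same strong-monotonicity/Young step inserted into Lemmas~\ref{lem:key_eg} and~\ref{lem:key_ogda} with $\u=\u_\tau^\star$, and the same Lyapunov function for PS-OGDA with contraction factor $(1-\eta_t\tau)$. The check $24\eta_t^2(L+\tau)^2\le\frac12\le 1-\eta_t\tau$ goes through, so your fallback is never needed --- fortunately, since in part (ii) a factor $1-\frac{\eta_t\tau}{2}=1-\frac{1}{t+b}$ would cost a $\log T$ factor rather than only change constants. The rest is the same unroll-and-split-at-$T/2$ argument that the paper packages as Lemmas~\ref{lem:chung_mod} and~\ref{lem:harmonic_chung}.

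Two small differences are worth noting. Your telescoping bound $\sum_t\eta_t\prod_{s>t}(1-\eta_s\tau)\le 1/\tau$ is a slightly cleaner substitute for the paper's geometric-series estimate of the late-time noise. Your bound $\|\u_0-\u_\tau^\star\|^2\le 2\|\u_0\|^2+2\|\u^\star\|^2$ via Lemma~\ref{lem:per_sol} makes explicit that the initial term is uniform in $T$, which the paper leaves implicit.
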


\begin{proof} 
We establish the convergence rates of PS-EG and PS-OGDA based on the recursive inequalities in Lemmas~\ref{lem:key_eg} and~\ref{lem:key_ogda}.  

\begin{enumerate}[label=(\roman*)]
\item \textbf{PS-EG:}
By the $\tau$-strong monotonicity of $W$ and Young’s inequality, we have 
\begin{align*}
\langle W(\u_{t+\frac{1}{2}}), \u_{t+\frac{1}{2}}-\u_{\tau}^\star\rangle
\geq \tau\|\u_{t+\frac{1}{2}}-\u_{\tau}^\star\|^2
\geq \frac{\tau}{2}\|\u_t-\u_{\tau}^\star\|^2
- \tau\|\u_{t+\frac{1}{2}}-\u_t\|^2.
\end{align*}

Combining this with Lemma~\ref{lem:key_eg} (with $\u=\u_{\tau}^\star$), we obtain
\begin{align}\label{eq:eg_1}
&\E[\|\u_{t+1}-\u_{\tau}^\star\|^2] \notag\\
\leq\ & (1-\eta_t\tau)\E[\|\u_t-\u_{\tau}^\star\|^2]  
- (1-6\eta_t^2 (L+\tau)^2-2\eta_t\tau)\E[\|\u_{t+\frac{1}{2}}-\u_t\|^2]
+ 6\eta_t^2\sigma^2 \notag\\
\leq\ & (1-\eta_t\tau)\E[\|\u_t-\u_{\tau}^\star\|^2] + 6\eta_t^2\sigma^2,
\end{align}
where the last inequality follows from the stepsize condition 
$6\eta_t^2(L+\tau)^2\leq \tfrac{1}{2}$ and $2\eta_t\tau \leq \tfrac{1}{2}$.
\begin{enumerate}[label=(\roman*)] 
	\item \textbf{Polynomial stepsize:} If the stepsize satisfies $\eta_t = (t+b)^{-\alpha}$ with $\frac{1}{2} < \alpha < 1$ and $b\geq 1$, 
	and the perturbation parameter scales as $\tau = T^{-\delta}$ for some $\delta > 0$.
The resulting recursion is in the standard form of Lemma~\ref{lem:chung_mod}. By applying this lemma, we obtain that, there exist constants $c,C>0$, independent of $T$, such that for all $T\ge 1+b$,
\[
\E[\|\u_T-\u_{\tau}^\star\|^2] 
\leq 
\E[\|\u_0-\u_{\tau}^\star\|^2] 
\exp \left(-cT^{1-\alpha-\delta}\right)
+
C T^{-\alpha+\delta}, 
\quad \text{for } \tfrac{1}{2}<\alpha<1-\delta,
\]
which implies that $\E[\|\u_T-\u_{\tau}^\star\|^2] \leq \mathcal{O}(T^{-\alpha+\delta}), 
\quad \text{for } \tfrac{1}{2}<\alpha<1-\delta$.
\item  \textbf{Regularization-aware harmonic stepsize.}  If the stepsize satisfies $\eta_t=2/(\tau(t+b))$ with $b=\left\lceil c_0\max\left\{  2( L+\tau)/\tau,\tau^{-2}\right\}\right\rceil$ and $c_0>8$, and the perturbation parameter scales as $\tau = T^{-1/4}$.  With $\eta_t=2/(\tau(t+b))$, the resulting recursion is in the standard form of Lemma~\ref{lem:harmonic_chung}.  
By applying this lemma, we obtain that, there exist constants $C>0$, independent of $T$, such that for all $T\ge 1$,
\[
\E[\|\u_T-\u_\tau^\star\|^2]
\le
 \E[\|\u_0-\u_\tau^\star\|^2]\frac{Cb^2}{(T+b)^2}
+
\frac{6C\sigma^2T}{\tau^2(T+b)^2},
\]
	which implies that $\E[\|\u_T-\u_{\tau}^\star\|^2] \leq\mathcal{O}(T^{-1/2})$.
\end{enumerate}
 
\item \textbf{PS-OGDA:} 
By $\tau$-strong monotonicity and Young’s inequality, we have  
\begin{align*}
\langle W(\u_{t+\frac{1}{2}}), \u_{t+\frac{1}{2}}-\u_{\tau}^\star \rangle
\geq \tau\|\u_{t+\frac{1}{2}}-\u_{\tau}^\star\|^2
\geq \frac{\tau}{2}\|\u_{t}-\u_{\tau}^\star\|^2
- \tau\|\u_t-\u_{t+\frac{1}{2}}\|^2.
\end{align*}

Combining this with Lemma~\ref{lem:key_ogda}, we obtain
\begin{align} \label{eq:og_1}
      &\E[\|\u_{t+1}-\u_{\tau}^\star\|^2  +3\eta_t^2  \|W(\u_{t+\frac{1}{2}})-W(\u_{t-\frac{1}{2}})\|^2]  \notag\\
      \leq\ & \left(1-\eta_t \tau\right)\E[\|\u_{t}-\u_{\tau}^\star\|^2] +72\eta_{t-1}^2\eta_t^2\left( L+\tau\right)^2 \E [\| W(\u_{t-\frac{1}{2}})-W(\u_{t-\frac{3}{2}})  \|^2] \notag\\
      &-\left(1-24 \eta_t^2\left( L+\tau\right)^2-2\eta_t \tau\right) \E[ \| \u_t -\u_{t+\frac{1}{2}}\|^2]  + \left(144\eta_{t-1}^2\left( L+\tau\right)^2+6\right)\eta_t^2\sigma^2 \notag\\
      =\ &\left(1-\eta_t \tau\right)\left(\E[\|\u_{t}-\u_{\tau}^\star\|^2+3\eta_{t-1}^2  \| W(\u_{t-\frac{1}{2}})-W(\u_{t-\frac{3}{2}})  \|^2]\right) \notag\\
      &-3\eta_{t-1}^2\left(1-\eta_t\tau-24\eta_t^2\left(L+\tau\right)^2\right)\E[\| W(\u_{t-\frac{1}{2}})-W(\u_{t-\frac{3}{2}})  \|^2] \notag\\
      &-\left(1-24\eta_t^2\left( L+\tau\right)^2-2\eta_t \tau\right) \E[ \| \u_t -\u_{t+\frac{1}{2}}\|^2]+ \left(144\eta_{t-1}^2\left( L+\tau\right)^2+6\right)\eta_t^2\sigma^2 \notag\\
      \leq\ & \left(1-\eta_t \tau\right)\left(\E[\|\u_{t}-\u_{\tau}^\star\|^2+3\eta_{t-1}^2  \| W(\u_{t-\frac{1}{2}})-W(\u_{t-\frac{3}{2}})  \|^2]\right) + 9\eta_t^2\sigma^2,
\end{align}
where the last inequality uses the stepsize condition  
$24\eta_t^2 ( L+\tau)^2 \leq \tfrac{1}{2} \leq 1-2\eta_t \tau$.

To capture both the distance and the gradient variation, we define the Lyapunov function
\[
\Phi_t \coloneqq \E[\|\u_{t}-\u_{\tau}^\star\|^2  +3\eta_{t-1}^2  \|W(\u_{t-\frac{1}{2}})-W(\u_{t-\frac{3}{2}})\|^2].
\]
with $\u_{-\frac{3}{2}}=\u_{-\frac{1}{2}}$.
\begin{enumerate}
	\item \textbf{Polynomial stepsize:} If the stepsize satisfies $\eta_t = (t+b)^{-\alpha}$ with $\frac{1}{2} < \alpha < 1$ and $b\geq 1$, 
and the perturbation parameter scales as $\tau = T^{-\delta}$ for some $\delta > 0$.
The resulting recursion is in the standard form of Lemma~\ref{lem:chung_mod}. By applying this lemma, we obtain that, there exist constants $c,C>0$, independent of $T$, such that for all $T\ge 1+b$,
\[ \E[\Phi_T] 
\leq 
\E[\Phi_0]  
\exp \left(-cT^{1-\alpha-\delta}\right)
+
C T^{-\alpha+\delta}, 
\quad \text{for } \tfrac{1}{2}<\alpha<1-\delta,
\]
which implies that $\E[\|\u_T-\u_{\tau}^\star\|^2] \leq\mathcal{O}(T^{-\alpha+\delta}), 
\quad \text{for } \tfrac{1}{2}<\alpha<1-\delta$.
 
\item  \textbf{Regularization-aware harmonic stepsize.}  If the stepsize satisfies $\eta_t=2/(\tau(t+b))$ with $b=\left\lceil c_0\max\left\{ 2( L+\tau)/\tau,\tau^{-2}\right\}\right\rceil$ and $c_0>8$, and the perturbation parameter scales as $\tau = T^{-1/4}$.  With $\eta_t=2/(\tau(t+b))$, the resulting recursion is in the standard form of Lemma~\ref{lem:harmonic_chung}.  
By applying this lemma, we obtain that, there exist constants $C>0$, independent of $T$, such that for all $T\ge 1$,
	\[
\E[\Phi_T]
\le
 \E[\Phi_0]\frac{Cb^2}{(T+b)^2}
+
\frac{9C\sigma^2T}{\tau^2(T+b)^2},
\]
	which implies that $\E[\|\u_T-\u_{\tau}^\star\|^2] \leq \mathcal{O}(T^{-1/2})$.
\end{enumerate}
This finishes the proof.
\end{enumerate}
\end{proof}

\subsubsection{Anytime Distance Convergence}
We next consider the anytime setting, where the perturbation parameter decreases over time. In contrast to the horizon-dependent analysis, the perturbed saddle point is no longer fixed, and the proof must additionally control the drift of the moving center. Nevertheless, by combining the recursive inequalities with Lemma~\ref{lem:per_sol}, we obtain the following distance convergence guarantee.
 \begin{theorem}\label{thm:pert_any}
 	Under the setting of Theorem~\ref{thm:rate_any}, we have
 	\[ \E[\|\u_t-\u^\star_{\tau_t}\|^2] 
 	\leq  \mathcal{O}(t^{-2/5}), 
 	\quad \forall t\geq 1.
    \]
 \end{theorem}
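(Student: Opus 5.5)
Our plan is to run the same contraction argument as in Theorem~\ref{thm:pert_rate}, but now with respect to the moving center $\u^\star_{\tau_t}$, and to absorb the drift of the center with Lemma~\ref{lem:per_sol}. Throughout we write $D_t\coloneqq\E[\|\u_t-\u^\star_{\tau_t}\|^2]$; for PS-OGDA, $D_t$ is replaced by the Lyapunov quantity $\Phi_t$ used in Theorem~\ref{thm:pert_rate}, with $\u^\star_\tau$ replaced by $\u^\star_{\tau_t}$.

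\textbf{Step 1 (one-step recursion at a fixed $\tau_t$).} Apply Lemma~\ref{lem:key_eg} (resp.\ Lemma~\ref{lem:key_ogda}) with $\u=\u^\star_{\tau_t}$, which is deterministic and hence admissible. Use the $\tau_t$-strong monotonicity of $W$ together with Young's inequality exactly as in~\eqref{eq:eg_1} and~\eqref{eq:og_1}. The stepsize conditions $\eta_t\le 1/(96(L+\tau_{t-1}))$ and $\eta_t\le 1/(5\tau_t)$ let us drop the $\|\u_{t+\frac12}-\u_t\|^2$ terms. This gives
\[
\E[\|\u_{t+1}-\u^\star_{\tau_t}\|^2]\le(1-\eta_t\tau_t)D_t+c_1\eta_t^2\sigma^2+c_2(\tau_t-\tau_{t-1})^2\eta_t^2\,\E[\|\u_{t-\frac12}\|^2].
\]
The last term appears only for OGDA. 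We control it by bounding $\E\|\u_{t-\frac12}\|^2\le 2\E\|\u_{t-\frac12}-\u^\star\|^2+2\|\u^\star\|^2$ and invoking the uniform mean-square boundedness (Lemma~\ref{lem:bd_iter}). Alternatively, we use the condition $\eta_t\le\tau_t/(192(\tau_t-\tau_{t-1})^2)$, which turns the coefficient into $\mathcal O(\eta_t\tau_t)$ times a bounded quantity. Since $(\tau_t-\tau_{t-1})^2=\mathcal O(t^{-12/5})$, this term is of lower order in any case.

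\textbf{Step 2 (moving the center).} Write
\[
\|\u_{t+1}-\u^\star_{\tau_{t+1}}\|^2\le(1+\tfrac{\eta_t\tau_t}{2})\|\u_{t+1}-\u^\star_{\tau_t}\|^2+(1+\tfrac{2}{\eta_t\tau_t})\|\u^\star_{\tau_t}-\u^\star_{\tau_{t+1}}\|^2.
\]
By Lemma~\ref{lem:per_sol}(2), $\|\u^\star_{\tau_t}-\u^\star_{\tau_{t+1}}\|\le\frac{\tau_t-\tau_{t+1}}{\tau_t}\|\u^\star\|=\mathcal O(t^{-1})$, because $\tau_t-\tau_{t+1}\asymp t^{-6/5}$ and $\tau_t\asymp t^{-1/5}$. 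Combining with Step 1 yields
\[
D_{t+1}\le(1-\tfrac{\eta_t\tau_t}{2})D_t+\mathcal O(\eta_t^2)+\mathcal O\Big(\tfrac{t^{-2}}{\eta_t\tau_t}\Big).
\]
Here $\eta_t\tau_t=\eta_0\tau_0(t+b)^{-4/5}$, $\eta_t^2\asymp t^{-6/5}$, and $t^{-2}/(\eta_t\tau_t)\asymp t^{-6/5}$. Both error terms are therefore of order $t^{-6/5}$.

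\textbf{Step 3 (solving the recursion).} The recursion has the form $D_{t+1}\le(1-a(t+b)^{-4/5})D_t+K(t+b)^{-6/5}$ with $a=\eta_0\tau_0/2$. A Chung-type argument (Lemma~\ref{lem:chung_mod}, or a direct induction with hypothesis $D_t\le M(t+b)^{-2/5}$) gives $D_t=\mathcal O(t^{-2/5})$, since the error-to-contraction ratio is $t^{-6/5}/t^{-4/5}=t^{-2/5}$. For the induction, we need
\[
(1-a s^{-4/5})Ms^{-2/5}+Ks^{-6/5}\le M(s+1)^{-2/5},\qquad s=t+b.
\]
Expanding $(s+1)^{-2/5}\ge s^{-2/5}-\tfrac25 s^{-7/5}$, this reduces to $aM s^{-6/5}\ge Ks^{-6/5}+\tfrac25 Ms^{-7/5}$, which holds for $M\ge 2K/a$ and $b$ large. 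The base case follows from choosing $M\ge b^{2/5}D_0$.

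\textbf{Main obstacle.} We expect the OGDA case to be the hardest part. There the Lyapunov term $3\eta_{t-1}^2\|W(\u_{t-\frac12})-W(\u_{t-\frac32})\|^2$ must absorb the time-varying stepsize ratio $\eta_{t-1}^2/\eta_t^2$ and the $\tau_{t-1}$ versus $\tau_t$ mismatch. The recursion must also remain closed without circularly relying on the boundedness of $\E\|\u_{t-\frac12}\|^2$. We would first try to prove the mean-square bound from the same recursion using the crude fact $D_t\le 2\E\|\u_t-\u^\star\|^2+2\|\u^\star\|^2$. We would then verify that $\eta_{t-1}/\eta_t\le 1+\mathcal O(t^{-1})$ leaves the coefficient $1-\eta_t\tau_t/2$ intact for $b$ large.
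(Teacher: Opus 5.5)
Your PS-EG argument is correct and is the paper's own proof: Lemma~\ref{lem:key_eg} at $\u=\u^\star_{\tau_t}$, then the factor $1-\eta_t\tau_t$, then a Young split with weight proportional to $\eta_t\tau_t$, then Lemma~\ref{lem:per_sol}(2) for the drift. Two small fixes in Step~3. First, Lemma~\ref{lem:chung_mod} does not fit your recursion: it pairs a contraction $(t+b)^{-\nu}$ with an error $(t+b)^{-2\nu}$, and here $6/5\neq 8/5$. Use Lemma~\ref{lem:chung_new} instead, as the paper does. Second, $b$ is fixed by the hypotheses, so your direct induction should start at some $s_0\ge b$ with $s_0^{1/5}\ge 4/(5a)$, with the finitely many earlier steps absorbed into $M$.

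The genuine gap is in the PS-OGDA case, in the term $12(\tau_t-\tau_{t-1})^2\eta_t^2\,\E[\|\u_{t-\frac12}\|^2]$. Neither of your two options closes it.

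Option (a) cites Lemma~\ref{lem:bd_iter}. That lemma only bounds $\E[\|\u_t-\u^\star\|^2]$ at integer times, not half-iterates. More importantly, its proof for anytime PS-OGDA runs on the recursion~\eqref{eq:og_recur}. That recursion is derived inside this very proof, precisely by putting the $\E[\|\u_{t-\frac12}\|^2]$ term into the Lyapunov function. Using the lemma to obtain the recursion is therefore circular, as you suspect.

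Option (b) uses $\eta_t\le\tau_t/(192(\tau_t-\tau_{t-1})^2)$ to write the term as $\mathcal O(\eta_t\tau_t)$ times a constant. That creates an additive error of order $\eta_t\tau_t\asymp t^{-4/5}$, the same order as the contraction, so the recursion only yields $D_t=\mathcal O(1)$. The $192$-condition helps only if the term is absorbed multiplicatively into the contraction. Your proposed repair via $D_t\le 2\E[\|\u_t-\u^\star\|^2]+2\|\u^\star\|^2$ runs in the wrong direction and never relates $\u_{t-\frac12}$ to anything in the recursion.

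What is missing is an estimate of the half-iterate by quantities already in the recursion. The paper adds $\frac{12(\tau_{t+1}-\tau_t)^2\eta_{t+1}^2}{1-\eta_{t+1}\tau_{t+1}/2}\E[\|\u_{t+\frac12}\|^2]$ to both sides, making it a third component of $\Phi_{t+1}$. It then uses
$\E[\|\u_{t+\frac12}\|^2]\le 4\E[\|\u_t-\u^\star_{\tau_t}\|^2]+4\E[\|\u_{t+\frac12}-\u_t\|^2]+2\|\u^\star\|^2$.
\begin{itemize}
\item The first piece is absorbed into the contraction, because the $192$-condition gives $48(\tau_{t+1}-\tau_t)^2\eta_{t+1}^2\le\eta_{t+1}\tau_{t+1}/4\le\eta_t\tau_t/4$.
\item The second piece is absorbed by the negative term $-\E[\|\u_t-\u_{t+\frac12}\|^2]$. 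So you cannot discard that term entirely in Step~1, as you currently do.
\item What remains is an additive error of order $(\tau_{t+1}-\tau_t)^2\eta_{t+1}^2=\mathcal O(t^{-18/5})$.
\end{itemize}
Equivalently, you could use $\|\u_t-\u_{t-\frac12}\|\le\eta_{t-1}\|\hat W_{t-\frac12}-\hat W_{t-\frac32}\|$ to get $\E[\|\u_{t-\frac12}\|^2]\le 4\Phi_t+4\|\u^\star\|^2+12\eta_{t-1}^2\sigma^2$, and absorb it the same way.

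The stepsize-ratio difficulty you anticipate does not arise. Lemma~\ref{lem:key_ogda} carries $3\eta_t^2$ on the left and $72\eta_{t-1}^2\eta_t^2(L+\tau_{t-1})^2\le\tfrac34\eta_{t-1}^2$ on the right. Hence the weight $3\eta_{t-1}^2$ in $\Phi_t$ telescopes with no condition on $\eta_{t-1}/\eta_t$.
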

\begin{proof}
    We establish the convergence rate for PS-EG and PS-OGDA based on the recursive inequalities in Lemmas~\ref{lem:key_eg} and~\ref{lem:key_ogda}.

\begin{enumerate}[label=(\roman*)]
\item \textbf{PS-EG:}  
By the $\tau_{t+\frac{1}{2}}$-strong monotonicity of $W$ and Young’s inequality, we have 
\begin{align*}
	 \langle W(\u_{t+\frac{1}{2}}),
	\u_{t+\frac{1}{2}}-\u_{\tau_{t}}^\star\rangle 
	=\ & 	\langle W(\u_{t+\frac{1}{2}}),
	\u_{t+\frac{1}{2}}-\u_{\tau_{t+\frac{1}{2}}}^\star\rangle \geq
	\tau_{t}
	\|\u_{t+\frac{1}{2}}-\u_{\tau_{t+\frac{1}{2}}}^\star\|^2 \\
	\geq\ &
	\frac{\tau_{t}}{2}
	\|\u_t-\u_{\tau_{t}}^\star\|^2
	-\tau_{t }
	\|\u_{t+\frac{1}{2}}-\u_t\|^2  .
\end{align*}
Combining this bound with Lemma~\ref{lem:key_eg} (with $\u=\u_{\tau_t}^\star$), we obtain
\begin{align}\label{eq:mid_1}
&\E[\|\u_{t+1}-\u_{\tau_t}^\star\|^2] \notag\\
\leq\ & \left(1- \eta_t\tau_{t} \right)\E[\|\u_t-\u_{\tau_t}^\star\|^2]  - (1-6\eta_t^2 \left(L+\tau_t\right)^2-2\eta_t\tau_{t })\E[\|\u_{t+\frac{1}{2}}-\u_t\|^2] +6\eta_t^2\sigma^2 \notag \\
\leq\ & \left(1- \eta_t\tau_{t} \right)\E[\|\u_t-\u_{\tau_t}^\star\|^2] +6\eta_t^2\sigma^2 ,
\end{align}
where the last inequality follows from the stepsize condition 
$6\eta_t^2\left(L+\tau_t\right)^2\leq \tfrac{1}{2} \leq 1-2\eta_t\tau_t$. 

On the other hand, by Jensen's inequality, for any $\gamma_t\geq 0$, we have 
\begin{align}\label{eq:mid_2}
	\E[\|\u_{t+1}-\u_{\tau_{t+1}}^\star\|^2]\leq\ &\left(1+\gamma_t\right)\E[\|\u_{t+1}-\u_{\tau_t}^\star\|^2]+\left(1+\gamma_t^{-1}\right)\E[\|\u_{\tau_t}^\star-\u_{\tau_{t+1}}^\star\|^2].
\end{align}
Choose $\gamma_t = \frac{\eta_t \tau_t}{4}$. Since $0\leq\eta_t \tau_t\leq 1$, we have
\begin{align*}
    &\left(1+\gamma_t\right)\left(1-\eta_t\tau_t\right)=  \left(1+\frac{\eta_t \tau_t}{4}\right)\left(1-\eta_t\tau_t\right)\leq 1-\frac{\eta_t \tau_t}{2},\\
    & 1+\gamma_t^{-1}\leq \frac{5}{\eta_t\tau_t}.
\end{align*}
 Combining these bounds with~\eqref{eq:mid_1} and~\eqref{eq:mid_2} yields
 \begin{align}\label{eq:eg_recur}
     &\E[\|\u_{t+1}-\u_{\tau_{t+1}}^\star\|^2] \notag\\
\leq\ & \left(1- \frac{\eta_t\tau_{t} }{2}\right)\E[\|\u_{t}-\u_{\tau_t}^\star\|^2]   +12\eta_t^2\sigma^2 +\frac{5}{\eta_t\tau_t}\E[\|\u_{\tau_t}^\star-\u_{\tau_{t+1}}^\star\|^2]\notag\\
\leq\ & \left(1- \frac{\eta_t\tau_{t} }{2}\right)\E[\|\u_{t}-\u_{\tau_t}^\star\|^2]   +12\eta_t^2\sigma^2 +\frac{5(\tau_t-\tau_{t+1})^2}{\eta_t\tau_t^3} \|\u^\star\|^2 ,
 \end{align}
 where the last inequality follows from Lemma~\ref{lem:per_sol}.
If we choose $\eta_t =  \eta_0(t+b)^{-\alpha} $ and $\tau_t=\tau_0(t+b)^{-\beta}$ with $b>0, \alpha>\beta>0$, and $\alpha+\beta<1$. Moreover, $\eta_0,\tau_0$ is chosen such that $\eta_0\tau_0 (t+b)^{-\alpha-\beta}\leq 1$ for all $t\geq 1$. Then it satisfies the condition in Lemma~\ref{lem:chung_new} and applying the lemma gives that 
      \[
      \E[\|\u_t-\u_{\tau_t}^\star\|^2]\leq\mO(t^{-m}) \qquad m \coloneqq \min\{\alpha-\beta, 2\left(1-\alpha-\beta\right) \}.
      \] 
By choosing $\alpha=\tfrac{3}{5}$ and $\beta=\tfrac{1}{5}$ gives the conclusion.
\item \textbf{PS-OGDA:}
By the $\tau_{t+\frac{1}{2}}$-strong monotonicity of $W$ and Young’s inequality, we have  
\begin{align*}
\langle W(\u_{t+\frac{1}{2}}), \u_{t+\frac{1}{2}}-\u_{\tau_t}^\star \rangle
\geq \tau_t\|\u_{t+\frac{1}{2}}-\u_{\tau_t}^\star\|^2
\geq \frac{\tau_t}{2}\|\u_{t}-\u_{\tau_t}^\star\|^2
- \tau_t\|\u_t-\u_{t+\frac{1}{2}}\|^2.
\end{align*}
Combining this with Lemma~\ref{lem:key_ogda}, we obtain
\begin{align*}
    &\E[\|\u_{t+1}-\u_{\tau_t}^\star\|^2]+3\eta_t^2 \E[\|W(\u_{t+\frac{1}{2}})-W(\u_{t-\frac{1}{2}})\|^2] \\
    \leq\ &  \left(1-\eta_t \tau_t\right)\E[ \|\u_{t}-\u_{\tau_t}^\star\|^2]+72\eta_{t-1}^2\eta_t^2\left(L+\tau_{t-1}\right)^2\E [\| W(\u_{t-\frac{1}{2}})-W(\u_{t-\frac{3}{2}})  \|^2]\\
    &-\left(1-24 \eta_t^2\left(L+\tau_{t-1}\right)^2-2\eta_t \tau_t\right) \E[ \| \u_t -\u_{t+\frac{1}{2}}\|^2]  \\
	&
	+12\left(\tau_{t}-\tau_{t-1}\right)^2 \eta_t^2\E[\|\u_{t-\frac{1}{2}}\|^2] + \left(144\eta_{t-1}^2\left(L+\tau_{t-1}\right)^2+6\right)\eta_t^2\sigma^2.
\end{align*}
Rearranging it gives
\begin{align}\label{eq:rec_ogd}
    &\E[\|\u_{t+1}-\u_{\tau_t}^\star\|^2]+3\eta_t^2 \E[\|W(\u_{t+\frac{1}{2}})-W(\u_{t-\frac{1}{2}})\|^2]+\frac{12\left(\tau_{t+1}-\tau_{t}\right)^2\eta_{t+1}^2}{1-\frac{\eta_{t+1} \tau_{t+1}}{2}} \E[\|\u_{t+\frac{1}{2}}\|^2] \notag\\
    \leq\ &    \left(1-\eta_t \tau_t\right)\E[ \|\u_{t}-\u_{\tau_t}^\star\|^2]+72\eta_{t-1}^2\eta_t^2\left(L+\tau_{t-1}\right)^2\E [\| W(\u_{t-\frac{1}{2}})-W(\u_{t-\frac{3}{2}})  \|^2] \notag \\
    &-\left(1-24 \eta_t^2\left(L+\tau_{t-1}\right)^2-2\eta_t \tau_t\right) \E[ \| \u_t -\u_{t+\frac{1}{2}}\|^2]  
	+12\left(\tau_{t}-\tau_{t-1}\right)^2\eta_t^2 \E[\|\u_{t-\frac{1}{2}}\|^2] \notag \\
	&+\frac{12\left(\tau_{t+1}-\tau_{t}\right)^2\eta_{t+1}^2}{1-\frac{\eta_{t+1} \tau_{t+1}}{2}}  \E[\|\u_{t+\frac{1}{2}}\|^2]+ \left(144\eta_{t-1}^2\left(L+\tau_{t-1}\right)^2+6\right)\eta_t^2\sigma^2 \notag\\
   \leq\ &    \left(1-\eta_t \tau_t+\frac{48\left(\tau_{t+1}-\tau_{t}\right)^2\eta_{t+1}^2}{1-\frac{\eta_{t+1} \tau_{t+1}}{2}} \right)\E[ \|\u_{t}-\u_{\tau_t}^\star\|^2]\notag\\
   &+72\eta_{t-1}^2\eta_t^2\left(L+\tau_{t-1}\right)^2\E [\| W(\u_{t-\frac{1}{2}})-W(\u_{t-\frac{3}{2}})  \|^2]  +12\left(\tau_{t}-\tau_{t-1}\right)^2\eta_{t }^2 \E[\|\u_{t-\frac{1}{2}}\|^2]\notag \\
    &-\left(1-24 \eta_t^2\left(L+\tau_{t-1}\right)^2-2\eta_t \tau_t-\frac{48\left(\tau_{t+1}-\tau_{t}\right)^2\eta_{t+1}^2}{1-\frac{\eta_{t+1} \tau_{t+1}}{2}} \right) \E[ \| \u_t -\u_{t+\frac{1}{2}}\|^2]  \notag \\
	&+\frac{24\left(\tau_{t+1}-\tau_{t}\right)^2\eta_{t+1}^2}{1-\frac{\eta_{t+1} \tau_{t+1}}{2}}  \|\u_{\tau_t}^\star\|^2+ \left(144\eta_{t-1}^2\left(L+\tau_{t-1}\right)^2+6\right)\eta_t^2\sigma^2 ,
\end{align}
where the second inequality is derived by using the AM-GM inequality twice:
\begin{align}\label{eq:bd_iter_t_to_t+1}
    \E[\|\u_{t+\frac{1}{2}}\|^2] \leq\ & 2\E[\|\u_{t+\frac{1}{2}}-\u_{\tau_t}^\star\|^2]+2\|\u_{\tau_t}^\star\|^2 \notag\\
    \leq\ & 4\E[\|\u_{t}-\u_{\tau_t}^\star\|^2]+ 4\E[\|\u_{t+\frac{1}{2}}-\u_{t}\|^2]+2\|\u_{\tau_t}^\star\|^2.
\end{align}
Before analyzing the recursion, we first simplify the coefficient. We assume that  $\eta_{t}\leq \min\left\{\tfrac{\tau_t}{192\left(\tau_{t}-\tau_{t-1}\right)^2}, \tfrac{1}{96 \left(L+\tau_{t-1}\right) },\tfrac{1}{5\tau_t}\right\}$ holds for all $t\geq 0$. Then, we have
\begin{align}\label{eq:mid_3}
    &\E[\|\u_{t+1}-\u_{\tau_t}^\star\|^2]+3\eta_t^2 \E[\|W(\u_{t+\frac{1}{2}})-W(\u_{t-\frac{1}{2}})\|^2]+\frac{12\left(\tau_{t+1}-\tau_{t}\right)^2\eta_{t+1}^2}{1-\frac{\eta_{t+1} \tau_{t+1}}{2}} \E[\|\u_{t+\frac{1}{2}}\|^2] \notag\\
   \leq\ &    \left(1-\frac{\eta_t \tau_t}{2}\right)\E[ \|\u_{t}-\u_{\tau_t}^\star\|^2]+\frac{3\eta_{t-1}^2}{4}\E [\| W(\u_{t-\frac{1}{2}})-W(\u_{t-\frac{3}{2}})  \|^2] \notag \\
    &+12\left(\tau_{t}-\tau_{t-1}\right)^2 \eta_{t}^2\E[\|\u_{t-\frac{1}{2}}\|^2]-\frac{1}{4} \E[ \| \u_t -\u_{t+\frac{1}{2}}\|^2]  +48\left(\tau_{t+1}-\tau_{t}\right)^2\eta_{t+1}^2 \|\u_{\tau_t}^\star\|^2+ 8\eta_t^2\sigma^2 \notag\\
    \leq\ & \left(1-\frac{\eta_t \tau_t}{2}\right)\left(\E[ \|\u_{t}-\u_{\tau_t}^\star\|^2]+ 3\eta_{t-1}^2 \E [\| W(\u_{t-\frac{1}{2}})-W(\u_{t-\frac{3}{2}})  \|^2]\right.\notag\\
    &\left.+\frac{12\left(\tau_{t}-\tau_{t-1}\right)^2\eta_{t}^2}{1-\frac{\eta_t\tau_t}{2}} \E[\|\u_{t-\frac{1}{2}}\|^2] \right) -\frac{3\eta_{t-1}^2}{4}\left(1-2\eta_t\tau_t\right)\E [\| W(\u_{t-\frac{1}{2}})-W(\u_{t-\frac{3}{2}})  \|^2]\notag \\
    & -\frac{1}{4} \E[ \| \u_t -\u_{t+\frac{1}{2}}\|^2]   +48\left(\tau_{t+1}-\tau_{t}\right)^2\eta_{t+1}^2 \|\u_{\tau_t}^\star\|^2+ 8\eta_t^2\sigma^2 \notag\\
    \leq\ &\left(1-\frac{\eta_t \tau_t}{2}\right)\left(\E[ \|\u_{t}-\u_{\tau_t}^\star\|^2]+ 3\eta_{t-1}^2 \E [\| W(\u_{t-\frac{1}{2}})-W(\u_{t-\frac{3}{2}})  \|^2]\right.\notag\\
    &\left.+\frac{12\left(\tau_{t}-\tau_{t-1}\right)^2\eta_{t }^2}{1-\frac{\eta_t\tau_t}{2}} \E[\|\u_{t-\frac{1}{2}}\|^2] \right) +48\left(\tau_{t+1}-\tau_{t}\right)^2 \eta_{t+1}^2\|\u^\star\|^2+ 8\eta_t^2\sigma^2 .
\end{align}
On the other hand, by Jensen's inequality and Lemma~\ref{lem:per_sol}, we have
 \begin{align}\label{eq:mid_4}
	\E[\|\u_{t+1}-\u_{\tau_{t+1}}^\star\|^2]\leq\ &\left(1+\frac{\eta_t \tau_t}{4}\right)\E[\|\u_{t+1}-\u_{\tau_t}^\star\|^2]+\left(1+\frac{4}{  \eta_t\tau_t}\right)\E[\|\u_{\tau_t}^\star-\u_{\tau_{t+1}}^\star\|^2] \notag\\
    \leq\ &\left(1+\frac{\eta_t \tau_t}{4}\right)\E[\|\u_{t+1}-\u_{\tau_t}^\star\|^2]+\frac{5\left(\tau_t-\tau_{t+1}\right)^2}{\eta_t\tau_t^3} \|\u^\star\|^2.
\end{align}
 Combining the bounds~\eqref{eq:mid_3} and~\eqref{eq:mid_4} yields
\begin{align}\label{eq:og_recur}
    &\E[\|\u_{t+1}-\u_{\tau_{t+1}}^\star\|^2]+3\eta_t^2 \E[\|W(\u_{t+\frac{1}{2}})-W(\u_{t-\frac{1}{2}})\|^2]+\frac{12\left(\tau_{t+1}-\tau_{t}\right)^2\eta_{t+1}^2}{1-\frac{\eta_{t+1} \tau_{t+1}}{2}} \E[\|\u_{t+\frac{1}{2}}\|^2] \notag\\
    \leq\ &  \left(1- \frac{\eta_t\tau_{t} }{4}\right) \left(\E[ \|\u_{t}-\u_{\tau_t}^\star\|^2]+ 3\eta_{t-1}^2 \E [\| W(\u_{t-\frac{1}{2}})-W(\u_{t-\frac{3}{2}})  \|^2]\right.\notag\\
    &\left.+\frac{12\left(\tau_{t}-\tau_{t-1}\right)^2\eta_{t }^2}{1-\frac{\eta_t\tau_t}{2}} \E[\|\u_{t-\frac{1}{2}}\|^2] \right)  +\frac{5(\tau_t-\tau_{t+1})^2}{\eta_t\tau_t^3} \|\u^\star\|^2  \notag\\
    &+ 96\left(\tau_{t+1}-\tau_{t}\right)^2\eta_{t+1}^2 \|\u^\star\|^2+ 16\eta_t^2\sigma^2.
\end{align}
If we choose $\eta_t =  \eta_0(t+b)^{-\alpha} $ and $\tau_t=\tau_0(t+b)^{-\beta}$ with $b>0, \alpha>\beta>0$, and $\alpha+\beta<1$. Moreover, $\eta_0,\tau_0$ is chosen such that $\eta_0\tau_0 (t+b)^{-\alpha-\beta}\leq 1$ for all $t\geq 1$. Define the Lyapunov function
\[
\Phi_t \coloneqq \E[ \|\u_{t}-\u_{\tau_t}^\star\|^2]+ 3\eta_{t-1}^2 \E [\| W(\u_{t-\frac{1}{2}})-W(\u_{t-\frac{3}{2}})  \|^2]+\frac{12\left(\tau_{t}-\tau_{t-1}\right)^2\eta_{t}^2}{1-\frac{\eta_t\tau_t}{2}} \E[\|\u_{t-\frac{1}{2}}\|^2].
\]
Then we have 
\[
\Phi_{t+1}\leq \left(1-\frac{\eta_t \tau_t}{4}\right)\Phi_t+\frac{5(\tau_t-\tau_{t+1})^2}{\eta_t\tau_t^3} \|\u^\star\|^2 + 96\left(\tau_{t+1}-\tau_{t}\right)^2\eta_{t+1}^2 \|\u^\star\|^2+ 16\eta_t^2\sigma^2.
\]
Then it satisfies the condition in Lemma~\ref{lem:chung_new} and applying the lemma gives that 
      \[
      \E[\|\u_t-\u_{\tau_t}^\star\|^2]\leq \Phi_t\leq\mO(t^{-m}) \qquad m \coloneqq \min\{\alpha-\beta, 2\left(1-\alpha-\beta\right) \}. 
      \] 
By choosing $\alpha=\tfrac{3}{5}$ and $\beta=\tfrac{1}{5}$, we finish the proof.
\end{enumerate} 
\end{proof} 
 
\subsection{From Distance to Stationarity}\label{sec:dist_stat}
 While the above result establishes convergence to the perturbed solution $\u_\tau^\star$, it does not directly characterize optimality for the original problem. 
To bridge this gap, we relate the distance $\E[\|\u - \u_{\tau}^\star\|^2]$ to the restricted primal--dual gap or gradient norm.

A key step is to ensure that the restricted primal-dual gap is well-defined. Since the feasible set may be unbounded, we cannot directly work with the primal--dual gap. Instead, we show that the iterates remain uniformly bounded in mean square, which allows us to restrict attention, in expectation, to a compact region containing a saddle point.
\begin{lemma}[Uniform mean-square boundedness of iterates] \label{lem:bd_iter}
Let $\{\u_t\}_{t=1}^T$ and $\{\u_{t-\frac{1}{2}}\}_{t=1}^T$ be the sequences generated by either PS-EG or PS-OGDA. 
Suppose the assumptions of either Theorem~\ref{thm:pert_rate} or Theorem~\ref{thm:pert_any} hold. Then there exists a positive constant $C$ which depends on $\|\u_0-\u^\star\|^2$, $\|\u_0\|^2$, $\sigma$, and parameters $\eta_0, \tau_0$, such that 
\[
\sup_{t \geq 0} \E[\|\u_t - \u^\star\|^2] \leq C. 
\]
\end{lemma}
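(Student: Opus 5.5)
The plan is to compare $\u_t$ with the perturbed saddle point and to bound the perturbed distance by summing the one-step recursions from Section~\ref{sec:distance}. By the triangle inequality and Lemma~\ref{lem:per_sol}(1),
\[
\E[\|\u_t-\u^\star\|^2]\le 2\E[\|\u_t-\u^\star_{\tau_t}\|^2]+2\|\u^\star_{\tau_t}-\u^\star\|^2\le 2\E[\|\u_t-\u^\star_{\tau_t}\|^2]+8\|\u^\star\|^2,
\]
where in the horizon-dependent case $\tau_t\equiv\tau$. So it suffices to bound $\E[\|\u_t-\u^\star_{\tau_t}\|^2]$, or the corresponding Lyapunov function $\Phi_t$ for PS-OGDA, uniformly in $t$ and, in the horizon-dependent case, uniformly in $T$.

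\emph{Horizon-dependent case.} I would start from \eqref{eq:eg_1} for PS-EG and \eqref{eq:og_1} for PS-OGDA. In both, I discard the contraction by using $1-\eta_t\tau\le 1$. This gives $\Phi_{t+1}\le \Phi_t+9\eta_t^2\sigma^2$, and telescoping yields
\[
\E[\|\u_t-\u^\star_\tau\|^2]\le \Phi_0+9\sigma^2\sum_{s\ge0}\eta_s^2.
\]
For PS-OGDA the convention $\u_{-\frac32}=\u_{-\frac12}$ gives $\Phi_0=\|\u_0-\u_\tau^\star\|^2$, which is at most $2\|\u_0\|^2+2\|\u^\star\|^2$ by Lemma~\ref{lem:per_sol}(1). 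The remaining point is that $\sum_s\eta_s^2$ is bounded independently of $T$. For the polynomial schedule this holds because $\sum_s(s+b)^{-2\alpha}<\infty$ with $2\alpha>1$, and the bound does not depend on $\tau$. For the harmonic schedule,
\[
\sum_s\eta_s^2=\frac{4}{\tau^2}\sum_s\frac{1}{(s+b)^2}\le \frac{4}{\tau^2(b-1)},
\]
which is $\mathcal O(1)$ because $b\ge c_0\tau^{-2}$, as already noted in Remark~\ref{rmk:gap}.

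\emph{Anytime case.} I would use \eqref{eq:eg_recur} for PS-EG and \eqref{eq:og_recur} for PS-OGDA. Again dropping the factor $(1-\eta_t\tau_t/4)\le1$ and telescoping gives
\[
\Phi_t\le \Phi_0+\sum_{s\ge0}\Big(\frac{5(\tau_s-\tau_{s+1})^2}{\eta_s\tau_s^3}+96(\tau_{s+1}-\tau_s)^2\eta_{s+1}^2\Big)\|\u^\star\|^2+16\sigma^2\sum_{s\ge0}\eta_s^2 .
\]
With $\eta_s=\eta_0(s+b)^{-3/5}$ and $\tau_s=\tau_0(s+b)^{-1/5}$, the mean value theorem gives $\tau_s-\tau_{s+1}\le \tfrac{\tau_0}{5}(s+b)^{-6/5}$. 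The drift term then scales like $(s+b)^{-12/5}/(s+b)^{-6/5}=(s+b)^{-6/5}$, which is summable. The terms $\eta_s^2\sim(s+b)^{-6/5}$ and $(\tau_{s+1}-\tau_s)^2\eta_{s+1}^2$ are also summable. Hence every series converges to a constant depending only on $\eta_0,\tau_0,b,\sigma,\|\u^\star\|$. For PS-OGDA, $\Phi_0$ additionally contains a term proportional to $\|\u_{-\frac12}\|^2$; with the natural initialization $\u_{-\frac12}=\u_0$ this is controlled by $\|\u_0\|^2$, which explains the stated dependence on $\|\u_0\|^2$.

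The main obstacle I expect is ensuring uniformity in $T$ in the horizon-dependent case, since $\tau$ depends on $T$. The argument above avoids this because discarding the contraction leaves only $\sum\eta_s^2$, and the offset $b=\Theta(\tau^{-2})$ is exactly what keeps that sum bounded. I would also check that the discarded terms in the PS-OGDA Lyapunov function are nonnegative, so that $\E[\|\u_t-\u^\star_{\tau_t}\|^2]\le\Phi_t$ holds throughout. Finally, the statement as written also covers $\u_{t-\frac12}$; its bound follows from $\E\|\u_{t+\frac12}-\u_t\|^2\le 2\eta_t^2(\E\|W(\u_t)\|^2+\sigma^2)$ together with the Lipschitz continuity of $W$ and the bound already obtained for $\u_t$.
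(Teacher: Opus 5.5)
Your proposal is correct and follows essentially the same route as the paper: drop the contraction factors in \eqref{eq:eg_1}, \eqref{eq:og_1}, \eqref{eq:eg_recur}, \eqref{eq:og_recur}, telescope using the summability of $\sum_t\eta_t^2$ and of the drift terms, and conclude with the triangle inequality and Lemma~\ref{lem:per_sol}. Your explicit bound $\sum_s\eta_s^2\le 4/(\tau^2(b-1))=\mathcal O(1)$ under $b\ge c_0\tau^{-2}$ makes the uniformity in $T$ more transparent than the paper's proof does. The closing remark on half-iterates is not needed, since the stated bound concerns only $\u_t$. As written, that remark also fits PS-EG but not PS-OGDA, whose half-step uses $\hat{W}_{t-\frac12}$ rather than $\hat{W}_t$.
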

Lemma~\ref{lem:bd_iter} ensures that the iterates remain uniformly bounded in mean square throughout the optimization process. Based on this bound, we define the compact set 
$\mathcal{B}_{\x} \coloneqq \X\cap \mathcal{B}(\x^\star, \sqrt{C}), \mathcal{B}_{\y} \coloneqq  \Y\cap \mathcal{B}(\y^\star, \sqrt{C})$, which contain the saddle point $\u^\star$. We are now ready to establish the key conversion result, which relates the distance to the perturbed saddle point to the restricted primal--dual gap of the original problem. 

\begin{lemma}[Distance-to-gap conversion]\label{lem:relation}
Let Assumptions~\ref{ass:cc}, \ref{ass:l_smooth}, \ref{ass:exist}, and~\ref{ass:stoc} hold.  
Suppose $\u = (\x, \y) \in \U$ satisfies $\E[\|\u - \u_\tau^\star\|^2] \leq \epsilon$. Then there exists a finite constant $M_R>0$, depending only on $F$, $\u^\star$, and a compact neighborhood
$\mathcal{B}_{\x}\times\mathcal{B}_{\y}$, where
$\mathcal{B}_{\x}$ and $\mathcal{B}_{\y}$ are centered at
$\x^\star$ and $\y^\star$, respectively, such that
\[
\E[\mathcal{G}^R(\u)]
\leq M_R\sqrt{\epsilon} + \frac{L\epsilon }{2} + \frac{\tau}{2}\left(2\diam(\mathcal{B}_{\x})^2+2\diam(\mathcal{B}_{\y})^2+4\|\u^\star\|^2\right).
\]
In particular,
\[
\E[\mathcal{G}^R(\u)]\le \mathcal{O}(\sqrt{\epsilon}+\tau).
\]
\end{lemma}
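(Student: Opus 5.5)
The plan is to bound the gap at $\u$ by the gap at the perturbed saddle point $\u_\tau^\star$, plus a correction that is linear and quadratic in $\|\u-\u_\tau^\star\|$. Write $\mathcal{G}^R(\u)=\max_{\y'\in\mathcal B_{\y}}F(\x,\y')-\min_{\x'\in\mathcal B_{\x}}F(\x',\y)$.

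\textbf{Step 1 (gap at $\u_\tau^\star$).} Since $\u_\tau^\star=(\x_\tau^\star,\y_\tau^\star)$ is a saddle point of $F_\tau$ over $\X\times\Y$, for every $\x'\in\mathcal B_{\x}$ and $\y'\in\mathcal B_{\y}$ we have $F_\tau(\x_\tau^\star,\y')\le F_\tau(\x_\tau^\star,\y_\tau^\star)\le F_\tau(\x',\y_\tau^\star)$. Unfolding the quadratic terms gives
\[
F(\x_\tau^\star,\y')-F(\x',\y_\tau^\star)\le \tfrac{\tau}{2}\left(\|\x'\|^2+\|\y'\|^2-\|\x_\tau^\star\|^2-\|\y_\tau^\star\|^2\right)\le \tfrac{\tau}{2}\left(\|\x'\|^2+\|\y'\|^2\right).
\]
Now $\|\x'\|\le\|\x'-\x^\star\|+\|\x^\star\|\le \diam(\mathcal B_{\x})+\|\x^\star\|$, and similarly for $\y'$, using $\x^\star\in\mathcal B_{\x}$ and $\y^\star\in\mathcal B_{\y}$. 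The inequality $(a+b)^2\le 2a^2+2b^2$ then gives the bound $\tfrac{\tau}{2}(2\diam(\mathcal B_{\x})^2+2\diam(\mathcal B_{\y})^2+2\|\u^\star\|^2)$. This is within the stated constant $4\|\u^\star\|^2$. Hence $\mathcal G^R(\u_\tau^\star)$ is bounded by the $\tau$-term of the claim.

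\textbf{Step 2 (Lipschitz-type perturbation from $\u_\tau^\star$ to $\u$).} For fixed $\y'$, write $F(\x,\y')-F(\x_\tau^\star,\y')\le \langle\nabla_{\x}F(\x_\tau^\star,\y'),\x-\x_\tau^\star\rangle+\tfrac L2\|\x-\x_\tau^\star\|^2$ using $L$-smoothness. Do the same for $-F(\x',\y)+F(\x',\y_\tau^\star)$ using concavity/smoothness in $\y$. Adding the two and maximizing over the compact sets yields
\[
\mathcal G^R(\u)\le \mathcal G^R(\u_\tau^\star)+M_R\|\u-\u_\tau^\star\|+\tfrac L2\|\u-\u_\tau^\star\|^2,
\]
where $M_R:=\sup\{\|\nabla_{\x}F(\x_\tau^\star,\y')\|+\|\nabla_{\y}F(\x',\y_\tau^\star)\|\}$ over $\x'\in\mathcal B_{\x}$, $\y'\in\mathcal B_{\y}$.

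\textbf{Step 3 (uniform constant and expectation).} To make $M_R$ independent of $\tau$, I will use Lemma~\ref{lem:per_sol}: $\|\u_\tau^\star\|\le\|\u^\star\|$, so $\u_\tau^\star$ lies in the fixed ball $\mathcal B(0,\|\u^\star\|)$. By Lipschitz continuity of $\nabla F$, the gradient norms are then bounded by $\|V(\u^\star)\|+L(\cdots)$, with $(\cdots)$ depending only on $\|\u^\star\|$ and the diameters. This yields a finite $M_R$ depending only on $F$, $\u^\star$, and $\mathcal B_{\x}\times\mathcal B_{\y}$. Finally, take expectations and apply Jensen's inequality, $\E\|\u-\u_\tau^\star\|\le\sqrt{\epsilon}$, to obtain the claim.

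The main obstacle I expect is making $M_R$ uniform in $\tau$. Step 3 handles this via the norm bound in Lemma~\ref{lem:per_sol}, so the remaining care is bookkeeping the constants in Step 1.
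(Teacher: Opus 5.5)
Your proof is correct and follows essentially the same route as the paper. In both, the gap splits into an approximation part, bounded by the saddle-point inequalities of $F_\tau$ at $\u_\tau^\star$ (this gives the $\tau$-term), and an optimization part, bounded by $L$-smoothness along $\u-\u_\tau^\star$, followed by Cauchy--Schwarz/Jensen in expectation. The paper phrases this through the specific maximizer $\y(\x)$ and minimizer $\x(\y)$ rather than by comparing with $\mathcal{G}^R(\u_\tau^\star)$, which is only a cosmetic difference.

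Your Step~3 is slightly more careful than the paper. The paper justifies $M_R$ by compactness of $\mathcal{B}_{\x}\times\mathcal{B}_{\y}$, even though the gradients are evaluated at points involving $\u_\tau^\star$, which need not lie in that set. Your use of $\|\u_\tau^\star\|\le\|\u^\star\|$ makes $M_R$ explicitly independent of $\tau$, which is exactly what the $\mathcal{O}(\sqrt{\epsilon}+\tau)$ conclusion needs.
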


\begin{proof}
We first define the optimal solution of $F$ within the restricted sets:
\[
 \y(\x) = \argmax_{\y' \in \mathcal{B}_{\y}} F(\x,\y') \quad \text{and} \quad \x(\y) = \argmin_{\x' \in \mathcal{B}_{\x}} F(\x',\y).
\]
Thus, the restricted gap evaluates to exact equality as $\mathcal{G}^R(\u) = F(\x, \y(\x)) - F(\x(\y), \y)$. To provide an upper bound for $\mathcal{G}^R$, we decompose it into four parts and then bound it separately.
\begin{align*}
    \mathcal{G}^R(\u) =\ & \underbrace{F(\x, \y(\x)) - F(\x_{\tau}^\star, \y(\x))}_
    {\textrm{optimization\, error}}  +  \underbrace{F(\x_{\tau}^\star, \y(\x)) - F(\x_{\tau}^\star, \y_{\tau}^\star) + F(\x_{\tau}^\star, \y_{\tau}^\star) - F(\x(\y), \y_{\tau}^\star)}_{\textrm{approximation\, error}}\\
    &+ \underbrace{F(\x(\y), \y_{\tau}^\star) - F(\x(\y), \y)}_{\textrm{optimization\, error}}.
\end{align*}

To bound the optimization error, by $L$-smoothness of $F$, we have
 \begin{align*} F(\x, \y(\x)) - F(\x_{\tau}^\star, \y(\x)) &\leq \langle \nabla_{\x} F(\x_{\tau}^\star, \y(\x)), \x - \x_{\tau}^\star \rangle + \frac{L}{2}\|\x - \x_{\tau}^\star\|^2 \\ F(\x(\y), \y_{\tau}^\star) - F(\x(\y), \y) &\leq \langle -\nabla_{\y} F(\x(\y), \y_{\tau}^\star), \y - \y_{\tau}^\star \rangle + \frac{L}{2}\|\y - \y_{\tau}^\star\|^2 \end{align*}
Let $V_{\mathcal{B}}(\u) = [\nabla_{\x} F(\x_{\tau}^\star, \y(\x)); -\nabla_{\y} F(\x(\y), \y_{\tau}^\star)]$. Then we can bound the optimization error:
\begin{align*}
    \text{optimization error}\leq \langle V_{\mathcal{B}}(\u), \u - \u_{\tau}^\star \rangle + \frac{L}{2}\|\u - \u_{\tau}^\star\|^2 . 
\end{align*}

Next, we bound the approximation error. Noting that $\u_{\tau}^\star$ is the saddle point of the perturbed function $F_{\tau}(\x,\y)=F(\x,\y)+\tfrac{\tau}{2}\|\x\|^2-\tfrac{\tau}{2}\|\y\|^2$, it satisfies $F_{\tau}(\x_{\tau}^\star, \y) \leq F_{\tau}(\x_{\tau}^\star, \y_{\tau}^\star) \leq F_{\tau}(\x, \y_{\tau}^\star)$ for all $\x\in \X,\y\in \Y$. Therefore, we have 
\begin{align*}
    F (\x_{\tau}^\star, \y(\x)) - F (\x_{\tau}^\star, \y_{\tau}^\star) = \ & \left(F_{\tau}(\x_{\tau}^\star, \y(\x))+\frac{\tau}{2}\|\y(\x)\|^2\right) - \left(F_{\tau}(\x_{\tau}^\star, \y_{\tau}^\star) +\frac{\tau}{2}\|\y_{\tau}^\star\|^2\right)\\
    \leq\ & \frac{\tau}{2}\|\y(\x)\|^2 \\ 
    F(\x_{\tau}^\star, \y_{\tau}^\star) - F(\x(\y), \y_{\tau}^\star)=\ &\left(F_{\tau}(\x_{\tau}^\star, \y_{\tau}^\star)-\frac{\tau}{2}\|\x_{\tau}^\star\|^2\right) - \left(F_{\tau}(\x(\y), \y_{\tau}^\star)-\frac{\tau}{2}\|\x(\y)\|^2 \right)\\  \leq\ & \frac{\tau}{2}\|\x(\y)\|^2.
\end{align*}
Summing these, together with the bound for optimization error, we have
\begin{align*}
   \E[\mathcal{G}^R(\u)]\leq\ & \E[\langle V_{\mathcal{B}}(\u), \u - \u_{\tau}^\star \rangle] +\frac{L}{2}\E[\|\u - \u_{\tau}^\star\|^2]+\frac{\tau}{2}\E[\left( \|\y(\x)\|^2+ \|\x(\y)\|^2\right)] \\
   \leq\ & \sqrt{\E[\| V_{\mathcal{B}}(\u)\|^2]}\sqrt{\E[\|\u - \u_{\tau}^\star\|^2]}+\frac{L}{2}\E[\|\u - \u_{\tau}^\star\|^2]+\frac{\tau}{2}\E[\left( \|\y(\x)\|^2+ \|\x(\y)\|^2\right)],
\end{align*}
where the second inequality follows from the Cauchy-Schwarz inequality. Since $\mathcal{B}_{\x}\times\mathcal{B}_{\y}$ is compact and $F$ is continuously differentiable, there exists a constant $M_R>0$ such that $\E[\| V_{\mathcal{B}}(\u)\|^2]\leq M_R^2$. Therefore, we get  
\begin{align*}
     \E[\mathcal{G}^R(\u)] \leq M_R\sqrt{\epsilon} + \frac{L\epsilon }{2} + \frac{\tau}{2}\left(2\diam(\mathcal{B}_{\x})^2+2\diam(\mathcal{B}_{\y})^2+4\|\u^\star\|^2\right) \leq \mO(\sqrt{\epsilon}+\tau).
\end{align*}
This completes the proof.
\end{proof} 
 In the unconstrained setting where $\U=\R^n\times\R^d$, the distance estimate can be further translated into a convergence guarantee for the norm of the gradient operator. The following lemma establishes this relationship. 

\begin{lemma}[Distance-to-stationarity conversion] \label{lem:rela_sta}Let Assumptions~\ref{ass:cc}, \ref{ass:l_smooth}, \ref{ass:exist}, and~\ref{ass:stoc} hold.  
Suppose $\u = (\x, \y) \in \R^n\times\R^d$ satisfies $\E[\|\u - \u_\tau^\star\|^2] \leq \epsilon$. Then we have 
\[
\E[\|V(\u)\|]\le \mathcal{O}(\sqrt{\epsilon}+\tau).
\]
\end{lemma}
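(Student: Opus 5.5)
In the unconstrained setting the perturbed saddle point $\u_\tau^\star$ is characterized by the first-order condition $W(\u_\tau^\star)=V(\u_\tau^\star)+\tau\u_\tau^\star=0$, that is, $V(\u_\tau^\star)=-\tau\u_\tau^\star$. This gives a direct handle on $V$ at the reference point. The plan is to compare $V(\u)$ with $V(\u_\tau^\star)$ using Lipschitz continuity, and then to use this identity to control $\|V(\u_\tau^\star)\|$.

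First, Assumption~\ref{ass:l_smooth} makes $V$ Lipschitz, with constant $2L$ under the blockwise form stated there (the exact constant is immaterial). By the triangle inequality,
\[
\|V(\u)\|\le \|V(\u)-V(\u_\tau^\star)\|+\|V(\u_\tau^\star)\|\le 2L\|\u-\u_\tau^\star\|+\tau\|\u_\tau^\star\|.
\]
Second, Lemma~\ref{lem:per_sol}(1) gives $\|\u_\tau^\star\|\le\|\u^\star\|$, so the second term is at most $\tau\|\u^\star\|$, a deterministic quantity. Third, taking expectations and applying Jensen's inequality, $\E[\|\u-\u_\tau^\star\|]\le\sqrt{\E[\|\u-\u_\tau^\star\|^2]}\le\sqrt{\epsilon}$. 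Together these yield
\[
\E[\|V(\u)\|]\le 2L\sqrt{\epsilon}+\tau\|\u^\star\|=\mathcal O(\sqrt{\epsilon}+\tau).
\]

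There is no genuine obstacle. The only point to check is that $\u_\tau^\star$ exists, is unique, and satisfies $W(\u_\tau^\star)=0$ when $\U=\R^{n+d}$. This holds because $W$ is continuous and $\tau$-strongly monotone, and strongly monotone continuous operators on the whole space have a unique zero. Unlike Lemma~\ref{lem:relation}, this argument needs neither the compact sets $\mathcal B_{\x},\mathcal B_{\y}$ nor the mean-square boundedness from Lemma~\ref{lem:bd_iter}.
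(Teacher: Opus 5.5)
Your proposal is correct and follows essentially the same route as the paper: both use $V(\u_\tau^\star)=-\tau\u_\tau^\star$, Lipschitz continuity of $V$, Jensen's inequality, and Lemma~\ref{lem:per_sol}(1) to replace $\|\u_\tau^\star\|$ by $\|\u^\star\|$. The only difference is cosmetic. The paper squares first, using $\|a+b\|^2\le 2\|a\|^2+2\|b\|^2$, and applies Jensen to $\|V(\u)\|$. You use the triangle inequality directly and apply Jensen to $\|\u-\u_\tau^\star\|$, which yields the same $\mathcal{O}(\sqrt{\epsilon}+\tau)$ bound.
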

\begin{proof}  
By the optimality condition of $\u_\tau^\star$, we have
\[
V(\u_\tau^\star) = -\tau \u_\tau^\star.
\]
Using Lipschitz continuity of $V$ (Assumption~\ref{ass:l_smooth}), we obtain
\begin{align*}
\|V(\u)\|^2 
&\leq 2\|V(\u)-V(\u_\tau^\star)\|^2 + 2\|V(\u_\tau^\star)\|^2  \leq 4L^2\|\u-\u_\tau^\star\|^2 + 2\tau^2\|\u_\tau^\star\|^2.
\end{align*}
Taking expectation and applying Jensen's inequality, we get
\begin{align*}
    \E[\|V(\u)\|]
\leq \sqrt{\E[\|V(\u)\|^2]}
\leq\ & 2L\sqrt{\E[\|\u-\u_\tau^\star\|^2]} + \sqrt{2}\tau\|\u_\tau^\star\|\\
\leq\ & 2L\sqrt{\E[\|\u-\u_\tau^\star\|^2]} + \sqrt{2}\tau\|\u^\star\|,
\end{align*}
where the last inequality follows from Lemma~\ref{lem:per_sol}.
Therefore, if $\E[\|\u-\u_\tau^\star\|^2]\leq \epsilon$, then we have
\[
\E[\|V(\u)\|] \leq \mathcal{O}(\sqrt{\epsilon}+\tau).
\]
This completes the proof.
\end{proof}

\subsection{Proofs of the Theorems~\ref{thm:rate_L2} and~\ref{thm:rate_any}}
\label{sec:proved_all}
We are now ready to complete the proofs of the main convergence results by combining the distance estimates established in this section with the conversion lemmas developed above.

\begin{proof}[Proof of Theorem~\ref{thm:rate_L2}]
We first establish the convergence rate for the restricted primal--dual gap.

\paragraph{Polynomial stepsize.}
By Theorem~\ref{thm:pert_rate}, we have
\[
\epsilon_T
\coloneqq 
\E \left[\|\u_T-\u_\tau^\star\|^2\right]
=
\mathcal{O} \left(T^{-\alpha+\delta}\right),
\]
where $\tau=T^{-\delta}$.
Applying Lemma~\ref{lem:relation} yields
\[
\E[\mathcal G^R(\u_T)]
=
\mathcal{O} \left(
\sqrt{\epsilon_T}
+\tau
\right)
=
\mathcal{O} \left(
T^{-(\alpha-\delta)/2}
+
T^{-\delta}
\right).
\]
Choosing $\delta=\frac{\alpha}{3}$ 
balances the two terms and gives
\[
\E[\mathcal G^R(\u_T)]
=
\mathcal{O} \left(T^{-\alpha/3}\right).
\]
Since $\alpha<3/4$, taking $\alpha=3/4-3\varepsilon$ for any $\varepsilon>0$
yields
\[
\E[\mathcal G^R(\u_T)]
=
\mathcal{O} \left(T^{-1/4+\varepsilon}\right).
\]

\paragraph{Regularization-aware harmonic stepsize.}
By Theorem~\ref{thm:pert_rate},
\[
\epsilon_T
=
\mathcal{O}(T^{-1/2}).
\]
With $\tau=T^{-1/4}$, Lemma~\ref{lem:relation} implies
\[
\E[\mathcal G^R(\u_T)]
=
\mathcal{O} \left(
T^{-1/4}
\right).
\]

Finally, in the unconstrained setting, the gradient norm estimates follow immediately from Lemma~\ref{lem:rela_sta} by substituting the above distance bounds. This completes the proof.
\end{proof}
\begin{proof}[Proof of Theorem~\ref{thm:rate_any}]
By Theorem~\ref{thm:pert_any}, the anytime algorithm satisfies
\[
\epsilon_t
\coloneqq 
\E \left[\|\u_t-\u_{\tau_t}^\star\|^2\right]
=
\mathcal{O}(t^{-2/5}),
\]
where $\tau_t
=
\mathcal{O}(t^{-1/5}).$ 
Applying Lemma~\ref{lem:relation}, we obtain
\[
\E[\mathcal G^R(\u_t)]
=
\mathcal{O} \left(
\sqrt{\epsilon_t}
+
\tau_t
\right)
=
\mathcal{O}(t^{-1/5}).
\]
Moreover, in the unconstrained setting, Lemma~\ref{lem:rela_sta} further gives
\[
\E[\|V(\u_t)\|]
=
\mathcal{O}(t^{-1/5}).
\]
This completes the proof.
\end{proof}
\subsection{A Sharper Unconstrained Analysis}
 \label{sec:improved}

In this subsection, we prove Theorem~\ref{thm:rate_any_uncon_eg}. The proof is specific to the unconstrained setting and does not proceed through the moving perturbed saddle point $\u_{\tau_t}^\star$. Instead, we compare the stochastic trajectory with a deterministic reference trajectory generated by the same time-varying perturbed EG maps.

We first record a deterministic contraction property of the unconstrained EG map.

\begin{lemma}[Contraction of the unconstrained EG map]
\label{lem:eg_map_contraction}
Let $W:\mathbb R^{n+d}\to\mathbb R^{n+d}$ be $\mu$-strongly monotone and $M$-Lipschitz continuous, where $0<\mu\le M$. For $\eta>0$, define
\[
\mathcal T_{\eta}(\u)
\coloneqq
\u-\eta W(\u-\eta W(\u)).
\]
If $0<\eta\le \frac{\mu}{16M^2},$ 
then, for all $\u,\v\in\mathbb R^{n+d}$,
\[
\|\mathcal T_{\eta}(\u)-\mathcal T_{\eta}(\v)\|^2
\le
(1-\eta\mu)\|\u-\v\|^2.
\]
Moreover, for all $\u\in\mathbb R^{n+d}$,
\[
\|W(\mathcal T_{\eta}(\u))\|
\le
\left(1-\frac{\eta\mu}{2}\right)\|W(\u)\|.
\]
\end{lemma}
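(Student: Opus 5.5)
Both claims follow from a direct computation on the EG map. Fix $\u,\v$ and write $\bar\u=\u-\eta W(\u)$, $\bar\v=\v-\eta W(\v)$, $\Delta=\u-\v$, $\bar\Delta=\bar\u-\bar\v$, $g=W(\bar\u)-W(\bar\v)$. Then
\[
\|\mathcal T_\eta(\u)-\mathcal T_\eta(\v)\|^2=\|\Delta\|^2-2\eta\langle g,\Delta\rangle+\eta^2\|g\|^2 .
\]
We split $\langle g,\Delta\rangle=\langle g,\bar\Delta\rangle+\eta\langle g,W(\u)-W(\v)\rangle$. Strong monotonicity gives $\langle g,\bar\Delta\rangle\ge\mu\|\bar\Delta\|^2$. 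For the remainder we use Lipschitz continuity: $|\eta\langle g,W(\u)-W(\v)\rangle|\le \eta M^2\|\bar\Delta\|\|\Delta\|$ and $\|g\|\le M\|\bar\Delta\|$. It also helps to record
\[
(1-\eta M)\|\Delta\|\le\|\bar\Delta\|\le(1+\eta M)\|\Delta\|,
\]
which is useful because $\eta M\le \mu/(16M)\le 1/16$.

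Plugging in,
\[
\|\mathcal T_\eta(\u)-\mathcal T_\eta(\v)\|^2\le\|\Delta\|^2-2\eta\mu\|\bar\Delta\|^2+2\eta^2M^2\|\bar\Delta\|\|\Delta\|+\eta^2M^2\|\bar\Delta\|^2 .
\]
Using $\|\bar\Delta\|\ge(15/16)\|\Delta\|$ and $\|\bar\Delta\|\le(17/16)\|\Delta\|$, the negative term is at least $2\eta\mu(15/16)^2\|\Delta\|^2\approx1.76\,\eta\mu\|\Delta\|^2$. The positive terms are at most $\eta^2M^2(2\cdot17/16+(17/16)^2)\|\Delta\|^2\le 3.3\,\eta^2M^2\|\Delta\|^2$. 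The step size condition $\eta M^2\le\mu/16$ turns this into at most $0.21\,\eta\mu\|\Delta\|^2$. Hence the net contraction factor is at most $1-1.5\,\eta\mu\le1-\eta\mu$. This constant bookkeeping is the only real care point; the margin is comfortable.

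For the second claim, apply the same idea to $W$-values instead of iterates. Write $\u^+=\mathcal T_\eta(\u)$ and $\bar\u=\u-\eta W(\u)$, so $\u^+=\u-\eta W(\bar\u)$. Then $\|\u^+-\bar\u\|=\eta\|W(\bar\u)-W(\u)\|\le\eta^2M\|W(\u)\|$. This gives
\[
\|W(\u^+)\|\le\|W(\bar\u)\|+\eta^2M^2\|W(\u)\| .
\]
It remains to bound $\|W(\bar\u)\|$, i.e., one gradient step measured in $W$. Strong monotonicity gives $\langle W(\bar\u)-W(\u),\bar\u-\u\rangle\ge\mu\|\bar\u-\u\|^2$, i.e. $\langle W(\bar\u)-W(\u),W(\u)\rangle\le-\mu\eta\|W(\u)\|^2$. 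Expanding $\|W(\bar\u)\|^2=\|W(\u)\|^2+2\langle W(\bar\u)-W(\u),W(\u)\rangle+\|W(\bar\u)-W(\u)\|^2$ then yields
\[
\|W(\bar\u)\|^2\le(1-2\eta\mu+\eta^2M^2)\|W(\u)\|^2\le(1-\tfrac{31}{16}\eta\mu)\|W(\u)\|^2 ,
\]
so $\|W(\bar\u)\|\le(1-\tfrac{31}{32}\eta\mu)\|W(\u)\|$. Adding the error $\eta^2M^2\le\eta\mu/16$ gives a total factor $1-(\tfrac{31}{32}-\tfrac1{16})\eta\mu\le1-\eta\mu/2$. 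This completes both bounds.
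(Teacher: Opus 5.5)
Your proposal is correct and follows essentially the same route as the paper. For the contraction you split $\langle g,\Delta\rangle$ through the extrapolated points and use strong monotonicity plus Lipschitz bounds; for the residual you bound one gradient step in $W$-norm by $(1-\tfrac{31}{32}\eta\mu)$ and add the $\eta^2M^2\le \eta\mu/16$ correction, exactly as the paper does. The only difference is bookkeeping: you keep $\|\bar\Delta\|$ and convert with $(1\pm\eta M)\|\Delta\|$ at the end. Your cross term also correctly carries the factor $M^2$, whereas the paper's displayed bound $2\eta^2M(1+\eta M)$ drops an $M$, although its final constant $13/8$ matches the $M^2$ version.
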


\begin{proof} Fix $\u\in\mathbb R^{n+d}$ and define
\[
\widetilde{\u}\coloneqq \u-\eta W(\u),
\qquad
\u^+\coloneqq \mathcal T_{\eta}(\u)
=
\u-\eta W(\widetilde{\u}).
\]
Firstly, by the $M$-Lipschitz continuous property of $W$, we have 
\begin{align*}
&\|\mathcal T_{\eta}(\u)-\mathcal T_{\eta}(\v)\|^2\\
=\ & \|\u-\v\|^2+\eta^2 \| W(\widetilde{\u}) - W(\widetilde{\v})\|^2-2\eta \langle \u-\v, W(\widetilde{\u}) - W(\widetilde{\v}) \rangle\\
\leq\ &\|\u-\v\|^2+\eta^2 M^2\left(1+\eta M\right)^2\|\u-\v\|^2-\underbrace{2\eta \langle \u-\v, W(\widetilde{\u}) - W(\widetilde{\v}) \rangle}_{\O1}.
\end{align*}
Then, we focus on the term \O1, using the $\mu$-strongly monotonicity of $W$, we have 
\begin{align*}
   \O1
   =\ &2\eta \langle \widetilde{\u}-\widetilde{\v},W(\widetilde{\u}) - W(\widetilde{\v})\rangle + 2\eta^2 \langle W(\u)-W(\v),W(\widetilde{\u}) - W(\widetilde{\v})\rangle\\
   \geq\ & 2\eta \mu \|\widetilde{\u}-\widetilde{\v}\|^2  -  2\eta^2 M\left(1+\eta M\right) \|\u-\v\|^2\\
   \geq\ & 2\eta \mu \left(1-\eta M\right)^2 \|\u-\v\|^2-  2\eta^2 M\left(1+\eta M\right) \|\u-\v\|^2\\
   =\ & 2\eta \left(  \mu \left(1-\eta M\right)^2- \eta  M\left(1+\eta M\right)\right) \|\u-\v\|^2\geq \frac{13 \mu\eta}{8} \|\u-\v\|^2.
\end{align*}
where the first two inequalities follow from $M$-Lipschitz continuous property fo $W$. Combining these two bounds, we obtain that 
\begin{align*}
    \|\mathcal T_{\eta}(\u)-\mathcal T_{\eta}(\v)\|^2 
    \leq\ & \left(1+\eta^2 M^2\left(1+\eta M\right)^2 -\frac{13 \mu\eta}{8})\right) \|\u-\v\|^2\\
    \leq\ & \left(
1-\frac{13}{8}\eta\mu
+
\frac{289}{256}\eta^2M^2
\right)\|\u-\v\|^2 \\
 \le\ &
(1-\eta\mu)\|\u-\v\|^2,
\end{align*}
where the last inequality again uses $\eta M^2\le \frac{\mu}{16}$.
  
We next prove the residual contraction.  
By strong monotonicity and Lipschitz continuity,
\begin{align*}
\|W(\widetilde{\u})\|^2
=\ &
\|W(\u)\|^2
-
2\langle W(\u),W(\u)-W(\widetilde{\u})\rangle
+
\|W(\u)-W(\widetilde{\u})\|^2  \\
=\ & \|W(\u)\|^2
-
 \frac{2}{\eta}\langle \u-\widetilde{\u} ,W(\u)-W(\widetilde{\u})\rangle
+
\|W(\u)-W(\widetilde{\u})\|^2  \\
\leq\ & 
\|W(\u)\|^2
-
2\eta\mu\|W(\u)\|^2
+
\eta^2M^2\|W(\u)\|^2  \\
\leq\ & 
\left(1-\frac{31}{16}\eta\mu\right)\|W(\u)\|^2.
\end{align*}
Since $\eta\mu\le \frac{1}{16}$, this implies
\[
\|W(\widetilde{\u})\|
\le
\left(1-\frac{31}{32}\eta\mu\right)\|W(\u)\|.
\]
Furthermore,
\begin{align*}
\|W(\u^+)-W(\widetilde{\u})\|
\leq   
M\|\u^+-\widetilde{\u}\|  =
\eta M\|W(\u)-W(\widetilde{\u})\|   \leq
\eta^2M^2\|W(\u)\|  \leq
\frac{\eta\mu}{16}\|W(\u)\|.
\end{align*}
Combining the last two displays gives
\[
\|W(\u^+)\|
\le
\left(
1-\frac{31}{32}\eta\mu+\frac{1}{16}\eta\mu
\right)\|W(\u)\|
\le
\left(1-\frac{\eta\mu}{2}\right)\|W(\u)\|.
\]
This proves the claim.
\end{proof}

We next introduce the deterministic reference trajectory. Let $\bar{\u}_0=\u_0$ and define
\[
\bar{\u}_{t+\frac12}
=
\bar{\u}_t-\eta_t W(\bar{\u}_t),
\qquad
\bar{\u}_{t+1}
=
\bar{\u}_t-\eta_t W(\bar{\u}_{t+\frac12}).
\]
with $\tau_t=\tau_{t+\frac{1}{2}}$.
\begin{lemma}[Bounded reference trajectory and residual decay]
\label{lem:det_ref_uncon}
Under the assumptions of Theorem~\ref{thm:rate_any_uncon_eg}, there exists a finite constant $B>0$ such that
\[
\sup_{t\ge0}\|\bar{\u}_t\|\le B.
\]
Moreover,
\[
\|W(\bar{\u}_t)\|=\mO(t^{-1/4}),
\qquad
\|V(\bar{\u}_t)\|^2=\mO(t^{-1/2}).
\]
\end{lemma}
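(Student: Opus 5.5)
The plan is to exploit the two contraction properties of Lemma~\ref{lem:eg_map_contraction}, applied to the time-$t$ operator $W_t(\u)\coloneqq V(\u)+\tau_t\u$. This operator is $\tau_t$-strongly monotone and $(2L+\tau_t)$-Lipschitz; the $2L$ accounts for the blockwise Lipschitz constants in Assumption~\ref{ass:l_smooth}. The stepsize condition $\eta_t\le \tau_t/(16(2L+\tau_t)^2)$ in Theorem~\ref{thm:rate_any_uncon_eg} is exactly the hypothesis of Lemma~\ref{lem:eg_map_contraction} with $\mu=\tau_t$ and $M=2L+\tau_t$. The reference update is $\bar{\u}_{t+1}=\mathcal T_{\eta_t}(\bar{\u}_t)$ for this map, since $\tau_{t+\frac12}=\tau_t$. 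Throughout I use that $\eta_t\tau_t=\eta_0\tau_0/(t+b)$ and that $\tau_t-\tau_{t+1}=\mO((t+b)^{-5/4})$.

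\emph{Boundedness.} In the unconstrained case $W_t(\u^\star_{\tau_t})=0$, so $\u^\star_{\tau_t}$ is a fixed point of $\mathcal T_{\eta_t}$. The first part of Lemma~\ref{lem:eg_map_contraction} then gives
\[
\|\bar{\u}_{t+1}-\u^\star_{\tau_t}\|\le \sqrt{1-\eta_t\tau_t}\,\|\bar{\u}_t-\u^\star_{\tau_t}\|\le\left(1-\tfrac{\eta_t\tau_t}{2}\right)\|\bar{\u}_t-\u^\star_{\tau_t}\|.
\]
Setting $d_t\coloneqq\|\bar{\u}_t-\u^\star_{\tau_t}\|$, the triangle inequality and Lemma~\ref{lem:per_sol}(2) give
\[
d_{t+1}\le\left(1-\tfrac{a}{t+b}\right)d_t+\tfrac{\tau_t-\tau_{t+1}}{\tau_t}\|\u^\star\|,
\]
with $a=\eta_0\tau_0/2$. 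Since $(\tau_t-\tau_{t+1})/\tau_t\le \tfrac{1}{4(t+b)}$, an induction shows $d_t\le D\coloneqq\max\{d_0,\|\u^\star\|/(4a)\}$: if $d_t\le D$, then $(1-\tfrac{a}{t+b})D+\tfrac{\|\u^\star\|}{4(t+b)}\le D$. Combining this with $\|\u^\star_{\tau_t}\|\le\|\u^\star\|$ from Lemma~\ref{lem:per_sol}(1) gives $\sup_t\|\bar{\u}_t\|\le B\coloneqq D+\|\u^\star\|$.

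\emph{Residual decay.} Let $r_t\coloneqq\|W_t(\bar{\u}_t)\|$. The second part of Lemma~\ref{lem:eg_map_contraction} gives $\|W_t(\bar{\u}_{t+1})\|\le(1-\tfrac{a}{t+b})r_t$. Switching operators costs $\|W_{t+1}(\u)-W_t(\u)\|=(\tau_t-\tau_{t+1})\|\u\|$, so by the boundedness step
\[
r_{t+1}\le\left(1-\tfrac{a}{t+b}\right)r_t+K(t+b)^{-5/4},\qquad K=\mO(\tau_0 B).
\]
Since $\eta_0\tau_0>1$, we have $a>\tfrac12>\tfrac14$. I would prove $r_t\le R(t+b)^{-1/4}$ by induction. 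The inductive step requires
\[
\left(1-\tfrac{a}{s}\right)Rs^{-1/4}+Ks^{-5/4}\le R(s+1)^{-1/4},\qquad s=t+b.
\]
Because $(s+1)^{-1/4}\ge s^{-1/4}\bigl(1-\tfrac{1}{4s}\bigr)$, this reduces to $R(a-\tfrac14)\ge K$. It therefore holds for $R\ge\max\{r_0 b^{1/4},K/(a-\tfrac14)\}$, which gives $\|W_t(\bar{\u}_t)\|=\mO(t^{-1/4})$.

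Finally, $V(\bar{\u}_t)=W_t(\bar{\u}_t)-\tau_t\bar{\u}_t$, so $\|V(\bar{\u}_t)\|\le r_t+\tau_0(t+b)^{-1/4}B=\mO(t^{-1/4})$, and squaring gives $\|V(\bar{\u}_t)\|^2=\mO(t^{-1/2})$. The step I expect to be most delicate is the residual recursion. There one must keep track of which operator each term is evaluated with ($W_t$ versus $W_{t+1}$), and the Chung-type induction works only because $a>1/4$, which is where the hypothesis $\eta_0\tau_0>1$ enters. The boundedness step is a prerequisite, since it is what makes the operator-switching error summable at the rate $(t+b)^{-5/4}$.
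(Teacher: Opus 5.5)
Your proof is correct, and the residual-decay half is the paper's argument. You derive the same recursion $r_{t+1}\le(1-\frac{a}{t+b})r_t+B(\tau_t-\tau_{t+1})$ and close it under the same requirement $a=\eta_0\tau_0/2>\frac14$. The only difference there is that you prove the Chung-type bound by a direct induction with an explicit constant, whereas the paper cites Lemma~\ref{lem:chung_harmonic_general}.

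The boundedness half takes a genuinely different route.

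\textbf{The paper's route.} It applies the three-point inequality of Lemma~\ref{lem:proj_key} against the \emph{fixed} unperturbed solution $\u^\star$. It uses only monotonicity of $V$ and the bound $\langle\bar{\u}_{t+\frac12},\bar{\u}_{t+\frac12}-\u^\star\rangle\ge\frac12\|\bar{\u}_{t+\frac12}-\u^\star\|^2-\frac12\|\u^\star\|^2$. With $a_t=\|\bar{\u}_t-\u^\star\|^2$, this gives $a_{t+1}\le(1-\frac{\eta_t\tau_t}{2})a_t+\eta_t\tau_t\|\u^\star\|^2$, and induction yields $a_t\le\max\{a_0,2\|\u^\star\|^2\}$.

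\textbf{Your route.} You track the \emph{moving} center $\u^\star_{\tau_t}$ using the contraction half of Lemma~\ref{lem:eg_map_contraction}, and you pay for the center's drift via Lemma~\ref{lem:per_sol}(2).

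\textbf{What each buys.} The paper's argument has no drift term, so it does not depend on how fast $\tau_t$ decreases. It uses only $\eta_t\tau_t\le\frac12$ and $\eta_t^2M_t^2\le\eta_t\tau_t/16$. Yours needs the relative decrement $(\tau_t-\tau_{t+1})/\tau_t$ to be $\mO(\eta_t\tau_t)$. That holds here because both quantities are $\Theta(1/(t+b))$, and your radius $\|\u^\star\|/(2\eta_0\tau_0)$ reflects exactly that balance. In exchange, your version runs both halves of the proof off a single lemma, and it mirrors the moving-center structure of the proof of Theorem~\ref{thm:pert_any}.
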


\begin{proof}
We first prove boundedness. Since $\U=\mathbb R^{n+d}$, Assumption~\ref{ass:exist} implies that there exists $\u^\star$ such that $V(\u^\star)=0$. Applying Lemma~\ref{lem:proj_key} with $\U=\mathbb R^{n+d}$ to the deterministic EG update gives
\begin{align*}
\|\bar{\u}_{t+1}-\u^\star\|^2
\leq\ &
\|\bar{\u}_{t}-\u^\star\|^2
-
2\eta_t
\langle
W(\bar{\u}_{t+\frac12}),
\bar{\u}_{t+\frac12}-\u^\star
\rangle  +
\eta_t^2
\|W(\bar{\u}_{t})-W(\bar{\u}_{t+\frac12})\|^2
-
\|\bar{\u}_{t+\frac12}-\bar{\u}_{t}\|^2  \\
\leq\ &
\|\bar{\u}_{t}-\u^\star\|^2
-
2\eta_t
\langle
W(\bar{\u}_{t+\frac12}),
\bar{\u}_{t+\frac12}-\u^\star
\rangle -
\left(1-\eta_t^2M_t^2\right)
\|\bar{\u}_{t+\frac12}-\bar{\u}_{t}\|^2.
\end{align*}
By monotonicity of $V$ and $V(\u^\star)=0$, we have
\[
\langle
V(\bar{\u}_{t+\frac12}),
\bar{\u}_{t+\frac12}-\u^\star
\rangle
\ge0.
\]
Moreover,
\[
\langle
\bar{\u}_{t+\frac12},
\bar{\u}_{t+\frac12}-\u^\star
\rangle
=
\frac12\|\bar{\u}_{t+\frac12}-\u^\star\|^2
+
\frac12\|\bar{\u}_{t+\frac12}\|^2
-
\frac12\|\u^\star\|^2
\ge
\frac12\|\bar{\u}_{t+\frac12}-\u^\star\|^2
-
\frac12\|\u^\star\|^2.
\]
Therefore,
\[
\langle
W(\bar{\u}_{t+\frac12}),
\bar{\u}_{t+\frac12}-\u^\star
\rangle
\ge
\frac{\tau_t}{2}
\|\bar{\u}_{t+\frac12}-\u^\star\|^2
-
\frac{\tau_t}{2}\|\u^\star\|^2.
\]
Using
\[
\|\bar{\u}_{t+\frac12}-\u^\star\|^2
\ge
\frac12\|\bar{\u}_{t}-\u^\star\|^2
-
\|\bar{\u}_{t+\frac12}-\bar{\u}_{t}\|^2,
\]
we obtain
\begin{align*}
\|\bar{\u}_{t+1}-\u^\star\|^2
\leq\ &
\left(1-\frac{\eta_t\tau_t}{2}\right)
\|\bar{\u}_{t}-\u^\star\|^2
+
\eta_t\tau_t\|\u^\star\|^2  -
\left(1-\eta_t\tau_t-\eta_t^2M_t^2\right)
\|\bar{\u}_{t+\frac12}-\bar{\u}_{t}\|^2.
\end{align*}
By the stepsize condition in Theorem~\ref{thm:rate_any_uncon_eg}, $\eta_t^2M_t^2
\le
\frac{\eta_t\tau_t}{16}, 
\eta_t\tau_t\le \frac12,$ we have $1-\eta_t\tau_t-\eta_t^2M_t^2\ge0.$ 
Thus, with
\[
a_t\coloneqq \|\bar{\u}_t-\u^\star\|^2,
\]
we have
\[
a_{t+1}
\le
\left(1-\frac{\eta_t\tau_t}{2}\right)a_t
+
\eta_t\tau_t\|\u^\star\|^2.
\]
It follows by mathematical induction that
\[
a_t
\le
A
\coloneqq
\max\{a_0,2\|\u^\star\|^2\},
\qquad \forall t\ge0.
\]
Therefore,
\[
\|\bar{\u}_t\|
\le
\|\u^\star\|+\sqrt{A}
\eqqcolon B,
\qquad \forall t\ge0.
\]
 
For fixed $t$, the operator $W$ is $\tau_t$-strongly monotone and $M_t\left(M_t\coloneqq 2L+\tau_t\right)$-Lipschitz continuous. Applying Lemma~\ref{lem:eg_map_contraction} with $W(\cdot)= V(\cdot)+\tau_t \cdot $ and using the stepsize condition and $\tau_{t+1}\le\tau_t$, we obtain
\begin{align*}
\|W(\bar{\u}_{t+1})\| \leq\ & \left(1-\frac{\eta_t\tau_t}{2}\right)\|W(\bar{\u}_{t })\|+ 
\left(\tau_t-\tau_{t+1}\right)\|\bar{\u}_{t+1}\|\\
\le\ &
\left(1-\frac{\eta_t\tau_t}{2}\right)\|W(\bar{\u}_{t })\|   
+
B(\tau_t-\tau_{t+1}).
\end{align*}
By the schedules, $\eta_t\tau_t
=
\eta_0\tau_0(t+b)^{-1},$
and
\[
\tau_t-\tau_{t+1}
=
\tau_0\left((t+b)^{-1/4}-(t+b+1)^{-1/4}\right)
\le
\frac{\tau_0}{4}(t+b)^{-5/4}.
\] 
Then
\[
\|W(\bar{\u}_{t+1})\|  
\le
\left(1-\frac{ \eta_0\tau_0}{2(t+b)}\right)\|W_{t}(\bar{\u}_{t})\|  
+
\frac{\tau_0 B}{4}(t+b)^{-5/4}.
\]
Since $\eta_0\tau_0>1$, we can apply Lemma~\ref{lem:chung_harmonic_general} with $r=\tfrac14$ yields
\[
\|W_{t}(\bar{\u}_{t})\| \leq\mO(t^{-1/4}).
\]
Finally, by the definition of $V$, we have 
\[
\|V(\bar{\u}_t)\|
\le
\|W(\bar{\u}_t)\|+\tau_t\|\bar{\u}_t\|
\le 
\mO(t^{-1/4}),
\]
which  completes the proof.
\end{proof}

We now show that the stochastic trajectory tracks the deterministic reference trajectory.

\begin{lemma}[Stochastic tracking]
\label{lem:stoch_track_uncon}
Under the assumptions of Theorem~\ref{thm:rate_any_uncon_eg}, we have
\[
\E[\|\u_t-\bar{\u}_t\|^2]
=
\mO(t^{-1/2}).
\]
\end{lemma}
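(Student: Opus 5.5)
We compare the stochastic PS-EG iterates with the reference trajectory through the one-step EG map. Write $\mathcal T_t\coloneqq\mathcal T_{\eta_t}$ for the unconstrained EG map of Lemma~\ref{lem:eg_map_contraction}, built from $W=V+\tau_t(\cdot)$. This operator is $\tau_t$-strongly monotone and $M_t$-Lipschitz with $M_t=2L+\tau_t$. Then $\bar{\u}_{t+1}=\mathcal T_t(\bar{\u}_t)$. The stochastic step can be written as
\[
\u_{t+1}=\mathcal T_t(\u_t)+\bm e_t,\qquad \bm e_t\coloneqq \eta_t\bigl(W(\tilde{\u}_t)-W(\u_{t+\frac12})\bigr)-\eta_t\bm\zeta_{t+\frac12},\qquad \tilde{\u}_t\coloneqq \u_t-\eta_t W(\u_t).
\]
Since $\u_{t+\frac12}=\tilde{\u}_t-\eta_t\bm\zeta_t$, Lipschitz continuity gives $\|W(\tilde{\u}_t)-W(\u_{t+\frac12})\|\le \eta_t M_t\|\bm\zeta_t\|$. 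Hence $\bm e_t=-\eta_t\bm\zeta_{t+\frac12}+\bm r_t$, where $\|\bm r_t\|\le \eta_t^2M_t\|\bm\zeta_t\|$.

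Set $d_t=\|\u_t-\bar{\u}_t\|^2$ and expand
\[
d_{t+1}=\|\mathcal T_t(\u_t)-\mathcal T_t(\bar{\u}_t)\|^2+2\langle \mathcal T_t(\u_t)-\mathcal T_t(\bar{\u}_t),\bm e_t\rangle+\|\bm e_t\|^2 .
\]
\begin{itemize}
\item The stepsize condition $\eta_t\le \tau_t/(16M_t^2)$ in Theorem~\ref{thm:rate_any_uncon_eg} lets us apply Lemma~\ref{lem:eg_map_contraction}, so the first term is at most $(1-\eta_t\tau_t)d_t$.
\item In the cross term, $\mathcal T_t(\u_t)-\mathcal T_t(\bar{\u}_t)$ is $\mathcal F_{t+\frac12}$-measurable, because $\u_t$ and $\bar{\u}_t$ are $\mathcal F_t$-measurable and $\mathcal T_t$ is deterministic. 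By Assumption~\ref{ass:stoc}, the part involving $\bm\zeta_{t+\frac12}$ therefore vanishes in expectation.
\item The remaining cross term is handled by Young's inequality with weight $\eta_t\tau_t/2$: it is at most $\frac{\eta_t\tau_t}{2}(1-\eta_t\tau_t)\E d_t+\frac{2}{\eta_t\tau_t}\E\|\bm r_t\|^2$.
\item Since $\E\|\bm r_t\|^2\le \eta_t^4M_t^2\sigma^2$, that second piece is $\mathcal O(\eta_t^3M_t^2\sigma^2/\tau_t)\le \mathcal O(\eta_t^2\sigma^2)$, again by the stepsize condition.
\item Finally, $\E\|\bm e_t\|^2\le 2\eta_t^2\sigma^2+2\eta_t^4M_t^2\sigma^2=\mathcal O(\eta_t^2\sigma^2)$.
\end{itemize}
Collecting terms gives the scalar recursion
\[
\E d_{t+1}\le \Bigl(1-\tfrac{\eta_t\tau_t}{2}\Bigr)\E d_t + c\,\sigma^2\eta_t^2,\qquad d_0=0 .
\]
This is the crucial advantage over the general framework. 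Because both trajectories use the same map, no drift term from the moving saddle point $\u^\star_{\tau_t}$ appears, and no mean-square boundedness of $\u_t$ is needed.

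Now plug in the schedules. We have $\eta_t\tau_t=\eta_0\tau_0/(t+b)$ and $\eta_t^2=\eta_0^2(t+b)^{-3/2}$. The recursion then has the form $a_{t+1}\le(1-\frac{\kappa}{t+b})a_t+C(t+b)^{-1-r}$ with $\kappa=\eta_0\tau_0/2$ and $r=1/2$. I would invoke Lemma~\ref{lem:chung_harmonic_general} with $r=\tfrac12$ to get $\E d_t=\mathcal O(t^{-1/2})$.

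The main obstacle is that this lemma presumably needs $\kappa>r$, i.e.\ $\eta_0\tau_0/2>1/2$, which is exactly the hypothesis $\eta_0\tau_0>1$. If the lemma demands a strictly larger margin, I would first try to sharpen the contraction factor to $1-\frac{3}{4}\eta_t\tau_t$ by using a smaller Young weight. Failing that, I would unroll the product $\prod(1-\kappa/(s+b))\le((s+b)/(t+b))^{\kappa}$ directly and sum $(s+b)^{\kappa-3/2}$. This yields $(t+b)^{-1/2}$ whenever $\kappa>1/2$.

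A secondary check is that the residual $\bm r_t$ has the claimed bound even though $\bm\zeta_t$ need not be independent of later noise. Only the conditional second moment is used, so this is fine.
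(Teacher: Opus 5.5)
Your proposal is correct and follows essentially the same route as the paper. You write $\u_{t+1}=\mathcal T_t(\u_t)+\bm r_t-\eta_t\bm\zeta_{t+\frac12}$ with $\|\bm r_t\|\le\eta_t^2M_t\|\bm\zeta_t\|$, drop the $\bm\zeta_{t+\frac12}$ cross term by $\mathcal F_{t+\frac12}$-measurability, use Lemma~\ref{lem:eg_map_contraction} plus a Young split with weight $\Theta(\eta_t\tau_t)$ to reach $r_{t+1}\le(1-\frac{\eta_t\tau_t}{2})r_t+\mathcal O(\eta_t^2\sigma^2)$, and close with Lemma~\ref{lem:chung_harmonic_general} at $r=\frac12$, where $\eta_0\tau_0>1$ gives exactly $\frac{\eta_0\tau_0}{2}>\frac12$. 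The differences are cosmetic: the paper applies Young to $\|\mathcal T_t(\u_t)-\mathcal T_t(\bar{\u}_t)+\bm r_t\|^2$ rather than to the inner product, and it absorbs the $\eta_t^3M_t^2/\tau_t$ term via $(t+b)^{-2}\le(t+b)^{-3/2}$ rather than via the stepsize condition.
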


\begin{proof}
For fixed $t$, define the deterministic EG map associated with $W$:
\[
\mathcal T_t(\u)
\coloneqq
\u-\eta_t W(\u-\eta_t W(\u)).
\]
Let
\[
\e_t\coloneqq \u_t-\bar{\u}_t,
\qquad
\p_t\coloneqq \u_t-\eta_t W(\u_t).
\]
The stochastic intermediate point satisfies
\[
\u_{t+\frac12}
=
\p_t-\eta_t\bm{\zeta}_t.
\]
Thus,
\begin{align*}
\u_{t+1}
&=
\u_t-\eta_t\left(W(\u_{t+\frac12})+\bm{\zeta}_{t+\frac12}\right)  =
\mathcal T_t(\u_t)
+
\Delta_t
-
\eta_t\bm{\zeta}_{t+\frac12},
\end{align*}
where
\[
\Delta_t
\coloneqq
-\eta_t\left(W(\u_{t+\frac12})-W(\p_t)\right).
\]
By Lipschitz continuity of $W$,
\[
\|\Delta_t\|
\le
\eta_tM_t\|\u_{t+\frac12}-\p_t\|
=
\eta_t^2M_t\|\bm{\zeta}_t\|.
\]
Since $\bar{\u}_{t+1}=\mathcal T_t(\bar{\u}_t)$, we have
\[
\e_{t+1}
=
\mathcal T_t(\u_t)-\mathcal T_t(\bar{\u}_t)
+
\Delta_t
-
\eta_t\bm{\zeta}_{t+\frac12}.
\]
Conditioning on $\mathcal F_{t+\frac12}$ and using Assumption~\ref{ass:stoc}, we obtain
\[
\E\!\left[\|\e_{t+1}\|^2\mid\mathcal F_{t+\frac12}\right]
\le
\|\mathcal T_t(\u_t)-\mathcal T_t(\bar{\u}_t)+\Delta_t\|^2
+
\eta_t^2\sigma^2.
\] 
By Young's inequality, we get
\begin{align*}
\|\mathcal T_t(\u_t)-\mathcal T_t(\bar{\u}_t)+\Delta_t\|^2
\leq\ &
(1+ \frac{\eta_t\tau_t}{4})
\|\mathcal T_t(\u_t)-\mathcal T_t(\bar{\u}_t)\|^2  +
(1+ \frac{4}{\eta_t\tau_t} )
\|\Delta_t\|^2,
\end{align*}
By Lemma~\ref{lem:eg_map_contraction}, we have
\[
\|\mathcal T_t(\u_t)-\mathcal T_t(\bar{\u}_t)\|^2
\le
(1-\eta_t\tau_t)\|\e_t\|^2.
\]
Since $\eta_t\tau_t\le\frac12$, we have
\[
(1+\rho_t)(1-\eta_t\tau_t)
\le
1-\frac{\eta_t\tau_t}{2},
\qquad
1+\rho_t^{-1}
\le
\frac{5}{\eta_t\tau_t}.
\]
Moreover, $\E[\|\Delta_t\|^2]
\le
\eta_t^4M_t^2\sigma^2.$ 
Taking total expectation, with $r_t\coloneqq \E[\|\e_t\|^2],$ 
we get
\[
r_{t+1}
\le
\left(1-\frac{\eta_t\tau_t}{2}\right)r_t
+
\eta_t^2\sigma^2
+
\frac{5\eta_t^3M_t^2}{\tau_t}\sigma^2.
\]
Let $\bar M\coloneqq L+\tau_0b^{-1/4}.$ 
Then $M_t\le\bar M$ for all $t\ge0$. Using
\[
\eta_t^2=\eta_0^2(t+b)^{-3/2},
\qquad
\frac{\eta_t^3}{\tau_t}
=
\frac{\eta_0^3}{\tau_0}(t+b)^{-2},
\]
and $\eta_t\tau_t=\eta_0\tau_0(t+b)^{-1}$, we obtain
\[
r_{t+1}
\le
\left(1-\frac{\theta}{t+b}\right)r_t
+
C_1(t+b)^{-3/2}
+
C_2(t+b)^{-2},
\qquad
\theta\coloneqq \frac{\eta_0\tau_0}{2}.
\]
Since $(t+b)^{-2}\le (t+b)^{-3/2}$ for $b\ge1$, this implies
\[
r_{t+1}
\le
\left(1-\frac{\theta}{t+b}\right)r_t
+
C(t+b)^{-3/2}.
\]
Because $\eta_0\tau_0>1$, we have $\theta>\frac12$. Applying Lemma~\ref{lem:chung_harmonic_general} with $r=\tfrac12$ gives
\[
r_t=\mO(t^{-1/2}).
\]
This completes the proof.
\end{proof}

\begin{proof}[Proof of Theorem~\ref{thm:rate_any_uncon_eg}]
By Lipschitz continuity of $V$, we have
\begin{align*}
\E[\|V(\u_t)\|^2]
&\le
2\E[\|V(\u_t)-V(\bar{\u}_t)\|^2]
+
2\|V(\bar{\u}_t)\|^2  \\
&\le
2L^2\E[\|\u_t-\bar{\u}_t\|^2]
+
2\|V(\bar{\u}_t)\|^2.
\end{align*}
By Lemma~\ref{lem:stoch_track_uncon},
\[
\E[\|\u_t-\bar{\u}_t\|^2]=\mO(t^{-1/2}),
\]
and by Lemma~\ref{lem:det_ref_uncon},
\[
\|V(\bar{\u}_t)\|^2=\mO(t^{-1/2}).
\]
Therefore,
\[
\E[\|V(\u_t)\|^2]
=
\mO(t^{-1/2}).
\]
Finally, Jensen's inequality gives
\[
\E[\|V(\u_t)\|]
\le
\sqrt{\E[\|V(\u_t)\|^2]}
=
\mO(t^{-1/4}).
\]
This completes the proof.
\end{proof}
\section{Conclusion}\label{sec:6}

In this paper, we studied last-iterate convergence of stochastic first-order methods for smooth convex--concave minimax optimization under the standard bounded-variance stochastic oracle. We proposed a perturbation framework for stochastic EG and stochastic OGDA and established non-asymptotic last-iterate convergence guarantees under standard smoothness assumptions.

For the horizon-dependent setting, we showed that both PS-EG and PS-OGDA achieve an $\mathcal{O}(T^{-1/4})$ last-iterate convergence rate for the restricted primal--dual gap. We further developed an anytime variant based on diminishing perturbations and diminishing stepsizes. For general closed convex feasible sets, both methods achieve an $\mathcal{O}(T^{-1/5})$ last-iterate convergence rate for the restricted primal--dual gap. In the unconstrained setting, we further established a sharper $\mathcal{O}(T^{-1/4})$ anytime convergence rate in terms of the gradient norm for PS-EG.

Several interesting questions remain open.

First, although PS-EG achieves an $\mathcal{O}(T^{-1/4})$ anytime gradient-norm convergence rate in the unconstrained setting, PS-OGDA currently admits only an $\mathcal{O}(T^{-1/5})$ guarantee. Whether the sharper $\mathcal{O}(T^{-1/4})$ rate can also be established for PS-OGDA remains an interesting open question.

Second, in the unconstrained setting, faster rates of $\mathcal{O}(T^{-1/2})$ (in terms of the gradient norm) are known for multi-loop algorithms~\cite{chen2024near}, where convergence is established for the final outer iterate. Extending such ideas to constrained minimax optimization and obtaining faster last-iterate convergence guarantees is another promising direction.

Finally, our horizon-dependent rate remains slower than the best known $\mathcal{O}(T^{-1/3})$ last-iterate guarantee in the unconstrained setting obtained under stronger stochastic oracle models~\cite{cai2022stochastic}. It remains an interesting question whether such a rate can be achieved under the standard bounded-variance stochastic oracle.
\bibliographystyle{abbrvnat}
\bibliography{references}
\section*{Appendix} 
\appendix

\section{Related Work}
In this appendix, we first review related work according to two aspects: local convergence and last-iterate convergence rates.

\paragraph{Local convergence.}
Although stochastic extragradient (S-EG) and stochastic optimistic gradient descent-ascent (S-OGDA) may diverge globally even for simple bilinear problems~\citep{ryu2019ode,chavdarova2019reducing,hsieh2020explore}, local convergence guarantees have been established under suitable conditions~\citep{hsieh2019convergence,azizian2021last}. These results typically rely on stability properties in a neighborhood of a solution rather than global monotonicity.

\paragraph{Last-iterate convergence rates.}
A large body of work studies last-iterate convergence rates in the stochastic setting. In the unconstrained case, Lee et al.~\citep{lee2021fast} analyzed the stochastic fast extragradient (S-FEG) method and showed that error accumulation may occur unless the noise variance decays at a rate of $\mO(1/T)$. Under this assumption, they obtained an $\mO(1/T)$ rate in terms of the gradient norm; however, no explicit mechanism was provided to enforce such variance decay. Subsequent work~\citep{cai2022stochastic} demonstrated that, using mini-batching, a rate of $\mO(T^{-1/4})$ can be achieved in terms of stochastic oracle complexity. Moreover, by combining variance reduction techniques (e.g., PAGE) with extrapolation schemes, the Extrapolated Stochastic Halpern-Monotone (E-Halpern) method further improves the stochastic oracle complexity to $\mO(T^{-1/3})$. However, these results rely on stronger stochastic oracle assumptions, including pointwise unbiasedness and uniformly bounded variance of the noise, as well as more restrictive sampling requirements. In particular, they assume access to multi-point oracles that allow querying multiple points using the same random sample, which is essential for constructing variance-reduced estimators. Another important limitation is that these methods are restricted to unconstrained problems and do not extend to constrained settings.

In the constrained setting, strong monotonicity and related growth conditions play a central role in obtaining fast rates. Under such assumptions, S-EG achieves last-iterate convergence, and similar results extend to S-OGDA~\citep{hsieh2019convergence,mishchenko2020revisiting,kannan2019optimal,gorbunov2022stochastic,beznosikov2025distributed}. More generally, linear growth or quasi-strong monotonicity conditions yield $\mO(1/T)$ rates~\citep{vankov2023last,choudhury2023single}.
 
\paragraph{Concurrent work.}
Several recent concurrent works are closely related to ours, including
Sohrabi et al.~\cite{sohrabi2026accelerated} and
Ito et al.~\cite{itolast}.

The generalized optimistic methods with anchoring (GOMA) proposed in~\cite{sohrabi2026accelerated} establish an $\mathcal{O}(T^{-1/4})$ anytime last-iterate convergence rate in terms of the gradient norm for unconstrained convex--concave minimax problems. By contrast, our perturbation framework naturally extends to constrained convex--concave problems, where convergence is established in terms of the restricted primal--dual gap.

Another independent concurrent work~\cite{itolast} considers the same stochastic constrained convex--concave setting as ours and establishes last-iterate convergence guarantees under the assumption that the feasible domain is compact. In contrast, our analysis applies to arbitrary closed convex feasible sets, including both compact and noncompact domains, and therefore also covers the unconstrained setting as a special case. Furthermore, even on compact feasible domains, our analysis removes the logarithmic factor appearing in~\cite{itolast} while attaining the same polynomial convergence rate.



\section{Useful Lemmas}
We first collect several auxiliary results that will be used throughout the analysis. In particular, the projection inequality forms the basis for deriving the key recursive inequalities. We then present several variants of Chung's lemma that allow us to analyze different parameter schedules and derive the corresponding convergence rates in expectation.

\begin{lemma}[{\cite[Lemma A.2]{hsieh2019convergence}}]
\label{lem:proj_key}
Let $\x,\y_1,\y_2\in \R^{n+d}$ and $\U\subseteq \R^{n+d}$ be a closed convex set. 
Define $\x_1^+\coloneqq \proj_{\U} (\x-\y_1)$ and $\x_2^+\coloneqq \proj_{\U} (\x-\y_2)$. 
Then for all $\p\in \U$, we have
\begin{align*}
\|\x_2^+-\p\|^2
\leq \|\x-\p\|^2
-2\langle \y_2, \x_1^+-\p\rangle
+\|\y_2-\y_1\|^2
-\|\x_1^+-\x\|^2.
\end{align*}
\end{lemma}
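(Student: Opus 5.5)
This is the projection inequality of Lemma~\ref{lem:proj_key}. I would prove it using only the variational characterization of the Euclidean projection. For a closed convex $\U$ and any $\z$, the point $\proj_{\U}(\z)$ satisfies $\langle \z-\proj_{\U}(\z), \p'-\proj_{\U}(\z)\rangle\le 0$ for all $\p'\in\U$. I will use this twice: once for $\x_2^+$ tested at $\p$, and once for $\x_1^+$ tested at $\x_2^+$.

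\textbf{Step 1: the three-point identity for $\x_2^+$.} Since $\x_2^+=\proj_{\U}(\x-\y_2)$ and $\p\in\U$, the characterization gives $\langle \x-\y_2-\x_2^+,\p-\x_2^+\rangle\le0$. Expanding $2\langle \x-\x_2^+,\p-\x_2^+\rangle=\|\x-\x_2^+\|^2+\|\p-\x_2^+\|^2-\|\x-\p\|^2$ then yields
\[
\|\x_2^+-\p\|^2\le \|\x-\p\|^2-\|\x_2^+-\x\|^2-2\langle \y_2,\x_2^+-\p\rangle .
\]
I split the inner product as $\langle \y_2,\x_2^+-\p\rangle=\langle \y_2,\x_1^+-\p\rangle+\langle \y_2,\x_2^+-\x_1^+\rangle$, which produces the target term $-2\langle\y_2,\x_1^+-\p\rangle$. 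I also expand $\|\x_2^+-\x\|^2=\|\x_2^+-\x_1^+\|^2+\|\x_1^+-\x\|^2+2\langle \x_2^+-\x_1^+,\x_1^+-\x\rangle$, which produces the target term $-\|\x_1^+-\x\|^2$.

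\textbf{Step 2: the remainder, which is the main point.} After Step 1, what remains to show is
\[
-\|\x_2^+-\x_1^+\|^2-2\langle \x_2^+-\x_1^+,\x_1^+-\x+\y_2\rangle\le \|\y_2-\y_1\|^2 .
\]
Here I apply the characterization for $\x_1^+=\proj_{\U}(\x-\y_1)$ tested at $\x_2^+\in\U$. This gives $\langle \x_1^+-\x+\y_1,\x_2^+-\x_1^+\rangle\ge0$. Hence the left-hand side is at most $-\|\x_2^+-\x_1^+\|^2-2\langle \x_2^+-\x_1^+,\y_2-\y_1\rangle$. By Young's inequality, $-2\langle a,b\rangle\le\|a\|^2+\|b\|^2$, so this is at most $\|\y_2-\y_1\|^2$.

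Combining the two steps gives exactly the claimed bound. The only delicate point is the bookkeeping of the cross terms, that is, ensuring that the $\x_1^+$ projection inequality absorbs $\langle\x_2^+-\x_1^+,\x_1^+-\x\rangle$ with the correct sign. No smoothness or monotonicity is needed; the argument uses only convexity and closedness of $\U$.
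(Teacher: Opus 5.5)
Your proof is correct. You use the projection's variational inequality twice, once for $\x_2^+$ tested at $\p$ and once for $\x_1^+$ tested at $\x_2^+$, then apply Young's inequality to $\langle \x_2^+-\x_1^+,\y_2-\y_1\rangle$; this gives exactly the claimed bound and does not need $\x\in\U$. The paper gives no proof of its own and simply cites Lemma A.2 of Hsieh et al.\ (2019), so there is no in-paper argument to compare against, but these are the standard ingredients for this inequality.
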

This inequality will be crucial for deriving the key recursion, enabling a unified treatment of both constrained and unconstrained settings. We next introduce several variants of Chung's lemma, which provide a convenient tool for translating recursive inequalities into explicit convergence rates.
 
\begin{lemma}
	\label{lem:chung_mod}
	Let $\{a_t\}_{t\ge 0}$ be a sequence of nonnegative real numbers, and let
	$b\in\mathbb{N}$ with $b\ge 1$ be such that, for all $T\in\mathbb{N}$ and
	$t=0,\ldots,T-1$,
	\begin{equation}
		\label{eq:chung-recursion}
		a_{t+1}\leq 
		\left(1 - \frac{T^{-\delta}}{(t+b)^{\nu}}\right) a_t +
		\frac{q}{(t+b)^{2\nu}},
	\end{equation}
	where $0\leq \delta<\frac{1}{2}$, $\frac12<\nu<1-\delta$, and $q>0$. Then there exist
	constants $c,C>0$, independent of $T$, such that for all $T\ge 1+b$,
	\[
	a_T
	\le
	a_0\exp \left(-cT^{1-\nu-\delta}\right)
	+
	C qT^{\delta-\nu}.
	\] 
\end{lemma}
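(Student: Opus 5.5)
Write $\mu_t \coloneqq T^{-\delta}(t+b)^{-\nu}$ and $\beta_t \coloneqq q(t+b)^{-2\nu}$. The plan is to unroll the recursion~\eqref{eq:chung-recursion} and bound the homogeneous part and the forcing part separately. Since $T^{-\delta}\le 1$ and $b\ge1$, every factor satisfies $0\le 1-\mu_t<1$. Unrolling from $t=0$ to $T-1$ gives
\[
a_T \le a_0\prod_{t=0}^{T-1}(1-\mu_t) + \sum_{s=0}^{T-1}\beta_s\prod_{t=s+1}^{T-1}(1-\mu_t).
\]
Using $1-x\le e^{-x}$, each product is bounded by $\exp(-\sum\mu_t)$.

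\textbf{Homogeneous term.} First estimate $\sum_{t=0}^{T-1}(t+b)^{-\nu}\ge \int_b^{T+b}x^{-\nu}dx=\frac{(T+b)^{1-\nu}-b^{1-\nu}}{1-\nu}$. For $T\ge 1+b$ we have $T+b\ge 2b$, hence $b\le (T+b)/2$, and the right-hand side is at least $\frac{1-2^{\nu-1}}{1-\nu}(T+b)^{1-\nu}\ge c\,T^{1-\nu}$. Multiplying by $T^{-\delta}$ yields the factor $\exp(-cT^{1-\nu-\delta})$.

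\textbf{Forcing term.} This is the main work. Split the sum at $s_0=\lfloor T/2\rfloor$.

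For $s< s_0$, the tail product runs over at least $T/2-1$ indices $t$, each with $(t+b)^{-\nu}\ge (T+b)^{-\nu}\gtrsim T^{-\nu}$ (using $b\le T$). So the product is at most $\exp(-c'T^{1-\nu-\delta})$, up to handling the edge case of small $T$ through constants. Meanwhile $\sum_s\beta_s\le q\sum_{s\ge0}(s+b)^{-2\nu}\le q\,C_\nu$, which is finite because $2\nu>1$. Since $1-\nu-\delta>0$, the quantity $\exp(-c'T^{1-\nu-\delta})$ is dominated by $C\,T^{\delta-\nu}$ with $C$ independent of $T$, because a stretched exponential beats any polynomial.

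For $s\ge s_0$, we have $(s+b)\asymp T$. I would argue through the comparison identity
\[
\sum_{s=s_0}^{T-1}\mu_s\prod_{t=s+1}^{T-1}(1-\mu_t)\le 1,
\]
which is the telescoping sum of $\prod_{t>s}(1-\mu_t)-\prod_{t\ge s}(1-\mu_t)$. Writing $\beta_s=\mu_s\cdot\beta_s/\mu_s$ with $\beta_s/\mu_s=qT^{\delta}(s+b)^{-\nu}\le q\,2^{\nu}T^{\delta-\nu}$ for $s\ge T/2$, this part is at most $2^\nu qT^{\delta-\nu}$.

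Adding the three pieces gives the claim. The step needing most care is making all constants uniform in $T$, especially the lower bound on $\sum\mu_t$ for $T$ near $1+b$, and absorbing $q\,C_\nu e^{-c'T^{1-\nu-\delta}}$ into $C\,qT^{\delta-\nu}$. For the latter I would use $\sup_{x\ge1}x^{\nu-\delta}e^{-c'x^{1-\nu-\delta}}<\infty$.
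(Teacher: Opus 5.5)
Your proof is correct, and its skeleton matches the paper's. Both unroll the recursion, bound the products by $\exp(-\sum\mu_t)$, and split the forcing sum at $T/2$. In both, the early part is controlled by $\exp(-c'T^{1-\nu-\delta})$ times the convergent series $\sum_s (s+b)^{-2\nu}$.

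The genuine difference is in the late part $s\ge T/2$:
\begin{itemize}
\item The paper bounds $(k+b)^{-2\nu}\le C_3T^{-2\nu}$ and lower-bounds the exponent by $c_3(T-1-k)T^{-\nu}$. It then sums a geometric series, using $1-e^{-x}\ge x/(1+x)$ to show this sum is $\mathcal{O}(T^{\nu+\delta})$.
\item You instead factor $\beta_s=\mu_s\cdot qT^{\delta}(s+b)^{-\nu}$ and apply the exact telescoping bound $\sum_{s=s_0}^{T-1}\mu_s\prod_{t=s+1}^{T-1}(1-\mu_t)\le 1$, which is valid because $0<\mu_t\le 1$.
\end{itemize}
Your route is shorter, needs no exponential or geometric-series estimates for this piece, and gives the explicit constant $2^{\nu}$.

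You also make explicit the absorption of $qC_\nu e^{-c'T^{1-\nu-\delta}}$ into $CqT^{\delta-\nu}$, via boundedness of $x^{\nu-\delta}e^{-c'x^{1-\nu-\delta}}$. The paper performs this step only implicitly when stating its final bound. Your integral comparison for the homogeneous term is likewise a harmless variant of the paper's half-sum estimate.

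One small remark: for $s<\lfloor T/2\rfloor$ the tail product has $T-1-s\ge T-\lfloor T/2\rfloor\ge T/2$ factors, not just $T/2-1$. So the small-$T$ edge case you flag does not actually arise.
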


\begin{proof}
	Let $\lambda_t\coloneqq\frac{T^{-\delta}}{(t+b)^{\nu}}$. We have
	$0<\lambda_t\leq 1$. Iterating the recursion \eqref{eq:chung-recursion}
	from $t=0$ to $t=T-1$ yields
	\[
	a_T
	\le
	a_0\prod_{s=0}^{T-1}(1-\lambda_s)
	+
	\sum_{k=0}^{T-1}
	\frac{q}{(k+b)^{2\nu}}
	\prod_{s=k+1}^{T-1}(1-\lambda_s),
	\]
	where an empty product is understood as $1$.
	
	Using $1-x\le e^{-x}$ for all $x\ge 0$, we obtain
	\[
	\prod_{s=r}^{T-1}(1-\lambda_s)
	\le
	\exp\left\{-\sum_{s=r}^{T-1}\lambda_s\right\},
	\qquad 0\le r\le T-1.
	\]
	In particular,
	\[
	\prod_{s=0}^{T-1}(1-\lambda_s)
	\le
	\exp\left\{
	-T^{-\delta}\sum_{s=0}^{T-1}\frac{1}{(s+b)^\nu}
	\right\}.
	\]
	Since $0<\nu<1$, when $T\ge 1+b$, we have
	\[
	\sum_{s=0}^{T-1}\frac{1}{(s+b)^\nu}
	\ge
	\sum_{\lceil T/2\rceil\le s\le T-1}\frac{1}{(s+b)^\nu}
	\ge
	c_1T^{1-\nu}
	\]
	for some constant $c_1>0$ independent of $T$. Therefore, we bound the first
	term as follows:
	\[
	a_0\prod_{s=0}^{T-1}(1-\lambda_s)
	\le
	a_0\exp\left(-c_1T^{1-\nu-\delta}\right).
	\]
	
	It remains to bound the accumulated error term
	\[
	R_T
	\coloneqq
	\sum_{k=0}^{T-1}
	\frac{q}{(k+b)^{2\nu}}
	\prod_{s=k+1}^{T-1}(1-\lambda_s).
	\]
	Again using $1-x\le e^{-x}$, we have
	\[
	R_T
	\le
	 \sum_{k=0}^{T-1}
	\frac{q}{(k+b)^{2\nu}}
	\exp\left\{
	-T^{-\delta}
	\sum_{s=k+1}^{T-1}\frac{1}{(s+b)^\nu}
	\right\}.
	\]
	We split the sum into two parts:
	\[
	R_T= R_{T,1}+R_{T,2},
	\]
	where
	\[
	R_{T,1}
	\coloneqq
	 \sum_{0\le k< T/2}
	\frac{q}{(k+b)^{2\nu}}
	\exp\left\{
	-T^{-\delta}
	\sum_{s=k+1}^{T-1}\frac{1}{(s+b)^\nu}
	\right\},
	\]
	and
	\[
	R_{T,2}
	\coloneqq
	 \sum_{T/2\le k\le T-1}
	\frac{q}{(k+b)^{2\nu}}
	\exp\left\{
	-T^{-\delta}
	\sum_{s=k+1}^{T-1}\frac{1}{(s+b)^\nu}
	\right\}.
	\]
	
	First consider $R_{T,1}$. If $k<T/2$, then we have
	\[
	\sum_{s=k+1}^{T-1} \frac{1}{(s+b)^\nu}
	\ge
	\sum_{\lceil T/2\rceil\le s\le T-1}\frac{1}{(s+b)^\nu}
	\ge
	c_2T^{1-\nu}
	\]
    for some $c_2>0$.
	Hence
	\[
	R_{T,1}
	\le
	q \exp\left(-c_2T^{1-\nu-\delta}\right)
	\sum_{0\le k< T/2}
	\frac{1}{(k+b)^{2\nu}}.
	\]
	Noting that $2\nu>1$, we have
\[
\sum_{0\le k< T/2}\frac{1}{(k+b)^{2\nu}}
\le
\sum_{k=0}^{\infty}\frac{1}{(k+b)^{2\nu}}
=: C_2,
\]
where $C_2<\infty$ is a constant independent of \(T\).
Moreover, since $\frac12<\nu<1-\delta$, we have $\delta<\nu$.
Therefore,
\[
R_{T,1}
\le
C_2q\exp\left(-c_2T^{\,1-\nu-\delta}\right).
\]

	Next consider $R_{T,2}$. If $k\ge T/2$, then there exists a constant $C_3>0$ such that
	\[
	\frac{1}{(k+b)^{2\nu}}
	\le
	C_3 T^{-2\nu}.
	\]
	Moreover, for $k\le T-1$, we have
	\[
	\sum_{s=k+1}^{T-1}\frac{1}{(s+b)^\nu}
	\ge
	(T-1-k)(T+b)^{-\nu}
	\ge
	c_3(T-1-k)T^{-\nu} 
	\]
    for some $c_3>0$. Here we used $T\ge 1+b$. Therefore,
	\begin{align*}
		R_{T,2}
		\le\ &
		C_3 q T^{-2\nu}
		\sum_{T/2\le k\le T-1}
		\exp\left\{
		-c_3(T-1-k)T^{-\nu-\delta}
		\right\} \\
		\le\ &
		C_3 q T^{-2\nu}
		\sum_{j=0}^{\infty}
		\exp\left(-c_3jT^{-\nu-\delta}\right),
	\end{align*}
	where the second inequality is derived by performing a change of variables
	with $j=T-1-k$. The series on the right-hand side is geometric. Hence
	\[
	\sum_{j=0}^{\infty}
	\exp\left(-c_3jT^{-\nu-\delta}\right)
	=
	\frac{1}{1-\exp(-c_3T^{-\nu-\delta})}.
	\]
	For all $T\ge 1$, we find
	\[
	1-\exp(-c_3T^{-\nu-\delta})
	\ge
	\frac{c_3}{1+c_3}T^{-\nu-\delta}.
	\]
	Therefore, we obtain that
	\[
	R_{T,2}
	\le
	\frac{C_3(1+c_3)q}{c_3} T^{-2\nu}T^{\nu+\delta}
	=
	\frac{C_3(1+c_3)q}{c_3}  T^{\delta-\nu}.
	\]
	
	Combining the bounds for $R_{T,1}$ and $R_{T,2}$, we obtain that
	\[
	R_T
	\le
	C_2q\exp \left(-c_2T^{\,1-\nu-\delta}\right)+\frac{C_3(1+c_3)q}{c_3}  T^{\delta-\nu}.
	\]
	Therefore, there exist constants $c,C>0$, independent of $T$, such that the
	following holds for all $T\ge 1+b$:
	\[
	a_T
	\le
	a_0\exp\left(-cT^{1-\nu-\delta}\right)
	+
	Cq T^{\delta-\nu}.
	\]
	This proves the claimed bound.
\end{proof}
 
\begin{lemma}
	\label{lem:harmonic_chung}
	Let $\{a_t\}_{t\ge 0}$ be a sequence of nonnegative real numbers.
	Suppose that, for some $\tau>0$, $q >0$, and an integer $b\ge 3$, it holds for all
	$t=0,\ldots,T-1$ that
	\begin{equation}
		\label{eq:harmonic-recursion}
		a_{t+1}
		\le
		\left(1-\frac{2}{t+b}\right)a_t
		+
		\frac{q}{\tau^2(t+b)^2}.
	\end{equation}
	Then, for all $T\ge 1$,
	\[
	a_T
	\le
	\frac{a_0(b-2)(b-1)}{(T+b-2)(T+b-1)} 
	+
	\frac{qT}{\tau^2(T+b-2)(T+b-1)}.
	\]
	Consequently, there exists a universal constant $C>0$ such that
	\[
	a_T
	\le
	 \frac{ a_0C b^2}{(T+b)^2}
	+
	\frac{Cq T}{\tau^2(T+b)^2}.
	\]
\end{lemma}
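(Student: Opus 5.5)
The plan is to use the standard harmonic-weights telescoping argument. Since $a_t\ge0$ and every coefficient is nonnegative when $b\ge3$, I would multiply the recursion~\eqref{eq:harmonic-recursion} by a weight $w_{t+1}$ chosen so that $w_{t+1}\left(1-\frac{2}{t+b}\right)=w_t$. The natural choice is $w_t\coloneqq (t+b-2)(t+b-1)$. Indeed,
\[
w_{t+1}\left(1-\tfrac{2}{t+b}\right)=(t+b-1)(t+b)\cdot\tfrac{t+b-2}{t+b}=(t+b-2)(t+b-1)=w_t .
\]
With $b\ge3$, every factor $1-\frac{2}{t+b}$ is positive and every $w_t>0$, so multiplying does not flip the inequality.

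Multiplying~\eqref{eq:harmonic-recursion} by $w_{t+1}$ gives
\[
w_{t+1}a_{t+1}\le w_t a_t+\frac{q\,(t+b-1)(t+b)}{\tau^2(t+b)^2}\le w_t a_t+\frac{q}{\tau^2},
\]
where the second step uses $(t+b-1)/(t+b)\le1$. Summing this from $t=0$ to $T-1$ telescopes to $w_Ta_T\le w_0a_0+qT/\tau^2$. Dividing by $w_T=(T+b-2)(T+b-1)$, and recalling $w_0=(b-2)(b-1)$, yields exactly the first claimed bound.

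For the second, cruder bound, I would compare $(T+b-2)(T+b-1)$ with $(T+b)^2$. Since $b\ge3$ and $T\ge1$, we have $T+b-2\ge\frac{T+b}{4}$ (because $T+b\ge4$ and $T+b-2\ge\frac{T+b}{2}$ once $T+b\ge4$). Likewise $T+b-1\ge\frac{T+b}{2}$, so $(T+b-2)(T+b-1)\ge\frac{(T+b)^2}{8}$. Combining this with $(b-2)(b-1)\le b^2$ gives the stated form with, for example, $C=8$.

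I do not expect a genuine obstacle. The only delicate point is choosing the weight correctly, namely the shifted product $(t+b-2)(t+b-1)$ rather than $(t+b)^2$, so that the contraction factor telescopes exactly. Beyond that, I only need to check positivity, which requires $b\ge3$.
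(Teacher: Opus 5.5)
Your proof is correct and is essentially the paper's argument. The paper unrolls the recursion and evaluates the telescoping products $\prod_{s=k+1}^{T-1}\bigl(1-\frac{2}{s+b}\bigr)=\frac{(k+b-1)(k+b)}{(T+b-2)(T+b-1)}$ explicitly, which is exactly what your weights $w_t=(t+b-2)(t+b-1)$ encode, and both arguments then bound each noise contribution by $q/\tau^2$. For the second claim, your comparison actually shows $T+b-2\ge\frac{T+b}{2}$, so it even gives $C=4$.
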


\begin{proof}
	Iterating~\eqref{eq:harmonic-recursion} gives
	\[
	a_T
	\le
	a_0\prod_{s=0}^{T-1}\left(1-\frac{2}{s+b}\right)
	+
	\frac{q}{\tau^2}
	\sum_{k=0}^{T-1}
	\frac{1}{(k+b)^2}
	\prod_{s=k+1}^{T-1}
	\left(1-\frac{2}{s+b}\right),
	\]
	where an empty product is understood as $1$. Since
	\[
	1-\frac{2}{s+b}=\frac{s+b-2}{s+b},
	\]
	we have the telescoping identity
	\[
	\prod_{s=0}^{T-1}
	\left(1-\frac{2}{s+b}\right)
	=
	\frac{(b-2)(b-1)}{(T+b-2)(T+b-1)}.
	\]
	Similarly, for every $k=0,\ldots,T-1$,
	\[
	\prod_{s=k+1}^{T-1}
	\left(1-\frac{2}{s+b}\right)
	=
	\frac{(k+b-1)(k+b)}{(T+b-2)(T+b-1)}.
	\]
	Therefore, the accumulated error term is bounded by
	\begin{align*}
		&
		\frac{q}{\tau^2}
		\sum_{k=0}^{T-1}
		\frac{1}{(k+b)^2}
		\frac{(k+b-1)(k+b)}{(T+b-2)(T+b-1)}
		\\
		\leq\ &
		\frac{q}{\tau^2}
		\frac{1}{(T+b-2)(T+b-1)}
		\sum_{k=0}^{T-1}1
		=
		\frac{qT}{\tau^2(T+b-2)(T+b-1)}.
	\end{align*}
	Combining the two bounds proves the first claim. The second claim follows from
	$b\ge 3$ and elementary comparisons between $T+b-2$ and $T+b$.
\end{proof}
 \begin{lemma}[{\cite[Lemma~1]{chung1954stochastic}}]
\label{lem:chung_harmonic_general}
Let $\{a_t\}_{t\ge0}$ be a sequence of nonnegative real numbers. Suppose that, for some
$c>0$, $r>0$, $C>0$, and $b\in\mathcal N_+$, it holds that
\[
a_{t+1}
\le
\left(1-\frac{c}{t+b}\right)a_t
+
\frac{C}{(t+b)^{r+1}},
\qquad \forall t\ge0,
\]
where $c>r$. Then,
\[
a_t\leq\mO(t^{-r}).
\] 
\end{lemma}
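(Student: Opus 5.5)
This is Chung's classical lemma, and I would prove it by a direct induction on the weighted quantity $(t+b)^r a_t$. The aim is to show that there is a constant $K$ with $a_t \le K (t+b)^{-r}$ for all $t\ge 0$; since $(t+b)^{-r}\le t^{-r}$, this gives $a_t=\mO(t^{-r})$ for $t\ge1$.

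\textbf{Step 1 (inductive step).} Assume $a_t\le K(t+b)^{-r}$ and substitute this into the recursion. When $t+b>c$ the coefficient $1-\frac{c}{t+b}$ is nonnegative, so
\[
a_{t+1}\le K(t+b)^{-r}\Bigl(1-\tfrac{c}{t+b}\Bigr)+C(t+b)^{-r-1}.
\]
We need the right-hand side to be at most $K(t+b+1)^{-r}$. Writing $n=t+b$ and using $(1+1/n)^{-r}\ge 1-\frac{r}{n}$ (Bernoulli/convexity), it suffices to show
\[
K\Bigl(1-\tfrac{c}{n}\Bigr)+\tfrac{C}{n}\le K\Bigl(1-\tfrac{r}{n}\Bigr),
\qquad\text{that is,}\qquad K(c-r)\ge C.
\]
This holds once $K\ge C/(c-r)$, and this is exactly where the hypothesis $c>r$ is used.

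\textbf{Step 2 (initial segment).} Let $t_0$ be the first index with $t_0+b>c$. For $t<t_0$ the contraction factor $1-\frac{c}{t+b}$ may be negative. Since $a_t\ge0$, each step still gives $a_{t+1}\le \bigl|1-\tfrac{c}{t+b}\bigr|\,a_t+C(t+b)^{-r-1}$. Over these finitely many steps $a_{t_0}$ is therefore bounded by a finite constant depending only on $a_0,c,C,b$. (If the factor is negative, the recursion forces $a_{t+1}\le C(t+b)^{-r-1}$ directly, which is fine too.)

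\textbf{Step 3 (choice of $K$).} Take
\[
K=\max\Bigl\{\tfrac{C}{c-r},\ \max_{t\le t_0}(t+b)^r a_t\Bigr\}.
\]
The induction base then holds at $t_0$, and Step 1 propagates the bound to all $t\ge t_0$.

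The only step needing any care is the inductive inequality in Step 1, specifically making sure the direction of the Bernoulli estimate is the right one to absorb the shift from $n$ to $n+1$. Everything else is bookkeeping of constants.
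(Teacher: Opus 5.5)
Your argument is correct and complete. The convexity bound $(1+1/n)^{-r}\ge 1-r/n$ points the right way. Restricting the inductive step to $t\ge t_0$ guarantees the factor $1-\frac{c}{t+b}$ is nonnegative, so the hypothesis can be multiplied through. Nonnegativity of $a_t$ handles the finitely many initial steps.

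The paper gives no proof of this lemma; it imports it from Chung (1954, Lemma~1). Chung's original form gives the sharper asymptotic conclusion $a_t\le \frac{C}{c-r}\,t^{-r}+o(t^{-r})$, i.e.\ $\limsup_{t\to\infty}t^{r}a_t\le C/(c-r)$, and the leading constant does not depend on the initial data.

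Your direct induction on $(t+b)^r a_t$ gives up that asymptotic sharpness. In exchange it yields an elementary, explicit bound $a_t\le K(t+b)^{-r}$ valid for every $t\ge 0$, where $K=\max\{C/(c-r),\max_{t\le t_0}(t+b)^r a_t\}$ depends on the early iterates. This uniform-in-$t$ form directly delivers the ``for all $t\ge1$'' statements the paper draws from this lemma in Lemmas~\ref{lem:det_ref_uncon} and~\ref{lem:stoch_track_uncon}. It also makes the role of $c>r$ transparent, as the single condition $K(c-r)\ge C$.
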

 \begin{lemma}[{\cite[Lemmas~3,4]{chung1954stochastic}}]
\label{lem:chung}
Let $\{a_t\}_{t\in\mathbb{N}}$ be a sequence of real numbers, and let $b\in\mathbb{N}_+$ be such that, for all $t\in\mathbb{N}$,
\begin{equation*} 
a_{t+1}\leq 
\left(1 - \frac{q}{(t+b)^{\nu}}\right) a_t +
\frac{q'}{(t+b)^{r+\nu}},
\end{equation*}
where $0 < \nu < 1$ and $r,q,q' > 0$.  
Then,
\[
a_t=\mO(t^{-r}) .
\]
\end{lemma}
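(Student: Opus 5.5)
The plan is to prove, by induction from some finite index onward, that $a_t\le K(t+b)^{-r}$ for a suitably large constant $K$. Since $(t+b)^{-r}\le t^{-r}$ for $t\ge1$, this gives $a_t=\mO(t^{-r})$. Write $s\coloneqq t+b$ throughout.

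\textbf{Step 1 (choose the starting index $s_0$).} Since $0<\nu<1$, we have $s^{-\nu}\to0$ and $r s^{\nu-1}\to 0$ as $s\to\infty$. Choose $s_0\ge b$ so that, for all $s\ge s_0$,
\[
q s^{-\nu}\le 1,
\qquad
r s^{\nu-1}\le \tfrac q2,
\qquad
(s+1)^{-r}\ge s^{-r}\Bigl(1-\tfrac rs\Bigr).
\]
The third condition follows from convexity of $x\mapsto x^{-r}$. Indeed, $(s+1)^{-r}\ge s^{-r}-r s^{-r-1}$ holds for every $s>0$, so only the first two conditions actually restrict $s_0$. Let $t_0\coloneqq s_0-b$, and set
\[
K\coloneqq\max\Bigl\{\tfrac{2q'}{q},\ a_{t_0}\,s_0^{\,r},\ 0\Bigr\}.
\]
This is a finite constant, since $a_{t_0}$ is a single real number. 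The base case $a_{t_0}\le K s_0^{-r}$ then holds by construction.

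\textbf{Step 2 (inductive step).} Suppose $a_t\le K s^{-r}$ for some $s=t+b\ge s_0$. The coefficient $1-q s^{-\nu}$ lies in $[0,1]$, so multiplying the hypothesis by it preserves the inequality. This is true even if $a_t$ is negative, which matters because the lemma allows real-valued sequences. We therefore get
\[
a_{t+1}\le (1-qs^{-\nu})K s^{-r}+q' s^{-r-\nu}.
\]
It suffices to show that the right-hand side is at most $K s^{-r}-Kr s^{-r-1}$, because the convexity bound then gives $K s^{-r}-Kr s^{-r-1}\le K(s+1)^{-r}$. Dividing by $s^{-r-\nu}$, the required inequality becomes
\[
K\bigl(q-r s^{\nu-1}\bigr)\ge q'.
\]
This holds because $q-rs^{\nu-1}\ge q/2$ for $s\ge s_0$, and $K\ge 2q'/q$.

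\textbf{Main obstacle.} The only delicate point is that the contraction $q s^{-\nu}$ must dominate the polynomial decay $r/s$ of the target envelope $s^{-r}$. This is exactly where $\nu<1$ is used, since it forces $s^{-\nu}\gg s^{-1}$. For $\nu=1$ one would need the harmonic-type condition $q>r$ of Lemma~\ref{lem:chung_harmonic_general}. The other point to get right is to start the induction only after the coefficient $1-qs^{-\nu}$ has become nonnegative, and to absorb the finitely many earlier terms into $K$ through $a_{t_0}$. Once these are in place, the remaining steps are routine.
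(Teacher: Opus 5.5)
Your proof is correct, but it takes a different route from the paper, which gives no argument at all and simply imports the result from \cite[Lemmas~3,4]{chung1954stochastic}. Your comparison-envelope induction checks out. For $s\ge s_0$ the needed inequality reduces to $K(q-rs^{\nu-1})\ge q'$, which $rs^{\nu-1}\le q/2$ and $K\ge 2q'/q$ guarantee. The tangent-line bound $(s+1)^{-r}\ge s^{-r}-rs^{-r-1}$ then closes the step. Multiplying by $1-qs^{-\nu}\in[0,1]$ is valid for real-valued, possibly negative, $a_t$, and your one-sided reading of $\mO$ matches the paper's notation.

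Your version buys a short, self-contained, elementary argument. Chung's version buys the sharp asymptotic constant $\limsup_{t\to\infty}t^{r}a_t\le q'/q$, which the paper actually quotes in the proof of Lemma~\ref{lem:chung_new}. Your constant $\max\{2q'/q,\,a_{t_0}s_0^{r}\}$ instead depends on the initial segment. This is harmless for the statement as written and for its downstream use, since Lemma~\ref{lem:chung_new} only concludes an $\mO$-rate.

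If you want the sharp form, your argument upgrades easily. Take $K'=q'/((1-\epsilon)q)$ and choose $s_0$ with $rs^{\nu-1}\le\epsilon q$. The same computation gives $d_{t+1}\le(1-qs^{-\nu})d_t$ for $d_t\coloneqq a_t-K'(t+b)^{-r}$. The positive part of $d_t$ then decays like $\exp(-c\,t^{1-\nu})=o(t^{-r})$, so $\limsup_{t\to\infty} t^{r}a_t\le K'$ for every $\epsilon>0$.

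Finally, if you want the bound for all $t\ge1$ rather than only asymptotically, absorb the finitely many indices $1\le t<t_0$ by enlarging the constant.
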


\begin{lemma}\label{lem:chung_new}
Let $\{a_t\}_{t\in\mathbb{N}}$ be a sequence of real positive numbers, and let $b\in\mathbb{N}_+$. Suppose the following bounds holds for all $t\in\mathbb{N}$, 
\begin{equation*} 
a_{t+1}\leq 
\left(1 - \frac{c}{(t+b)^{p}}\right) a_t +
\frac{C_1}{(t+b)^{q_1}}+\frac{C_2}{(t+b)^{q_2}},
\end{equation*}
where $0 < p < 1$, $q_1,q_2>p$, and $c,C_1, C_2 > 0$.  
Then,
\[
a_t\leq \mO(t^{-\left(\min\{q_1,q_2\}-p\right)}).
\]
\end{lemma}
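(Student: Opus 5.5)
The statement to prove is Lemma~\ref{lem:chung_new}: a recursion $a_{t+1}\le(1-c(t+b)^{-p})a_t+C_1(t+b)^{-q_1}+C_2(t+b)^{-q_2}$ with $0<p<1$ and $q_1,q_2>p$. I will reduce it to Lemma~\ref{lem:chung}.

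Set $q\coloneqq\min\{q_1,q_2\}$. Since $b\ge1$, we have $(t+b)^{-q_i}\le (t+b)^{-q}$ for $i=1,2$. Hence the two inhomogeneous terms combine into a single term:
\[
a_{t+1}\le\left(1-\frac{c}{(t+b)^{p}}\right)a_t+\frac{C_1+C_2}{(t+b)^{q}}.
\]
Writing $q=r+p$ with $r\coloneqq q-p>0$, this is exactly the hypothesis of Lemma~\ref{lem:chung} with $\nu=p\in(0,1)$, contraction constant $c$, and $q'=C_1+C_2$. That lemma gives $a_t=\mO(t^{-r})=\mO(t^{-(\min\{q_1,q_2\}-p)})$, which is the claim.

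The one point needing care is that Lemma~\ref{lem:chung} (Chung's lemma) implicitly requires the contraction factor to be nonnegative, i.e.\ $c(t+b)^{-p}\le1$. Otherwise the multiplier $1-c(t+b)^{-p}$ is negative and the unrolled bound breaks. This can fail only for finitely many $t$, since $(t+b)^{-p}\to0$.

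I would handle those early steps in one of two ways. First, one can note that in every application in the paper this inequality is assumed outright (e.g.\ $\eta_0\tau_0(t+b)^{-\alpha-\beta}\le1$). Alternatively, one can start the argument at some $t_0$ with $c(t_0+b)^{-p}\le1$: for $t<t_0$ the terms $a_t$ are finite, because each step gives $a_{t+1}\le|1-c(t+b)^{-p}|a_t+(C_1+C_2)$. Shifting the index by $t_0$ changes only constants.

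If a self-contained argument is preferred over citing Chung, I would verify by induction that $a_t\le K(t+b)^{-r}$ for large $K$. This uses $(t+b+1)^{-r}\ge(t+b)^{-r}(1-r/(t+b))$ and the fact that $r/(t+b)=o((t+b)^{-p})$ because $p<1$. So the contraction $c(t+b)^{-p}$ eventually dominates, and the needed inequality reduces to $K(c-o(1))\ge C_1+C_2$. This domination of $r/(t+b)$ by $c(t+b)^{-p}$ is the only step with real content.
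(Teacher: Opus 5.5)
Your proof is correct and, like the paper's, reduces to Lemma~\ref{lem:chung}. The only difference is that you merge the two forcing terms via $(t+b)^{-q_i}\le (t+b)^{-\min\{q_1,q_2\}}$ (valid since $t+b\ge 1$), whereas the paper runs two auxiliary sequences $u_t^{(1)},u_t^{(2)}$ and uses the bound $a_t\le u_t^{(1)}+u_t^{(2)}$, so your version is slightly simpler. Your caveat that $1-c(t+b)^{-p}$ must be nonnegative, handled by shifting the start index, is well taken, since the paper's induction $a_t\le u_t^{(1)}+u_t^{(2)}$ also tacitly needs it.
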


\begin{proof}
Let
\[
r_1\coloneqq q_1-p,
\qquad
r_2\coloneqq q_2-p.
\]
Since $q_1,q_2>p$, we have $r_1,r_2>0$.

Fix $i\in\{1,2\}$. Define the auxiliary sequence
$\{u_t^{(i)}\}_{t\ge 1}$ by
\[
u^{(i)}_{1}=a_{1},
\qquad
u^{(i)}_{t+1}
=
\left(1-\frac{c}{(t+b)^p}\right)u^{(i)}_{t}
+
\frac{C_i}{(t+b)^{r_i+p}}.
\]
By induction on $t$, the recursion satisfied by $\{a_t\}$ implies
\[
a_t \le u_t^{(1)}+u_t^{(2)},
\qquad \forall t\ge 1.
\]
Applying Lemma~\ref{lem:chung} with
\[
\nu=p,\qquad q=c,\qquad q'=C_i,\qquad r=r_i,
\]
yields
\[
\limsup_{t\to\infty} t^{r_i}u_t^{(i)}
\le
\frac{C_i}{c},
\qquad i=1,2.
\]
Hence, for every $\varepsilon>0$, there exists $T_\varepsilon$ such that
\[
u_t^{(i)}
\le
\frac{C_i/c+\varepsilon}{t^{r_i}},
\qquad
\forall t\ge T_\varepsilon,
\]
for $i=1,2$.

Combining the above estimates with
$a_t \le u_t^{(1)}+u_t^{(2)}$, we obtain
\[
a_t
\le
\frac{C_1/c+\varepsilon}{t^{q_1-p}}
+
\frac{C_2/c+\varepsilon}{t^{q_2-p}},
\qquad
\forall t\ge T_\varepsilon.
\]
Since $\varepsilon>0$ is arbitrary, it follows that
\[
a_t
=
\mathcal{O} \left(t^{-(q_1-p)}\right)
+
\mathcal{O} \left(t^{-(q_2-p)}\right).
\]

Equivalently,
\[
a_t
=
\mathcal{O} \left(t^{-(\min\{q_1,q_2\}-p)}\right).
\]
This completes the proof.
\end{proof}

\section{Proof of Lemma~\ref{lem:per_sol}}

\begin{proof}[Proof of Lemma~\ref{lem:per_sol}] (i) By the optimality condition of the original problem, we have
\[
    \langle V(\u^\star),\u_\tau^\star-\u^\star\rangle \ge 0.
\]
Similarly, the optimality condition of the perturbed problem gives
\[
    \langle V(\u_\tau^\star)+\tau \u_\tau^\star,
    \u^\star-\u_\tau^\star\rangle \ge 0.
\]
Adding the two inequalities yields
\[
    \tau \langle \u_\tau^\star,\u_\tau^\star-\u^\star\rangle
    \le
    \langle V(\u^\star)-V(\u_\tau^\star),\u_\tau^\star-\u^\star\rangle.
\]
By the monotonicity of $V$ (Assumption~\ref{ass:cc}), the right-hand side is non-positive, hence
\[
    \langle \u_\tau^\star,\u_\tau^\star-\u^\star\rangle \le 0.
\]
Expanding this gives
\[
    \|\u_\tau^\star\|^2 \le \langle \u_\tau^\star,\u^\star\rangle.
\]
Applying Cauchy--Schwarz yields
\[
    \|\u_\tau^\star\|^2 \le \|\u_\tau^\star\|\,\|\u^\star\|,
\]
which implies $\|\u_\tau^\star\|\le \|\u^\star\|$ if $\u_\tau^\star\neq 0$, and the result is trivial otherwise.
	(ii)  
By the optimality condition of $\u_\tau^\star$, we have 
\[
\langle V(\u_\tau^\star)+\tau \u_\tau^\star, \u_\tau^\star-\u_{\tau'}^\star\rangle \le 0.
\] 
Similarly, by the optimality condition of $\u_{\tau'}^\star$, we have
\[
\langle V(\u_{\tau'}^\star)+\tau' \u_{\tau'}^\star, \u_{\tau}^\star-\u_{\tau'}^\star\rangle \ge 0.
\] 
Combining these two inequalities gives
\begin{align*}
    &\langle V(\u_{\tau'}^\star)+\tau' \u_{\tau'}^\star-\left(  V(\u_\tau^\star)+\tau \u_\tau^\star\right), \u_{\tau}^\star-\u_{\tau'}^\star\rangle \ge 0\\
    \Leftrightarrow\ & \langle \tau' \u_{\tau'}^\star  -\tau \u_\tau^\star , \u_{\tau}^\star-\u_{\tau'}^\star\rangle \ge \langle V(\u_{\tau'}^\star)-V(\u_\tau^\star) ,\u_{\tau'}^\star-\u_{\tau}^\star\rangle.
\end{align*}
Using Assumption~\ref{ass:cc}, we get the right-hand side is non-negative. Therefore, we have 
\begin{align*}
    &\langle \tau' \u_{\tau'}^\star  -\tau \u_\tau^\star , \u_{\tau}^\star-\u_{\tau'}^\star\rangle\geq 0 \\
    \Leftrightarrow\ &  \langle  -\left(\tau-\tau'\right)\u_{\tau'}^\star-\tau \left(\u_{\tau}^\star-\u_{\tau'}^\star\right)    , \u_{\tau}^\star-\u_{\tau'}^\star\rangle\geq 0\\
     \Leftrightarrow\ & -\left(\tau-\tau'\right) \langle \u_{\tau'}^\star, \u_{\tau}^\star-\u_{\tau'}^\star\rangle\geq \tau \|\u_{\tau}^\star-\u_{\tau'}^\star\|^2.
\end{align*} 
By Cauchy-Schwarz inequality, we further get 
\[
\tau  \|\u_{\tau}^\star-\u_{\tau'}^\star\|^2 \leq |\tau-\tau'| \|\u_{\tau'}^\star\|\| \u_{\tau}^\star-\u_{\tau'}^\star\|.
\]
If $\|\u_{\tau}^\star-\u_{\tau'}^\star\|=0$, the desired inequality is trivial. Otherwise, dividing both sides by $\tau  \|\u_{\tau}^\star-\u_{\tau'}^\star\| $ yields
\[
 \|\u_{\tau}^\star-\u_{\tau'}^\star\|\le
\frac{|\tau-\tau'|}{\tau}\|\u_{\tau'}^\star\|.
\] 
This finishes the proof. 
\end{proof}

\section{Proof of Lemma~\ref{lem:bd_iter}} 
\begin{proof}[Proof of Lemma~\ref{lem:bd_iter}]
We present a unified proof for both the horizon-dependent and the anytime settings, as well as for PS-EG and PS-OGDA.
For convenience, define
\[
\widehat{\u}_t^\star=
\begin{cases}
\u_\tau^\star, & \text{for the horizon-dependent setting},\\[1mm]
\u_{\tau_t}^\star, & \text{for the anytime setting}.
\end{cases}
\]
We first observe that all parameter choices considered in
Theorems~\ref{thm:rate_L2} and~\ref{thm:rate_any} satisfy $\sum_{t=0}^{\infty}\eta_t^2<\infty.$ 
Moreover, in the anytime setting,
\[
\sum_{t=0}^{\infty}
\frac{(\tau_t-\tau_{t+1})^2}
{\eta_t\tau_t^3}
<\infty,
\qquad
\sum_{t=0}^{\infty}
(\tau_t-\tau_{t+1})^2\eta_t^2
<\infty.
\]

We next establish the boundedness with respect to the perturbed saddle point.

For PS-EG, from equations~\eqref{eq:eg_1} and~\eqref{eq:eg_recur}, we have
\[
\E[\|\u_{t+1}-\widehat{\u}_{\tau_{t+1}}^\star\|^2]
\le
\E[\|\u_t-\widehat{\u}_{\tau_t}^\star\|^2]
+r_t,
\]
where
\[
r_t=
\begin{cases}
6\eta_t^2\sigma^2,
& \text{(horizon-dependent)},\\[1mm]
12\eta_t^2\sigma^2
+
\dfrac{5(\tau_t-\tau_{t+1})^2}
{\eta_t\tau_t^3}\|\u^\star\|^2,
& \text{(anytime)}.
\end{cases}
\]

For PS-OGDA, the equations~\eqref{eq:og_1} and~\eqref{eq:og_recur} imply
\[
\Psi_{t+1}
\le
\Psi_t+\widetilde r_t,
\]
where
\[
\Psi_t\coloneqq
 \E[ \|\u_{t}-\u_{\tau_t}^\star\|^2]+ 3\eta_{t-1}^2 \E [\| W(\u_{t-\frac{1}{2}})-W(\u_{t-\frac{3}{2}})  \|^2]+\frac{12\left(\tau_{t}-\tau_{t-1}\right)^2\eta_{t}^2}{1-\frac{\eta_t\tau_t}{2}} \E[\|\u_{t-\frac{1}{2}}\|^2]
\]
and
\[
\widetilde r_t=
\begin{cases}
\left(144\eta_{t-1}^2\left(L+\tau\right)^2+6\right)\eta_t^2\sigma^2,
& \text{(horizon-dependent)},\\[2mm]
16\eta_t^2\sigma^2
+
\dfrac{5(\tau_t-\tau_{t+1})^2}
{\eta_t\tau_t^3}\|\u^\star\|^2
+
96(\tau_{t+1}-\tau_t)^2\eta_{t+1}^2
\|\u^\star\|^2,
& \text{(anytime)}.
\end{cases}
\]

Since all error sequences are summable, there exists a constant $C>0$ such that $\sum_t r_t<  C $ and $\sum_t \widetilde r_t<C$.  By applying a discrete Gr\"onwall argument, we obtain  $\sup_{t\geq 0}  \E [\|\u_{t}-\u_{\tau_t}^\star\|^2]\leq\E[\|\u_0-\u_{\tau_0}^\star\|^2]+C$. Thus
   \begin{align*}
       \sup_{t\geq 0}  \E [\|\u_{t}-\u^\star\|^2] \leq\ & \sup_{t\geq 0}  2\E [\|\u_{t}-\u_{\tau_t}^\star\|^2]+2\|\u_{\tau_t}^\star-\u^\star\|^2\\
       \leq\ & 2\E[\|\u_0-\u_{\tau_0}^\star\|^2]+2C+2\|\u_{\tau_t}^\star-\u^\star\|^2 .
   \end{align*}
By applying Young's inequality and the triangular inequality, we can bound the terms involving $\u_{\tau_t}^\star$ as follows:
\begin{align*}
2\mathbb{E}[\|\u_0-\u_{\tau_0}^\star\|^2] + 2\|\u_{\tau_t}^\star-\u^\star\|^2 
&\leq 4\mathbb{E}[\|\u_0-\u^\star\|^2] + 4\|\u_{\tau_0}^\star-\u^\star\|^2+2\|\u_{\tau_t}^\star-\u^\star\|^2  \\
&\leq 4\mathbb{E}[\|\u_0-\u^\star\|^2] + 8\|\u_{\tau_0}^\star\|^2+4\|\u_{\tau_t}^\star\|^2 + 12\|\u^\star\|^2.
\end{align*}
By Lemma~\ref{lem:per_sol}, we have $\|\u_{\tau_t}^\star\| \leq \|\u^\star\|$ for all $t\geq 0$ and noting that $\|\u^\star\|^2 \leq 2\|\u_0-\u^\star\|^2 + 2\|\u_0\|^2$, the total expression is bounded by:
\begin{align*}
 \sup_{t\geq 0}  \E [\|\u_{t}-\u^\star\|^2] \leq\ &2\E[\|\u_0-\u_{\tau_0}^\star\|^2]+2C+2\|\u_{\tau_t}^\star-\u^\star\|^2 \\
\leq\ & 52\E[\|\u_0-\u^\star\|^2]+48\|\u_0\|^2+2C.
\end{align*}
This completes the proof.
\end{proof}

\end{document}